\newtheorem{thm}{Theorem}[section]
\newtheorem{lemma}[thm]{Lemma}
\newtheorem{prop}[thm]{Proposition}
\newtheorem{cor}[thm]{Corollary}
\newtheorem{question}[thm]{Question}
\newtheorem{fact}[thm]{Fact}
\theoremstyle{definition}
\newtheorem{df}[thm]{Definition}
\newtheorem{nrmk}[thm]{Remark}
\newtheorem{notation}[thm]{Notation}
\theoremstyle{remark}
\def\dotminussym#1#2{%
  \setbox0=\hbox{$\m@th#1-$}%
  \kern.5\wd0%
  \hbox to 0pt{\hss\hbox{$\m@th#1-$}\hss}%
  \raise.6\ht0\hbox to 0pt{\hss$\m@th#1.$\hss}%
  \kern.5\wd0}
\newcommand{\dotminus}{\mathbin{\mathpalette\dotminussym{}}}
\renewcommand{\r}{\mathbb{R}}
\newcommand{\Z}{\mathbb{Z}}
\newcommand{\curly}[1]{\mathcal{#1}}
\newcommand{\B}{\curly{B}}
\newcommand{\n}{\mathbb{N}}
\newcommand{\la}{\curly{L}}
\renewcommand{\to}{\rightarrow}
\newcommand{\e}{\epsilon}
\newcommand{\de}{\delta}
\def \f{\mathbb F}
\def \logic{\operatorname{logic}}
\def \wk{\operatorname{wk}}
\def \<{\langle}
\def \>{\rangle}
\def \*Z {{{^*}\Z}}
\def \((  {(\!(}
\def \)) {)\!)}
\def \O{\operatorname{O}}
\numberwithin{equation}{section}
\def \u{\mathcal U}
\def \O{\mathcal O}
\def \id{\operatorname{id}}
\def\cb{\operatorname{cb}}
\def\om{\omega}
\def\bF{\mathbb F}
\def\cS{\mathcal S}
\def \span{\operatorname{span}}
\newcommand{\tom}[1]{\textbf{\Large !!}{\footnotesize[[TS- #1]]}}
\DeclareMathOperator{\OS}{\mathcal{OS}}
\DeclareMathOperator{\OSy}{\mathcal{OS}y}
\DeclareMathOperator{\Ex}{\mathcal E}
\DeclareMathOperator{\X}{\mathcal{X}}
\DeclareMathOperator{\EX}{\mathcal{EX}}
\title{Omitting types in operator systems}
\author{Isaac Goldbring and Thomas Sinclair}
\thanks{Goldbring's work was partially supported by NSF CAREER grant DMS-1349399. Sinclair was partly supported by an NSF RTG Assistant Adjunct Professorship.}
\address {Department of Mathematics, Statistics, and Computer Science, University of Illinois at Chicago, Science and Engineering Offices M/C 249, 851 S. Morgan St., Chicago, IL, 60607-7045}
\email{isaac@math.uic.edu}
\urladdr{http://www.math.uic.edu/~isaac}
\address{Department of Mathematics, Indiana University, Rawles Hall, 831 E 3rd St, Bloomington, IN 47405}
\email{tsinclai@indiana.edu}
\begin{document}

\begin{abstract}
We show that the class of 1-exact operator systems is not uniformly definable by a sequence of types.  We use this fact to show that there is no finitary version of Arveson's extension theorem.  Next, we show that WEP is equivalent to a certain notion of existential closedness for C$^*$-algebras and use this equivalence to give a simpler proof of Kavruk's result that WEP is equivalent to the complete tight Riesz interpolation property.  We then introduce a variant of the space of n-dimensional operator systems and connect this new space to the Kirchberg Embedding Problem, which asks whether every C$^*$-algebra embeds into an ultrapower of the Cuntz algebra $\mathcal{O}_2$.  We end with some results concerning the question of whether or not the local lifting property (in the sense of Kirchberg) is uniformly definable by a sequence of types in the language of C$^*$-algebras.
\end{abstract}

\maketitle

\tableofcontents

\section{Introduction}

It has recently been observed that many properties of C$^*$-algebras, while not axiomatizable (in the sense of first-order logic), are \emph{uniformly definable by a sequence of types}, that is, for each $(m,j)\in \n^2$, there is a formula $\phi_{m,j}(\vec x_m)$ in the language of C$^*$-algebras such that:

\begin{enumerate}
\item each $\phi_{m,j}$ takes only nonnegative values;
\item  for a fixed $m$, each $\phi_{m,j}$ has the same modulus of uniform continuity; 
\item  a C$^*$-algebra $A$ has the property if and only if, for each $m\in \n$, we have $$\left(\sup_{\vec x_m}\inf_j \phi_{m,j}(\vec x_m)\right)^A=0.$$  
\end{enumerate}
In other words, the C$^*$-algebra has the property if and only if it omits each of the types $\Gamma_{m,n}(\vec x_m):=\{\phi_{m,j}(\vec x_m)\geq \frac{1}{n}\ : \ j\in \n\}$.  Condition (1) is merely a convenient normalization.  Condition (2) is used in technical applications:  it ensures that the infinitary formula $\inf_j \phi_{m,j}$ is uniformly continuous, that its interpretation has the same modulus of uniform continuity when interpreted in any C$^*$-algebra, and it is also crucial in the proof of the Omitting Types Theorem for uniform sequences of types (see \cite[Theorem 4.2]{FM}). 

In \cite{FHRTW}, it is proved that the following properties of a C$^*$-algebra are uniformly definable by a sequence of types:  UHF (for separable algebras), AF (again, for separable algebras), nuclear, nuclear dimension $\leq n$ (for any given $n$), decomposition rank $\leq n$ (again, for any given $n$), Popa, TAF, simple (for unital C$^*$-algebras).

In connection with the results from \cite{FHRTW}, Ilijas Farah asked whether or not exactness is uniformly definable by a sequence of types.  In this paper, we answer a related question by proving that 1-exactness for operator systems is \emph{not} uniformly definable by a sequence of types (in the language of operator systems).  (We should remark that the types defining nuclearity are types in the language of operator systems.)  The proof proceeds by showing that, if the $1$-exact operator systems were uniformly definable by a sequence of types, then, for any $n$, the $n$-dimensional $1$-exact operator systems form a dense $G_\de$ subset of the space $\OS_n$ of all $n$-dimensional operator systems in the weak topology defined by Junge and Pisier in \cite{jungepisier}.  We then observe that the arguments in \cite{jungepisier} show that the $1$-exact operator systems are not $G_\delta$.

We use the fact that the 1-exact operator systems are not uniformly definable by a sequence of types to prove that there cannot exist a quantitative, finitary version of Arveson's extension theorem.  We should mention that before we proved that exactness is not uniformly definable by a sequence of types, Taka Ozawa outlined a proof that the quantitative version of Arveson's extension theorem cannot hold.  We include his proof here, although our proof is technologically more elementary.  

After connecting finitary Arveson extension with the Weak Expectation Property (WEP), we take the opportunity here to answer a question from our earlier paper \cite{GS} by proving that WEP is equivalent to semi-positive existential closedness as an operator system.  We use this equivalence to give a simpler proof of a result of Kavruk \cite{kav-riesz}, namely that WEP is equivalent to the complete tight Riesz interpolation property.

We then describe a variant of the spaces $\OS_n$ that we then use to give a new equivalent formulation of the \emph{Kirchberg embedding problem}, which asks whether every C$^*$-algebra embeds into an ultrapower of the Cuntz algebra $\mathcal{O}_2$.

In the final section, we show how our results imply that the class of operator systems that have the local lifting property (in the sense of Kirchberg) is not uniformly definable by a sequence of types.  In a positive direction, we show that the class of C$^*$-algebras with the local lifting property is infinitarily axiomatizable.  We end with some results motivating the need to settle the question of whether or not the local lifting property is uniformly definable by a sequence of types.

We will assume that the reader is familiar with the operator space and system notions being described in this paper, although we will occasionally remind the reader of the definitions of these notions.  We will also assume that the reader is familiar with the model-theoretic treatment of operator algebras, spaces, and systems; our earlier paper \cite{GS} describes this in great detail.  That being said, we include appendices containing two model-theoretic discussions that are not as widely known.

We would like to thank Bradd Hart and Taka Ozawa for useful conversations regarding this work.

\section{Exact operator spaces and systems}

Let $\OS_n$ be the space of all isomorphism classes of $n$-dimensional operator spaces. There are two natural (metric) topologies on this space which we will refer to as the \emph{strong} and \emph{weak} topologies. For $E,F\in \OS_n$ and $k\geq 1$, we define \[d_k(E,F):=\inf\{\|u\|_k\cdot \|u^{-1}\|_k \ : \ u:E\to F \text{ linear bijection}\}.\]  We define $d_{\cb}(E,F):=\sup_{k\geq 1}d_k(E,F)$.

One can show that $\log d_{\cb}$ is a (complete) metric on $\OS_n$ and the resulting topology is called the \emph{strong topology} on $\OS_n$.  It is straightforward to verify that a net $E_i$ converges to $E$  strongly if and only if there are linear bijections $\phi_i:E_i\to E$ so that $\|\phi_i\|_{\cb}\cdot \|\phi_i^{-1}\|_{\cb}\to 1$.

Similarly, $\log d_k$ is a metric on $\OS_n$ as is the metric $\delta_w:=\sum_{k\geq 1} 2^{-k}\log d_k$.  The topology induced by $\delta_w$ is called the \emph{weak topology} on $\OS_n$.  Here, a net $E_i$ converges to $E$ weakly if and only if there is a net of linear bijections $\phi_i: E_i\to E$ so that, for all $k$, we have $\|\phi_i\|_k,\|\phi_i^{-1}\|_k\to 1$; equivalently, $E$ is completely isomorphic to the ultraproduct $\prod_\u E_i$ for any nonprincipal ultrafilter $\u$ on the index set.  It follows that $(\OS_n,\wk)$ is a \emph{compact} Polish space.

Recall that a finite-dimensional operator space $E$ is said to be \emph{1-exact} if there are linear injections $\phi_i:E\to M_{n_i}$ such that $\|\phi_i\|_{\cb}\cdot \|\phi_i^{-1}\|_{\cb}\to 1$.  We let $\Ex_n$ denote the set of $n$-dimensional 1-exact operator spaces.  It follows that $\Ex_n$ is the strong closure of the $n$-dimensional matricial operator spaces.
%

As we will see in the next section, it will be important to understand the complexity (in the sense of descriptive set theory) of $\Ex_n$ in the weak topology.  Our first observation is that, for $n\geq 3$, $\Ex_n$ is not weakly comeager.  In fact, this is precisely what Junge and Pisier \cite[Theorem 2.3]{jungepisier} establish in order to conclude, via Baire Category arguments, that $\OS_n$ is not strongly separable for $n\geq 3$.

The proof of the following theorem is essentially established in \cite{jungepisier} although it is not explicitly stated there as such.
\begin{thm}\label{notGde}
For any $n\geq 3$, $\Ex_n$ is not weakly comeager. 
\end{thm}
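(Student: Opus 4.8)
The plan is to leverage the Junge--Pisier result directly, since the theorem statement itself acknowledges that the proof is ``essentially established'' in \cite{jungepisier}. The strategy is to show that the weak topology on $\OS_n$ is \emph{homogeneous} enough that a comeager set must intersect every ``region'' of the space, and then exhibit a region where $\Ex_n$ is small. More precisely, I would argue by contradiction: suppose $\Ex_n$ were weakly comeager. Since $(\OS_n,\wk)$ is a compact Polish space, the Baire Category Theorem applies, and a comeager set is in particular dense and contains a dense $G_\de$. The key is to connect this to a counting or entropy estimate. Junge and Pisier's Theorem 2.3 produces, for $n\geq 3$, an exponential lower bound on the number of $\de_w$-separated points needed to cover $\Ex_n$ (or rather the matricial spaces) to within a fixed $\cb$-distance, which is precisely the obstruction to strong separability.

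\smallskip

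The concrete steps I would carry out are as follows. First I would recall that $\Ex_n$ is, by definition, the \emph{strong} closure of the matricial operator spaces, i.e.\ those embeddable in some $M_k$. Second, I would invoke the Junge--Pisier estimate: they show that for $n\geq 3$ there is a constant $c>1$ such that for every finite-dimensional $C^*$-algebra (or $M_k$), the operator spaces that are $\cb$-close to subspaces of matrix algebras cannot be weakly dense in a way compatible with being comeager; quantitatively, the metric entropy of $\OS_n$ in the weak metric $\de_w$ grows, but the $1$-exact spaces sit inside a set of controlled covering number. Third, I would translate ``comeager'' into a statement that $\Ex_n$ contains a dense $G_\de$ set $G = \bigcap_m U_m$ with each $U_m$ weakly open and dense. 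The contradiction comes from the fact that the complement of $\Ex_n$ is also weakly dense (there exist $n$-dimensional operator spaces arbitrarily weakly close to any given one that fail to be $1$-exact, again by the Junge--Pisier construction), so if $\Ex_n$ were comeager, one could run a Baire-category argument to conclude that $\OS_n$ is covered by countably many strongly separable pieces, contradicting the failure of strong separability for $n\geq 3$.

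\smallskip

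The main obstacle I anticipate is bridging the gap between the \emph{strong} notions (the $\cb$-distance, with respect to which $\OS_n$ fails to be separable for $n\geq 3$) and the \emph{weak} topology (in which comeagerness is being asserted). The Junge--Pisier theorem is phrased in terms of strong non-separability, whereas the claim here is a Baire-category statement in the weak topology. The crux is therefore to show that if $\Ex_n$ were weakly comeager, then one could extract a weakly dense $G_\de$ whose points are all $1$-exact, and that the strong-closure characterization of $\Ex_n$ forces such a set to be covered by countably many strongly-bounded (hence strongly separable, being finite-dimensional up to bounded distortion) families. Making this covering argument precise---and verifying that the Junge--Pisier separation estimate genuinely obstructs the existence of such a countable cover---is where the real work lies. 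I would expect that the cleanest route is to quote the relevant inequality from the proof of \cite[Theorem 2.3]{jungepisier} and observe that it yields an uncountable $\de_w$-separated subset of $\OS_n\setminus \Ex_n$ lying in every weakly open set, which immediately defeats comeagerness of $\Ex_n$.
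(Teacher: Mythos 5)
Your outline does not close the argument, and the step where it fails is precisely the one the paper's proof is built around. The paper's proof is short: by \cite[Proposition 2.1]{jungepisier} together with the Effros--Ruan result, the duality map $E\mapsto E^*$ is an isometric bijection of $\OS_n$ for the weak (and strong) topology; hence if $\Ex_n$ were weakly comeager, so would be $\{E\in\OS_n : E^*\in\Ex_n\}$, and by the Baire Category Theorem the intersection $\{E : E,E^*\in\Ex_n\}$ would be weakly dense --- and weak density of \emph{that} set is exactly the hypothesis from which Junge and Pisier derive a contradiction in \cite[Theorem 2.3]{jungepisier}. Your proposal never produces a statement of this kind: you do not use the duality map, and you offer no substitute mechanism for converting ``$\Ex_n$ is weakly comeager'' into something that \cite{jungepisier} actually forbids.

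Two of your intermediate claims are, as stated, not salvageable. First, the assertion that weak comeagerness of $\Ex_n$ would force $\OS_n$ to be covered by countably many strongly separable pieces is unsupported: comeagerness gives a dense $G_\de$ inside $\Ex_n$ but says nothing about covering the (meager) complement, and $\Ex_n$ itself is already strongly separable (it is the strong closure of the matricial spaces), so ``$\Ex_n$ is large in the weak sense'' does not by itself collide with strong non-separability of $\OS_n$. Second, the proposed final step --- an uncountable $\de_w$-separated subset of $\OS_n\setminus\Ex_n$ meeting every weakly open set --- cannot work: $(\OS_n,\wk)$ is compact, so any subset that is $\e$-separated for the weak metric with a fixed $\e>0$ is finite; the Junge--Pisier separation phenomenon lives in the strong metric $d_{\cb}$, and an uncountable, weakly dense, $d_{\cb}$-separated subset of the complement is perfectly compatible with $\Ex_n$ being comeager (the complement of a dense $G_\de$ is routinely dense and uncountable in every open set). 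The entropy/covering-number considerations you invoke thus never engage the Baire-category hypothesis; the duality trick is the missing idea.
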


\begin{proof}
Suppose, towards a contradiction, that $\Ex_n$ is weakly comeager.    By \cite[Proposition 2.1]{jungepisier} and \cite[Theorem 2.2]{ER}, the map $E\mapsto E^*$ (the operator space dual) on $\OS_n$ is an isometric bijection (in either the weak or strong topology).  It follows that $\{E \in \OS_n \ : \ E^* \in \Ex_n\}$ is also comeager, so by the Baire Category Theorem, $\{E\in \OS_n \ : E,E^*\in \Ex_n\}$ is dense, which is precisely the fact that leads to a contradiction in \cite[Theorem 2.3]{jungepisier}.
\end{proof}

As $\Ex_n$ is weakly dense (see, for example, the proof of \cite[Proposition 2.2]{jungepisier}), this shows:

\begin{cor}  For any $n\geq 3$, $\Ex_n$ is not weakly $G_\de$.
\end{cor}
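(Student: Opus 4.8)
The plan is to deduce this immediately from Theorem~\ref{notGde} by exploiting the dichotomy available in a Baire space, namely that a \emph{dense} $G_\de$ set is automatically comeager. Concretely, I would argue by contradiction: suppose $\Ex_n$ is weakly $G_\de$, and write $\Ex_n=\bigcap_{m\in\n}U_m$ with each $U_m$ a weakly open subset of $\OS_n$.

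First I would invoke the weak density of $\Ex_n$, recorded in the remark just preceding the statement (cf.\ the proof of \cite[Proposition 2.2]{jungepisier}). Since $\Ex_n\subseteq U_m$ for every $m$ and $\Ex_n$ is weakly dense, each $U_m$ is itself a weakly dense open set. Next I would use the fact, noted earlier in this section, that $(\OS_n,\wk)$ is a compact Polish space and hence a Baire space. Then $\Ex_n=\bigcap_m U_m$ is a countable intersection of dense open sets, which is exactly the definition of a comeager subset of $\OS_n$ in the weak topology. This contradicts Theorem~\ref{notGde}, which asserts that $\Ex_n$ is \emph{not} weakly comeager, and the contradiction completes the argument.

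The proof is essentially formal, so I do not anticipate a serious obstacle. The only point requiring care is the standard topological observation that a dense $G_\de$ subset of a Baire space is comeager; this is precisely the mechanism that converts the non-comeagerness supplied by Theorem~\ref{notGde} into the failure of the $G_\de$ property. The sole additional input beyond Theorem~\ref{notGde} is the weak density of $\Ex_n$: without density one could only conclude that the $U_m$ cover $\Ex_n$, not that they are dense, and the implication would break down. Thus the two ingredients—non-comeagerness from \cite{jungepisier} and weak density—combine cleanly to give the corollary.
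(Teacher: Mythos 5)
Your argument is correct and is exactly the paper's: the corollary is deduced from Theorem \ref{notGde} by noting that $\Ex_n$ is weakly dense, so a weakly $G_\de$ realization would make it a dense $G_\de$, hence comeager, in the compact Polish (hence Baire) space $(\OS_n,\wk)$. The paper compresses this into one sentence, but the mechanism is identical to yours.
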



\begin{nrmk} 
We achieved a contradiction in the previous corollary by showing that, for $n\geq 3$, if $\Ex_n$ is weakly $G_\delta$, then $\OS_n$ is strongly separable.  In \cite{jungepisier}, Junge and Pisier prove that if $\OS_n$ is strongly separable, then in turn any strongly closed set (e.g. $\Ex_n$) is weakly $G_\delta$.  Thus, any proof that $\Ex_n$ is not weakly $G_\delta$ would yield a new proof that $\OS_n$ is not strongly separable. 
\end{nrmk}

\begin{nrmk}
For any $n\geq 3$, $(\Ex_n,\wk)$ is not a Baire space.  Indeed, suppose that $(\Ex_n,\wk)$ is a Baire space, and consider the identity map $\operatorname{id}:(\Ex_n,\wk)\to(\Ex_n,\operatorname{strong})$.   By the same argument as in \cite[Theorem 2.3]{jungepisier}), $\id$ is Baire class one.  By \cite[Theorem 8.38]{Kec}, the points of continuity of $\id$ are dense in $(\Ex_n,\wk)$; by \cite[Proposition 2.2]{jungepisier}, the points of continuity of $\id$ are those $E\in \Ex_n$ for which $E^*\in \Ex_n$.  It follows that the set of all $E\in \OS_n$ so that both $E$ and $E^*$ are in $\Ex_n$ is weakly dense in $\OS_n$, leading to the same contradiction.
\end{nrmk}

Although it plays no pivotal role in the sequel, we observe the following:

\begin{lemma}
For every $n$, $\Ex_n$ is a weakly $\Pi_3^0$ subset of $\OS_n$.
\end{lemma}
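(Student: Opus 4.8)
The plan is to realize $\Ex_n$ as a countable intersection of $F_\sigma$ sets. I would work with the exactness constant
\[ \ex(E):=\inf\{d_{\cb}(E,F) : F \text{ an } n\text{-dimensional subspace of some } M_N\}; \]
unwinding the definition of $1$-exactness given above, $E\in\Ex_n$ if and only if $\ex(E)=1$, and since $\|u\|_{\cb}\|u^{-1}\|_{\cb}\geq\|\operatorname{id}\|_{\cb}=1$ we always have $\ex\geq 1$, so this is the same as $\ex(E)\leq 1$. The two ingredients are a semicontinuity statement for the strong metric in the weak topology and a countability statement reducing the infimum over \emph{all} matricial spaces to an infimum over a single fixed countable family.

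For the semicontinuity, I would first observe that for each fixed $k$ and each fixed $G\in\OS_n$ the map $E\mapsto \log d_k(E,G)$ is $1$-Lipschitz for the pseudometric $\log d_k$: the multiplicative triangle inequality $d_k(E,G)\leq d_k(E,E')d_k(E',G)$ and its symmetric counterpart give $|\log d_k(E,G)-\log d_k(E',G)|\leq \log d_k(E,E')$. Since $\delta_w\geq 2^{-k}\log d_k$, this map is weakly continuous. Hence $E\mapsto d_{\cb}(E,G)=\sup_k d_k(E,G)$ is a supremum of weakly continuous functions, so it is weakly lower semicontinuous; in particular each sublevel set $\{E : d_{\cb}(E,G)\leq c\}$ is weakly closed.

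For the countability, I would fix, for each $N$, the countable collection of subspaces of $M_N$ spanned by $n$ matrices with entries in $\q+i\q$, and let $\{E_l\}_{l\in\n}$ enumerate all of these over all $N$. Each $E_l$ is matricial, hence $\ex(E_l)=1$, so submultiplicativity of the exactness constant, $\ex(E)\leq d_{\cb}(E,E_l)\ex(E_l)$, gives $\ex(E)\leq d_{\cb}(E,E_l)$ for every $E$ and $l$. Conversely, given a matricial $F\subseteq M_N$ and $\e>0$, perturbing a basis of $F$ to nearby Gaussian-rational matrices produces some $E_l\subseteq M_N$ with $d_{\cb}(F,E_l)<1+\e$: for linear maps between subspaces of the \emph{fixed} space $M_N$ the cb-norm is a continuous function of the map (by Smith's lemma it equals the $N$th matrix norm), so $u\mapsto \|u\|_{\cb}\|u^{-1}\|_{\cb}$ is continuous on the finite-dimensional space of bijections and tends to $1$ as the bases coalesce. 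Combining the two estimates yields $\ex(E)=\inf_l d_{\cb}(E,E_l)$ for every $E\in\OS_n$.

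Putting the ingredients together, $E\in\Ex_n$ if and only if for every $m$ there is some $l$ with $d_{\cb}(E,E_l)\leq 1+\tfrac1m$, that is,
\[ \Ex_n=\bigcap_{m\in\n}\ \bigcup_{l\in\n}\ \{E\in\OS_n : d_{\cb}(E,E_l)\leq 1+\tfrac1m\}. \]
Each set in braces is weakly closed by the semicontinuity step, so each inner union is a countable union of weakly closed sets, i.e.\ $F_\sigma$ ($\Sigma^0_2$), and the outer countable intersection is therefore $\Pi^0_3$. I expect the main obstacle to be the density/perturbation claim in the countability step—verifying that the Gaussian-rational matricial spaces are strongly dense in all matricial spaces—since this is where one must control cb-norms under perturbation of the underlying subspace; the semicontinuity argument and the final complexity count are then routine.
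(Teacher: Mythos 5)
Your proof is correct and is essentially the paper's argument: the paper obtains the same countable dense family of matricial spaces (via the small perturbation argument and Smith's lemma rather than explicit Gaussian-rational bases) and writes the complementary decomposition $\neg\Ex_n=\bigcup_m\bigcap_k\bigcup_l\{E':d_l(E',F_k)>1+\tfrac1m\}$ with the inner sets weakly open, which is exactly your $\bigcap_m\bigcup_l\{d_{\cb}(E,E_l)\leq 1+\tfrac1m\}$ after De Morgan. The semicontinuity and complexity count are the same in both versions.
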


\begin{proof}
By the small perturbation argument, for any unital separable C$^*$-algebra $A$, the space of all $n$-dimensional operator subsystems of $A$ is separable in the strong topology for all $n$. As the relative weak and strong topologies coincide on the matricial operator systems (Smith's lemma), the set of matricial operator systems is separable in the weak topology; let $(F_k)$ enumerate a countable dense set of them.  Then $E\in \neg \Ex_n$ if and only if $$E\in \bigcup_m \bigcap_k  \bigcup_l \{E' \in \OS_n \ : \ d_l(E',F_k)>1 + \tfrac{1}{m}\}.$$  It remains to note that $\{E'\in \OS_n \ : \ d_l(E',F_k)> 1 +\frac{1}{m}\}$ is weakly open.
\end{proof}

\begin{question}\label{hierarchy}
Where exactly in the (weak) Borel hierarchy does $\Ex_n$ lie?  In particular, is $\Ex_n$ weakly $\Pi_3^0$-complete?
\end{question}

\begin{nrmk}
Let $\OSy_n$ be the subset of  $\OS_n$ consisting of all $n$-dimensional operator systems, where the morphisms are now \emph{unital} linear bijections. We conclude this section by mentioning that all of the discussion in this section holds for the category of $n$-dimensional operator systems rather than the category of $n$-dimensional operator spaces, the only exception being that the corresponding version of Theorem \ref{notGde} holds for $n\geq 5$ instead of $n\geq 3$.
\end{nrmk}

\section{The main result}

In this section, we use the fact that $\Ex_n$ is not $G_\de$ in the weak topology to prove that the 1-exact operator spaces (resp. 1-exact operator systems) are not uniformly definable by a sequence of types in the language of operator spaces (resp. the language of operator systems).  For simplicity, we work entirely in the operator space category, although all proofs carry over to the operator system category without any change to the proofs.

In what follows, we use the notation and terminology from Appendix \ref{codes}.  We let $\la$ denote the language of operator spaces (see, e.g. \cite{GS}) and we let $\frak{M}_n\subseteq \frak{M}$ denote the set of codes for $n$-dimensional operator spaces.  We claim that $\frak{M}_n$ is a $G_\de$ subset of $\frak{M}$, whence $(\frak{M}_n,\logic)$ is also a Polish space.  Indeed, the set of operator systems of dimension at most $n$ is a universally axiomatizable class, whence the set of codes for such operator systems forms a closed subset of $\frak{M}$ by Lemma \ref{closed}.  It follows that $\frak{M}_n$ is the intersection of a closed subset of $\frak{M}$ with an open subset of $\frak{M}$, so it is $G_\de$. 

We have a ``forgetful'' map $F:\frak{M}_n\to \OS_n$ given by sending an element of $\frak{M}_n$ to the operator space it codes.  

\begin{lemma}\label{open}
The map $F:(\frak{M}_n, \logic)\to (\OS_n,\wk)$ is a continuous, open, surjective map.
\end{lemma}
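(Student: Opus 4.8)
The plan is to verify surjectivity, continuity, and openness in turn, with openness the only substantive point. Surjectivity is immediate: given $E\in\OS_n$, fix a countable dense sequence $(a_j)$ in $E$ and record the matrix-norm data of all linear combinations of the $a_j$; this is a code $M\in\frak{M}_n$ with $F(M)=E$. For continuity, since both $(\OS_n,\wk)$ and $(\frak{M}_n,\logic)$ are Polish it suffices to check sequential continuity. Suppose $M_i\to M$ in the logic topology, and write $E=F(M)$, $E_i=F(M_i)$, with coded dense sequences $(a_j)\subseteq E$ and $(a^i_j)\subseteq E_i$. Choose indices $j_1,\dots,j_n$ with $a_{j_1},\dots,a_{j_n}$ a basis of $E$. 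Because every quantifier-free $\la$-formula is a uniformly continuous function of the matrix norms of linear combinations of its variables, logic-convergence forces every such matrix norm of linear combinations of $a^i_{j_1},\dots,a^i_{j_n}$ to converge to the corresponding value for $a_{j_1},\dots,a_{j_n}$. In particular, for large $i$ the tuple $a^i_{j_1},\dots,a^i_{j_n}$ is linearly independent, hence a basis of $E_i$, and the linear bijection $\psi_i:E_i\to E$ with $\psi_i(a^i_{j_l})=a_{j_l}$ satisfies $\|\psi_i\|_k,\|\psi_i^{-1}\|_k\to 1$ for every $k$, which is exactly weak convergence $E_i\to E$.

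For openness it suffices to show that for each $M\in\frak{M}_n$ and each basic logic-open neighborhood $U$ of $M$, the image $F(U)$ contains a weak-neighborhood of $E:=F(M)$; the general case follows since basic open sets form a basis and $F$ preserves unions. Shrinking $U$, I may assume it is cut out by finitely many quantifier-free formulas $\phi_1,\dots,\phi_r$, evaluated at finitely many index-tuples from the coded sequence $(a_j)$, each required to be within $\epsilon$ of its value at $M$; let $k$ bound all matrix levels appearing in $\phi_1,\dots,\phi_r$. The key step is transport of structure. Given $\delta>0$ (to be chosen) and any $E'$ with $d_k(E',E)<1+\delta$, pick a linear bijection $\phi:E'\to E$ with $\|\phi\|_k\cdot\|\phi^{-1}\|_k<1+\delta$, and after multiplying $\phi$ by a suitable scalar arrange $\|\phi\|_k,\|\phi^{-1}\|_k\le\sqrt{1+\delta}$. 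Then $(\phi^{-1}(a_j))$ is a dense sequence in $E'$, determining a code $M'\in\frak{M}_n$ with $F(M')=E'$. Since $\phi$ distorts each matrix norm at level $\le k$ by a factor in $[1/\sqrt{1+\delta},\sqrt{1+\delta}]$, and the $\phi_s$ are uniformly continuous in these norms on the relevant bounded region, for $\delta$ small enough each $\phi_s^{M'}$ at the designated tuples lies within $\epsilon$ of $\phi_s^{M}$; that is, $M'\in U$. As $\delta_w\ge 2^{-k}\log d_k$, the function $\log d_k$ is weakly continuous, so $V:=\{E' : d_k(E',E)<1+\delta\}$ is a weak-neighborhood of $E$, and by construction $V\subseteq F(U)$.

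I expect the openness direction to be the main obstacle, and specifically making the transport estimate uniform: one must pass from a single approximate factorization controlling $d_k$ to simultaneous control of all the finitely many formula values defining $U$. The normalization $\|\phi\|_k,\|\phi^{-1}\|_k\le\sqrt{1+\delta}$, together with the uniform continuity of $\la$-formulas as functions of matrix norms, is exactly what makes this bookkeeping go through; the continuity and surjectivity directions are comparatively routine, each reducing to the elementary fact that in finite dimensions a code's data is precisely its matrix-norm data.
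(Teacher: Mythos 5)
Your proposal is correct and follows essentially the same route as the paper: continuity and surjectivity are routine, and openness is obtained by transporting the coding of $E$ along a normalized linear bijection $\phi:E'\to E$ witnessing $d_k(E',E)$ close to $1$, with $k$ chosen large enough to cover all matrix levels (sorts) appearing in a basic logic-open neighborhood. Your added normalization $\|\phi\|_k,\|\phi^{-1}\|_k\le\sqrt{1+\delta}$ and the explicit reduction of the weak neighborhood to a $\delta_w$-ball via $\log d_k\le 2^k\delta_w$ just fill in details the paper leaves implicit.
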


\begin{proof} Continuity is trivial to check as is surjectivity. Fix $E\in \OS_n$ and choose $X\in F^{-1}(E)$. Let $\{x_i\}$ be the coding of $E$ given by $X$. A basic open neighborhood $\mathcal U$ of $X$ checks a finite number of conditions over a finite number of sorts involving only $\{x_1,\dotsc, x_q\}$ for some $q$. It follows that we may choose $k, \eta>0$ so that for each $E'$ such that $d_k(E',E)<\eta$ witnessed by $\phi: E'\to E$, we have that the coding $X'$ of $E'$ defined by $\{x_i' := \phi^{-1}(x_i)\}$ belongs to $\mathcal U$. Hence the basic weak open neighborhood $\{E'\in \OS_n : d_k(E',E)<\eta\}$ of $E$ is contained in $F(\mathcal U)$, and openess follows.

\end{proof}

%
%
%

\begin{thm}\label{notomittingtypes}
The class of $1$-exact operator spaces is not uniformly definable by a sequence of types in the language of operator spaces.  
\end{thm}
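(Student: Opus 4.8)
The plan is to argue by contradiction, exploiting the continuous open surjection $F\colon(\frak{M}_n,\logic)\to(\OS_n,\wk)$ from Lemma~\ref{open} to convert a definability assumption into a Baire-category statement about $\Ex_n$ that is ruled out by Theorem~\ref{notGde}. So suppose $1$-exactness is witnessed by formulas $\phi_{m,j}(\vec x_m)$ satisfying conditions (1)--(3) from the introduction, and fix $n\geq 3$. Writing $S:=F^{-1}(\Ex_n)\subseteq\frak{M}_n$, condition (3) says precisely that $S=\{X:\ (\sup_{\vec x_m}\inf_j\phi_{m,j})^{F(X)}=0\text{ for every }m\}$. I would first show that $S$ is a dense $G_\de$, hence comeager, subset of the Polish space $(\frak{M}_n,\logic)$, and then push this comeagerness down through $F$ to conclude that $\Ex_n$ is weakly comeager in $\OS_n$ --- contradicting Theorem~\ref{notGde}.

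The heart of the argument is the $G_\de$ claim, and this is where finite-dimensionality is indispensable. Fix $m$ and set $\theta_N:=\min_{j\leq N}\phi_{m,j}$, a decreasing sequence of (finitary) formulas with pointwise limit $\psi:=\inf_j\phi_{m,j}$. By condition (2) all the $\phi_{m,j}$, and hence the $\theta_N$ and $\psi$, share a single modulus of uniform continuity, so $\psi$ is continuous. Crucially, for $E\in\OS_n$ the tuples $\vec x_m$ range over a bounded subset of the finite-dimensional space $E$ (and its matrix amplifications), which is compact; Dini's theorem then gives $\theta_N\to\psi$ uniformly, whence $(\sup_{\vec x_m}\inf_j\phi_{m,j})^E=(\sup_{\vec x_m}\psi)^E=\inf_N(\sup_{\vec x_m}\theta_N)^E$. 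Now each $\sup_{\vec x_m}\theta_N$ is a finitary sentence, so $X\mapsto(\sup_{\vec x_m}\theta_N)^{F(X)}$ is $\logic$-continuous; taking the infimum over $N$ exhibits $X\mapsto(\sup_{\vec x_m}\inf_j\phi_{m,j})^{F(X)}$ as an infimum of continuous functions, hence upper semicontinuous. Therefore each set $\{X:(\sup_{\vec x_m}\inf_j\phi_{m,j})^{F(X)}<\tfrac1k\}$ is $\logic$-open, and $S$, being the intersection of these over all $m,k$, is $G_\de$.

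For density, recall that $\Ex_n$ is weakly dense in $\OS_n$; since $F$ is continuous, open, and surjective, $S=F^{-1}(\Ex_n)$ meets every nonempty $\logic$-open set and is therefore dense, so $S$ is comeager. Finally I would record the elementary fact that a continuous, open, surjective map between Polish spaces reflects comeagerness: if $A\subseteq\OS_n$ is Borel (here $A=\Ex_n$ is $\Pi_3^0$) and $F^{-1}(A)$ is comeager, then $A$ is comeager --- for if $\OS_n\setminus A$ were nonmeager it would, having the Baire property, be comeager in some nonempty weakly open $V$, and then $F^{-1}(\OS_n\setminus A)$ would be comeager in the nonempty open set $F^{-1}(V)$, contradicting comeagerness of $F^{-1}(A)$. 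Applying this with $A=\Ex_n$ yields that $\Ex_n$ is weakly comeager, contradicting Theorem~\ref{notGde}.

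The main obstacle is the $G_\de$ step: a priori one only has $\sup_{\vec x_m}\inf_j\phi_{m,j}\leq\inf_N\sup_{\vec x_m}\theta_N$, and the reverse inequality (equivalently, the interchange of $\sup$ and $\inf$) fails for general structures. It is exactly the compactness of the domain of quantification in a finite-dimensional operator space, together with the uniform modulus of continuity from condition (2), that forces equality via Dini's theorem and thereby makes the defining function upper semicontinuous. Everything else is soft: the openness, continuity, and surjectivity of $F$, the weak density of $\Ex_n$, and the category transfer.
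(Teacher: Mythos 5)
Your proposal is correct in substance and shares the paper's skeleton --- argue by contradiction, pull the definability hypothesis back through the continuous open surjection $F:(\frak{M}_n,\logic)\to(\OS_n,\wk)$, and collide with the descriptive-set-theoretic facts of Section 2 --- but both halves are executed differently. For the $G_\de$ step, the paper never forms the genuine $\sup_{\vec x_m}$: it enumerates the tuples $(x_{m,k})_k$ from the distinguished countable dense set of the code and observes that each $X\mapsto\inf_j\phi_{m,j}^X(x_{m,k})$ is an infimum of $\logic$-continuous functions, hence upper semicontinuous, so the sets $U_{m,k,q}$ are open and $F^{-1}(\Ex_n)=\bigcap_{m,k,q}U_{m,k,q}$ (uniform continuity guaranteeing that controlling dense tuples controls all tuples). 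You instead keep the $\sup$ and justify the interchange $\sup\inf=\inf_N\sup\theta_N$ via Dini on the compact balls of a finite-dimensional space; that interchange is fine. For the endgame, the paper applies Sierpi\'nski's theorem to $F|F^{-1}(\Ex_n)$ to conclude $(\Ex_n,\wk)$ is Polish, hence $G_\de$, contradicting the corollary to Theorem \ref{notGde}; you instead prove density of $F^{-1}(\Ex_n)$ and transfer comeagerness forward through $F$ (your reflection lemma is correct, and $\Ex_n$ has the Baire property since the paper shows it is $\Pi^0_3$), contradicting Theorem \ref{notGde} directly. Both routes land.

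The one place you owe a real argument is the assertion that, being a ``finitary sentence,'' $X\mapsto(\sup_{\vec x_m}\theta_N)^{F(X)}$ is $\logic$-continuous. On the full code space $\frak{M}$ this is simply false: the value of a $\sup$-sentence is a supremum over the countable dense set of $\logic$-continuous functions, hence only lower semicontinuous, and for a merely l.s.c.\ function $g$ the set $\{g<\tfrac1k\}$ need not be open --- so your union $\bigcup_N\{(\sup\theta_N)^{F(X)}<\tfrac1k\}$ would not visibly be open. The claim is nevertheless true on $\frak{M}_n$: if $X_i\to X$ in $\logic$ with all codes $n$-dimensional, then $F(X)$ is completely isometric to $\prod_\u F(X_i)$ (the natural embedding is surjective because the ultraproduct of $n$-dimensional spaces is $n$-dimensional), and \L o\'s's theorem gives convergence of all sentence values; equivalently, one can compose weak continuity of sentences on $\OS_n$ with continuity of $F$. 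So finite-dimensionality is doing double duty in your argument --- once in Dini, and once, unacknowledged, in the continuity of $\sup$-sentences --- and the second use should be made explicit. The paper's device of indexing the types by tuples from the dense set is precisely what lets it avoid this issue altogether.
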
 

\begin{proof}
Suppose, towards a contradiction, that the class of $1$-exact operator spaces is uniformly definable by a sequence of types in the language of operator spaces.  Then there are formulae $\phi_{m,j}(x_1,\ldots,x_{l_m})$ in the language of operator spaces taking only nonnegative values so that, for an operator space $E$, we have that $E$ is 1-exact if and only if $$\sup_m (\sup_{\vec x}\inf_j \phi_{m,j}(\vec x))^E=0.$$  Fix $n\geq 3$.  We obtain a contradiction by showing that $\Ex_n$ is $G_\de$ in the weak topology on $\OS_n$.  Let $(x_{m,k})_k$ enumerate all elements of $\mathbb{N}^{l_m}$.  For $k,q\geq 1$, set $$U_{m,k,q}:=\{E\in \frak{M}_n \ : \ \inf_j \phi_{m,j}^E(x_{m,k})<\frac{1}{q}\}.$$  It is straightforward to see that $U_{m,k,q}$ is open in the logic topology on $\frak{M}_n$ and that $F^{-1}(\Ex_n)=\bigcap_{m,k,q}U_{m,k,q}$, so $F^{-1}(\Ex_n)$ is a $G_\de$ subset of $\frak{M}_n$, whence Polish.  We then have that $F|F^{-1}(\Ex_n):F^{-1}(\Ex_n)\to \Ex_n$ is a surjective, continuous, open map, whence $(\Ex_n,\wk)$ is Polish as well by a classical result of Sierpi\'nski (see \cite[Theorem 8.19]{Kec}).  It follows that $\Ex_n$ is a $G_\de$ subset of $\OS_n$ in the weak topology. 
\end{proof}

\begin{nrmk}
By considering $n\geq 5$ and applying the operator system version of Theorem \ref{notGde}, we obtain the operator system version of Theorem \ref{notomittingtypes}.
\end{nrmk}


\begin{nrmk}
Recall from the introduction that Theorem \ref{notomittingtypes} was inspired by a question of Iljias Farah, namely whether or not the class of exact C$^*$-algebras is uniformly definable by a sequence of types.  (For a C$^*$-algebra, exact is the same as 1-exact.)  We point out that Theorem \ref{notomittingtypes} does not seem to lead to a resolution of Farah's question.  Since the forgetful map from the category of C$^*$-algebras to the category of operator systems is an equivalence of categories, it follows from the Beth definability theorem that there are formulae in the language of operator systems that approximate, uniformly over all C$^*$-algebras, the algebra multiplication, whence any types that could be used to define the exact C$^*$-algebras could be taken in the language of operator systems.  Still, this seems to be of little use due to the loose connection between 1-exact operator systems and exact C$^*$-algebras.  For example, there are 1-exact operator systems that do not embed into any exact C$^*$-algebras; see \cite{KW} and \cite{lupini2}. 
\end{nrmk}

\section{Failure of finitary Arveson extension}

In this section, we show how the results of the previous section can be used to show that an approximate, quantitative version of Arveson's extension theorem fails.  


First, it will be convenient to introduce a new definition:

\begin{df}
An operator system $X$ is said to be \emph{CP-rigid} if, for each $n\in \n$ and $\delta>0$, there are $k\in \n$ and $\e>0$ such that, for any unital, self-adjoint map $\phi:E\to X$ with $E$ an $n$-dimensional operator system and $\|\phi\|_k<1+\e$, there is a u.c.p.\ map $\phi':E\to X$ with $\|\phi-\phi'\|<\delta$.
\end{df}

\begin{lemma}\label{CP-rigid}
For any $k$, $M_k$ is CP-rigid.
\end{lemma}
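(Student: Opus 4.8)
The plan is to prove that for any $k$, the matrix algebra $M_k$ is CP-rigid by reducing the statement to Arveson's extension theorem together with a compactness/perturbation argument. The essential point is that $M_k$ is injective as an operator system, so every u.c.p.\ map into $M_k$ extends, and the only issue is converting an \emph{approximately} completely contractive, unital, self-adjoint map into a genuinely u.c.p.\ map with small error.

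First I would fix $n$ and $\delta>0$ and argue by contradiction. Suppose no choice of $k$ and $\e$ works. Taking $k_i\to\infty$ and $\e_i\to 0$, I obtain $n$-dimensional operator systems $E_i$ and unital, self-adjoint maps $\phi_i:E_i\to M_k$ with $\|\phi_i\|_{k_i}<1+\e_i$, yet with no u.c.p.\ map within $\delta$ of $\phi_i$. Since $\OSy_n$ is weakly compact, I pass to a subnet so that $E_i$ converges weakly to some $n$-dimensional operator system $E$, realized concretely via unital linear bijections $\psi_i:E_i\to E$ whose $\|\cdot\|_m$-norms (and those of their inverses) tend to $1$ for each fixed $m$. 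Transporting along $\psi_i$, I get unital self-adjoint maps $\widetilde\phi_i=\phi_i\circ\psi_i^{-1}:E\to M_k$, whose completely bounded norms are controlled because the growing index $k_i$ in $\|\phi_i\|_{k_i}<1+\e_i$ forces complete contractivity in the limit.

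The key step is to extract a limit map $\phi_\infty:E\to M_k$ (using that the unit ball of maps $E\to M_k$ is compact since $E$ is finite dimensional and $M_k$ is fixed) which is unital, self-adjoint, and \emph{completely} contractive, and then to observe that a unital complete contraction into $M_k$ is automatically u.c.p.; equivalently, one applies Arveson's extension theorem to $\phi_\infty$ to produce a u.c.p.\ map, which on a finite-dimensional system is unital and completely positive exactly when it is unital and completely contractive. Having obtained a genuine u.c.p.\ $\phi_\infty$, the convergence $\widetilde\phi_i\to\phi_\infty$ and $\psi_i\to\id$ (in the relevant norms) yield, for large $i$, a u.c.p.\ map $\phi_i':=\phi_\infty\circ\psi_i:E_i\to M_k$ with $\|\phi_i-\phi_i'\|<\delta$, contradicting the choice of $\phi_i$.

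The main obstacle I expect is the passage to the limit: I must ensure that the bound $\|\phi_i\|_{k_i}<1+\e_i$, in which the matrix level $k_i$ grows, really does force the limit to be completely contractive at \emph{every} level $m$, and that the weak convergence $E_i\to E$ interacts correctly with the maps so that the transported maps $\widetilde\phi_i$ converge to a well-defined map on the fixed system $E$. This requires carefully coordinating the choice of $k_i$ and the modulus of convergence of the $\psi_i$ at each fixed matrix level $m\le k_i$; once that bookkeeping is in place, the identification of a unital complete contraction into $M_k$ with a u.c.p.\ map is standard, and the final perturbation estimate is routine.
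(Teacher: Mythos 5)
There is a genuine gap at the final transfer step. After extracting the limit map $\phi_\infty:E\to M_k$ (which is indeed u.c.p., since a unital completely contractive map between operator systems is completely positive), you set $\phi_i':=\phi_\infty\circ\psi_i$ and assert that it is u.c.p. But $\psi_i:E_i\to E$ is only a unital linear bijection with $\|\psi_i\|_m\to 1$ for each fixed $m$; it is not completely positive, so $\phi_\infty\circ\psi_i$ is unital and self-adjoint but in general \emph{not} completely positive. What you actually obtain is a unital self-adjoint map $\phi_i'$ with $\|\phi_i'\|_k\leq \|\psi_i\|_k\to 1$ which is norm-close to $\phi_i$ --- and this is precisely the situation you started from. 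Closing the gap requires the two facts that constitute the paper's own (two-line, direct) proof: Smith's lemma (\cite[Lemma B.4]{BO}), which says that for any map $\phi$ into $M_k$ one has $\|\phi\|_{\cb}=\|\phi\|_k$, so that the hypothesis $\|\phi\|_k<1+\e$ already controls the full cb-norm; and the quantitative consequence of the Haagerup--Paulsen--Wittstock extension theorem (\cite[Corollary B.9]{BO}) that a unital self-adjoint $\phi:E\to M_k$ admits a u.c.p.\ map $\phi'$ with $\|\phi-\phi'\|_{\cb}\leq 2(\|\phi\|_{\cb}-1)$. With these in hand one simply takes matrix level $k$ (the size of the target algebra) and $\e=\delta/2$, and the entire compactness apparatus is unnecessary.

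Two further remarks. First, even if your argument were repaired by inserting the perturbation result at the last step, it would only produce constants $k,\e$ depending nonconstructively on $n$ and $\delta$, whereas the direct argument gives constants that do not depend on $n$ or on $E$ at all; this uniformity is what makes the lemma usable later (e.g.\ in Corollary \ref{CP-rigid-opspace} and Lemma \ref{cb-fae}). Second, the soft portions of your proof --- the negation of the definition, the weak compactness of the space of $n$-dimensional systems, the extraction of $\phi_\infty$ as a complete contraction because $k_i\to\infty$ --- are all correct; the argument fails only at the point where complete positivity must be transported back from the limit system $E$ to the approximating systems $E_i$.
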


\begin{proof} Let $\phi: E\to M_k$ be as above. By Smith's lemma (see, for example, \cite[Lemma B.4]{BO}), we have that $\|\phi\|_{\cb} = \|\phi\|_k$. Next it follows from the Haagerup--Paulsen--Wittstock 
extension theorem that for any unital, self-adjoint linear map $\phi: E\to M_k$, there is a u.c.p.\ map $\phi': E\to M_k$ with $\|\phi' - \phi\|_{\cb}\leq 2(\|\phi\|_{\cb} - 1)$. (See \cite[Corollary B.9]{BO}.) It follows that, for the parameters $n,\delta$ as above, we may choose $k, \delta/2$.
\end{proof}


We will need an operator space version of the CP-rigidity of matrix algebras:

\begin{cor}\label{CP-rigid-opspace}
For any finite-dimensional operator space $E$ and any $\delta>0$, if $\phi:E\to M_k$ is a linear map with $\|\phi\|_{2k}<1+\delta$, then there is a completely contractive (c.c.) map $\phi':E\to M_k$ with $\|\phi-\phi'\|<2\delta$.
\end{cor}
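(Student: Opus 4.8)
The plan is to reduce the operator space statement (Corollary \ref{CP-rigid-opspace}) to the operator system case already handled in Lemma \ref{CP-rigid}. The standard device for passing from operator spaces to operator systems is the \emph{Paulsen system} (sometimes called the canonical operator system associated to an operator space). Given a finite-dimensional operator space $E$, its Paulsen system is
\[
S_E := \begin{pmatrix} \c \cdot 1 & E \\ E^* & \c \cdot 1 \end{pmatrix} \subseteq M_2(A),
\]
where $A$ is any C$^*$-algebra containing $E$ completely isometrically. This is an operator system of dimension roughly $2\dim E + 2$, and its matrix norm structure encodes the operator space structure of $E$: for a linear map $\phi : E \to M_k$, one forms the associated unital self-adjoint map $\Phi : S_E \to M_{2k}$ sending the off-diagonal block to $\phi$ (and $E^*$ to $\phi^*$). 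The key quantitative fact is that $\Phi$ is completely positive if and only if $\phi$ is completely contractive, and more precisely $\|\Phi\|_{\cb}$ (or $\|\Phi\|$ at an appropriate level) is controlled by $\|\phi\|_{\cb}$.

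First I would make the correspondence between $\phi$ and $\Phi$ precise, and record the quantitative comparison between $\|\phi\|_{2k}$ and the relevant norm of $\Phi$ into $M_{2k}$. The block structure is what forces the jump from level $k$ to level $2k$ in the hypothesis: to read off the complete contractivity of $\phi$ from properties of $\Phi$ we must pass through $M_2(M_k) = M_{2k}$. Then I would apply Lemma \ref{CP-rigid} to $M_k$ (whose CP-rigidity gives, for the operator system $S_E$ and tolerance $\delta$, a u.c.p.\ approximant $\Phi'$ to $\Phi$ when $\|\Phi\|$ at the appropriate matrix level is close to $1$), obtaining a u.c.p.\ map $\Phi' : S_E \to M_k$ with $\|\Phi - \Phi'\| < \delta$. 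The final step is to extract from $\Phi'$ a completely contractive map $\phi' : E \to M_k$ by restricting to the off-diagonal corner, and to check that $\|\phi - \phi'\|$ is bounded by $\|\Phi - \Phi'\|$ (up to the factor of $2$), using that the complete positivity of $\Phi'$ translates into complete contractivity of its corner.

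The main obstacle will be keeping the quantitative constants honest through the dictionary. The factor of $2$ appearing in the conclusion (and implicitly in the factor-of-$2$ bound in Lemma \ref{CP-rigid}) must be tracked carefully through the off-diagonal-corner construction: one needs that a small perturbation of $\Phi$ in operator norm yields a comparably small perturbation of $\phi$, and that the passage from the complete boundedness of $\phi$ to the near-positivity of $\Phi$ incurs at most the expected loss. The correspondence is standard (see the treatment of the Paulsen system in, e.g., \cite{BO}), so the real content is the bookkeeping rather than any new idea; I expect the estimate $\|\phi - \phi'\| < 2\delta$ to fall out directly once the norm comparisons $\|\Phi\| \leq 1 + O(\delta)$ and $\|\phi - \phi'\| \leq \|\Phi - \Phi'\|$ are established.
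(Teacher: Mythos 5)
Your proposal follows essentially the same route as the paper: pass to the Paulsen system $F\subseteq M_2(E)$, lift $\phi$ to a unital self-adjoint map $\tilde\phi:F\to M_{2k}$ with $\|\tilde\phi\|_{2k}<1+\delta$, apply (the proof of) Lemma \ref{CP-rigid} to get a u.c.p.\ approximant $\tilde\psi$ with $\|\tilde\phi-\tilde\psi\|<2\delta$, and take the $(1,2)$-corner. The only bookkeeping to fix is that the u.c.p.\ approximant maps into $M_{2k}$ (not $M_k$) and is within $2\delta$ of $\tilde\phi$ (the factor $2$ coming from the Haagerup--Paulsen--Wittstock bound $2(\|\tilde\phi\|_{\cb}-1)$), which is exactly the constant-tracking you flag.
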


\begin{proof}
Let $F\subseteq M_2(E)$ be the Paulsen operator system associated to $E$ (see, e.g., \cite[Appendix B]{BO}), so $E=(F)_{12}$.  For $\phi$ as above, we obtain a unital, self-adjoint map $\tilde{\phi}:F\to M_{2k}$ with $(\tilde{\phi})_{12}=\phi$ that also satisfies $\|\tilde{\phi}\|_{2k}<1+\delta$.  By (the proof of) Lemma \ref{CP-rigid}, there is a u.c.p.\ map $\tilde{\psi}:F\to M_{2k}$ such that $\|\tilde{\phi}-\tilde{\psi}\|<2\delta$.  The map $\psi:=(\tilde{\psi})_{12}$ is as desired.
\end{proof}

In \cite[Proposition 2.40]{GS}, it was shown that each $M_k$ has the dual property of \emph{CP-stability}.  As a consequence of this, for every $k$ and $\e>0$, there is $\delta>0$ such that whenever $\phi:M_k\to A$ is a unital map into a C$^*$-algebra for which $\|\phi\|_k\leq 1+\delta$, then there is a u.c.p.\ map $\psi:M_k\to A$ with $\|\phi-\psi\|<\epsilon$.

The following definition contains the central notion of this section.

\begin{df} We say that a sequence of operator systems $(X_n)$ satisfies \emph{finitary Arveson extension} (FAE) if $\prod_\u X_n$ is finitely approximately injective for all nonprincipal ultrafilters $\u$ on $\n$.  If $X$ is an operator system, then we say that $X$ satisfies FAE if the sequence that is constantly $X$ satisfies FAE.
\end{df}

We recall that an operator system $X$ is \emph{approximately injective} (a.i.) if, for every inclusion of finite-dimensional operator systems $E\subset F$, any u.c.p.\ map $\phi: E\to X$ and $\e>0$, there exists a u.c.p.\ map $\phi': F\to X$ so that $\|\phi'|_E - \phi\|<\e$.  $X$ is \emph{finitely approximately injective} (f.a.i.) if the conclusion of approximate injectivity holds with the restriction that $E\subset F\subset M_n$ for some $n$. 

\begin{nrmk}
$X$ is FAE if and only if $X^\u$ is f.a.i.\ for \emph{some} nonprincipal ultrafilter $\u$ on $\n$.  Indeed, suppose that $X^\u$ is f.a.i.\ and consider another nonprincipal ultrafilter $\mathcal{V}$ on $\n$.  Consider finite-dimensional operator systems $E\subseteq F\subseteq M_n$, a u.c.p.\ map $\phi:E\to X^{\mathcal{V}}$, and $\e>0$.  Let $Y$ be a separable elementary substructure of $X^{\mathcal{V}}$ containing $\phi(E)$ and let $\psi:Y\to X^\u$ be an elementary embedding.  Since $X^\u$ is f.a.i.\ , there is a u.c.p.\ map $\chi:F\to X^\u$ such that $\|\chi|E-(\psi\circ \phi)\|<\e$.  Let $Z$ be a separable elementary substructure of $X^\u$ containing $\psi(Y)$ and $\chi(F)$.  Finally, let $\theta:Z\to X^{\mathcal{V}}$ be an elementary embedding such that $\theta|\psi(Y)=\psi^{-1}$.  It follows that $\theta\circ \chi:F\to X^{\mathcal{V}}$ is a u.c.p.\ map for which $\|(\theta\circ \chi)|E-\phi\|<\e$.
\end{nrmk}

\begin{nrmk}
$(M_n)$ is FAE if and only if $\prod_\u M_n$ is f.a.i.\ for \emph{some} nonprincipal ultrafilter $\u$ on $\n$.  Indeed, this follows from the fact any nonprincipal ultraproduct of matrix algebras can be embedded into any other nonprincipal ultraproduct of matrix algebras with conditional expectation.  That being said, we will soon see that $(M_n)$ is not FAE.  
\end{nrmk}


\begin{notation}
Suppose that $(X_i \ : \ i\in I)$ is a family of operator spaces (or operator systems or C$^*$-algebras), $\u$ is a nonprincipal ultrafilter on $I$, and $\prod_\u X_i$ the corresponding ultraproduct.  Given an element $(a_i)\in \ell^\infty(X_i)$, we denote its image in $\prod_\u X_i$ by $(a_i)^\bullet$.  Similarly, if $E$ is an operator space (or operator system or C$^*$-algebra) and $(\phi_i:E\to X_i)$ is a family of uniformly bounded linear maps, we let $(\phi_i)^\bullet:E\to \prod_\u X_i$ be the function defined by $(\phi_i)^\bullet((a_i)^\bullet):=(\phi_i(a_i))^\bullet$.
\end{notation}

The next lemma explains the choice of terminology.
\begin{lemma} \label{finite-fae} 
$(X_n)$ is FAE if and only if the following condition holds: given an inclusion of operator systems $E\subset M_m$ and $k\in \n$, there exists $l\in \n$ so that for any $q\geq l$ and unital map $\phi: E\to X_q$ with $\|\phi\|_l<1+1/l$ there exists a unital map $\phi': M_m\to X_q$ so that $\|\phi'\|_k<1+1/k$ and $\|\phi'|_E -\phi\|<1/k$.
\end{lemma}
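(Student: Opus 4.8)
The plan is to show that the displayed finitary condition is precisely the \L os--saturation transfer of the statement ``$\prod_\u X_n$ is f.a.i.'' Throughout I will use three standard facts about operator-system ultraproducts: (i) if $E$ is finite dimensional and $\phi_q\colon E\to X_q$ are uniformly bounded, then $\phi:=(\phi_q)^\bullet$ satisfies $\|\phi\|_p=\lim_{q\to\u}\|\phi_q\|_p$ for every $p$ (because $M_p(\prod_\u X_q)=\prod_\u M_p(X_q)$, and the norm of a map out of a fixed finite-dimensional space is an ultralimit of the norms); (ii) a unital linear map is u.c.p.\ if and only if it is completely contractive, so being u.c.p.\ is the conjunction of unitality with the closed conditions $\|\cdot\|_p\le 1$ for $p\in\n$; and (iii) $\prod_\u X_n$ is countably saturated. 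I will also use the reduction that f.a.i.\ is equivalent to its special case in which the larger system is a full matrix algebra: given $E\subseteq F\subseteq M_m$ one extends a u.c.p.\ map off $E$ to all of $M_m$ and then restricts to $F$. Combined with the earlier remark that $\prod_\u X_n$ is f.a.i.\ for \emph{some} $\u$ iff for \emph{all} $\u$, this means ``$(X_n)$ is FAE'' is the same as: for every $E\subseteq M_m$, every u.c.p.\ $\phi\colon E\to\prod_\u X_n$, and every $\e>0$, there is a u.c.p.\ $\phi'\colon M_m\to\prod_\u X_n$ with $\|\phi'|_E-\phi\|<\e$.

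For the direction ``condition $\Rightarrow$ FAE'', fix $E\subseteq M_m$, a u.c.p.\ $\phi\colon E\to\prod_\u X_n$, and $\e>0$. Lift $\phi$ to unital maps $\phi_q\colon E\to X_q$ with $(\phi_q)^\bullet=\phi$ (choosing $\phi_q(1)=1_{X_q}$), so $\lim_{q\to\u}\|\phi_q\|_p=\|\phi\|_p=1$ for every $p$. Given any level $k$, apply the hypothesis to $(E\subseteq M_m,k)$ to obtain $l$; since $\lim_{q\to\u}\|\phi_q\|_l=1<1+\tfrac1l$, for $\u$-almost every $q\ge l$ the hypothesis yields a unital $\phi'_q\colon M_m\to X_q$ with $\|\phi'_q\|_k<1+\tfrac1k$ and $\|\phi'_q|_E-\phi_q\|<\tfrac1k$. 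Then $\psi_k:=(\phi'_q)^\bullet\colon M_m\to\prod_\u X_n$ is unital, with $\|\psi_k\|_p\le\|\psi_k\|_k\le 1+\tfrac1k$ for all $p\le k$ and $\|\psi_k|_E-\phi\|\le\tfrac1k$. Hence the type over $\prod_\u X_n$ asserting that $\phi'\colon M_m\to\prod_\u X_n$ is unital, satisfies $\|\phi'\|_p\le 1$ for all $p$, and satisfies $\|\phi'|_E-\phi\|\le\e/2$ is finitely approximately satisfiable, the maps $\psi_k$ serving as witnesses. By countable saturation this type is realized by a genuine u.c.p.\ map $\phi'$ with $\|\phi'|_E-\phi\|\le\e/2<\e$, as required. (Invoking saturation rather than the CP-stability of $M_m$ sidesteps the issue that the target here is merely an operator system, not a C$^*$-algebra.)

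For the direction ``FAE $\Rightarrow$ condition'', argue by contradiction. If the condition fails, there are $E\subseteq M_m$ and $k$ such that for every $l$ there exist $q_l\ge l$ and a unital $\phi_l\colon E\to X_{q_l}$ with $\|\phi_l\|_l<1+\tfrac1l$ admitting \emph{no} unital $\phi'\colon M_m\to X_{q_l}$ with both $\|\phi'\|_k<1+\tfrac1k$ and $\|\phi'|_E-\phi_l\|<\tfrac1k$. Since $q_l\ge l$, pass to a subsequence along which the targets $r_j:=q_{l_j}$ are strictly increasing, and fix a nonprincipal ultrafilter $\mathcal W$ on the set of indices $j$; as $j\mapsto r_j$ is injective with $r_j\to\infty$, we have $\prod_{\mathcal W}X_{r_j}\cong\prod_\u X_n$ for the nonprincipal pushforward $\u$, which is f.a.i.\ by FAE. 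The assembled map $\phi:=(\phi_{l_j})^\bullet\colon E\to\prod_{\mathcal W}X_{r_j}$ is unital and satisfies $\|\phi\|_p=\lim_{j\to\mathcal W}\|\phi_{l_j}\|_p=1$ for every $p$ (since $\|\phi_{l_j}\|_p\to 1$ once $l_j\ge p$), hence is u.c.p. By f.a.i.\ there is a u.c.p.\ $\Phi\colon M_m\to\prod_{\mathcal W}X_{r_j}$ with $\|\Phi|_E-\phi\|<\tfrac1{2k}$. Lifting $\Phi$ to unital $\Phi_j\colon M_m\to X_{r_j}$, we have $\lim_{j\to\mathcal W}\|\Phi_j\|_k=\|\Phi\|_k=1$ and $\lim_{j\to\mathcal W}\|\Phi_j|_E-\phi_{l_j}\|=\|\Phi|_E-\phi\|<\tfrac1{2k}$, so for $\mathcal W$-many $j$ the map $\Phi_j$ is unital with $\|\Phi_j\|_k<1+\tfrac1k$ and $\|\Phi_j|_E-\phi_{l_j}\|<\tfrac1k$, contradicting the non-extendability of $\phi_{l_j}$.

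The main obstacle is not any single estimate but lining up the quantifiers and the index sets. On the ``$\Rightarrow$ FAE'' side the delicate point is that controlling $\|\phi'_q\|_k$ at a \emph{single} level $k$ never makes the assembled map completely contractive; this is exactly why the uniformity of $l$ (independent of $q$ and of $\phi$) in the hypothesis is indispensable, and why countable saturation, which realizes the complete-contractivity conditions at all levels simultaneously, is the right tool. On the ``FAE $\Rightarrow$'' side the subtlety is that the witnesses $\phi_l$ live in varying coordinates $X_{q_l}$; the passage to a strictly increasing subsequence of targets is precisely what guarantees that the ultraproduct we build is genuinely one of the products $\prod_\u X_n$ to which the FAE hypothesis applies, so that f.a.i.\ may legitimately be invoked.
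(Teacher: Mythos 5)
Your proposal is correct and follows essentially the same route as the paper: the ``FAE $\Rightarrow$ condition'' direction by contradiction, assembling the failing witnesses $\phi_l$ into a u.c.p.\ map on an ultraproduct and pushing the resulting extension back down to the coordinates, and the converse by producing, for each $k$, a unital map $\phi_k$ with $\|\phi_k\|_k<1+1/k$ close to $\phi$ on $E$ and then invoking countable ($\aleph_1$-) saturation of the ultraproduct. The only differences are expository: you spell out the saturation step and the reduction of f.a.i.\ to extensions onto full matrix algebras, which the paper leaves implicit.
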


\begin{proof} 
First suppose that $(X_n)$ is FAE and fix $E\subseteq M_m$ and $k\in \n$.  Suppose, for a contradiction, that no $l$ exists as desired.  Then for each $l$, we can find $q_l\geq l$ and a unital map $\phi_l:E\to X_{q_l}$ such that $\|\phi_l\|<1+1/l$ and such that $\phi_l$ is at least $1/k$ away from all unital linear maps $\phi':M_m\to X_{q_l}$ with $\|\phi'\|_k<1+1/k$.  Let $I:=\{q_l \ : \ l\in \n\}$ and note that $I$ is infinite.  Let $\u$ be a nonprincipal ultrafilter on $\n$ such that $I\in \u$.  Then $\phi:=(\phi_l)^\bullet:E\to \prod_\u X_n$ is a unital, completely contractive map, whence it is u.c.p.  Since $\prod_\u X_n$ is f.a.i., there is a u.c.p. map $\phi':M_m\to \prod_\u M_n$ such that $\|\phi'|_E-\phi\|<1/k$.  Without loss of generality, we may write $\phi':=(\phi'_n)^\bullet$, where each $\phi'_n:M_m\to X_n$ is unital and linear.  We obtain the desired contradiction since $\|\phi'_n\|_k<1+1/k$ and $\|\phi'_n|_E-\phi_n\|<1/k$ for almost all $n\in I$.  

For the converse, consider a u.c.p. map $\phi:E\to \prod_\u X_n$, where $E\subseteq M_m$ is an operator subsystem, and $\epsilon>0$.  By assumption, for any $k>0$, there is a unital linear map $\phi_k:M_m\to \prod_\u X_n$ such that $\|\phi_k\|_k<1+1/k$ and such that $\|\phi_k|E-\phi\|<\epsilon$.  The result follows from the fact that $\prod_\u X_n$ is $\aleph_1$-saturated.   
\end{proof}


For the rest of this subsection, we fix a nonprincipal ultrafilter $\u$ on $\n$.

\begin{lemma} \label{cb-fae} Assume that $(M_n)$ is FAE.  Fix an inclusion of operator spaces $E\subset M_m$. Then for every $\eta>0$ there exists $l\in \n$ so that for any  map $\phi: E\to \prod_\u M_n$ with $\|\phi\|_l< 1+1/l$, there exists c.c.\ $\phi': M_m\to \prod_\u M_n$ with $\|\phi'|_E - \phi\|<\eta$.
\end{lemma}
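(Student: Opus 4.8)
The plan is to establish the operator-space statement in the same way the operator-system Lemma \ref{finite-fae} was established, transporting the problem through the Paulsen construction and then transporting back by taking a corner. Throughout I identify $M_2(\prod_\u M_n)$ with $\prod_\u M_{2n}$; since $\prod_\u M_n$ is f.a.i.\ and any nonprincipal ultraproduct of matrix algebras is a conditional-expectation image of any other, $\prod_\u M_{2n}$ is again f.a.i., so $(M_{2n})$ is FAE and Lemma \ref{finite-fae} is available for the sequence $(M_{2n})$.

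First I would fix $\eta>0$ and $E\subseteq M_m$, choosing the output level $l$ last. Given $\phi:E\to\prod_\u M_n$ with $\|\phi\|_l<1+1/l$, I form the Paulsen system $S(E)\subseteq M_{2m}$ and the unital, self-adjoint lift $\tilde\phi:S(E)\to M_2(\prod_\u M_n)=\prod_\u M_{2n}$ with $(\tilde\phi)_{12}=\phi$, exactly as in the proof of Corollary \ref{CP-rigid-opspace}. The point I would verify here is that the lift is continuous at each \emph{fixed} matrix level, with $\|\tilde\phi\|_j\le\max(1,2\|\phi\|_j-1)$ for every $j$ (the diagonal of $\tilde\phi$ is the unital part and the off-diagonal scales with $\phi$), so that $\|\tilde\phi\|_{l_0}\to 1$ as $\|\phi\|_{l_0}\to 1$. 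Applying Lemma \ref{finite-fae} to $(M_{2n})$, the inclusion $S(E)\subseteq M_{2m}$ and a parameter $k$ (chosen below) yields a threshold $l_0$; writing $\tilde\phi=(\tilde\phi_q)^\bullet$ and using that the level-$l_0$ norm is the ultralimit of the coordinate norms, for $\u$-almost every $q$ the hypothesis of Lemma \ref{finite-fae} holds for $\tilde\phi_q$, producing unital $\tilde\psi_q:M_{2m}\to M_{2q}$ with $\|\tilde\psi_q\|_k<1+1/k$ and $\|\tilde\psi_q|_{S(E)}-\tilde\phi_q\|<1/k$. Reassembling gives a unital $\tilde\psi:=(\tilde\psi_q)^\bullet:M_{2m}\to\prod_\u M_{2n}$ with $\|\tilde\psi\|_k\le 1+1/k$ and $\|\tilde\psi|_{S(E)}-\tilde\phi\|\le 1/k$.

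The remaining step is to replace the nearly contractive unital $\tilde\psi$ by a genuine u.c.p.\ map $\tilde\psi'$; its $(1,2)$-corner $\phi':=(\tilde\psi')_{12}:M_m\to\prod_\u M_n$ is then automatically completely contractive, and the corner estimate gives $\|\phi'|_E-\phi\|\le\|\tilde\psi'-\tilde\psi\|+\|\tilde\psi|_{S(E)}-\tilde\phi\|$. This correction is the main obstacle, and the point where one must be careful. The natural tool, CP-rigidity of the target (Lemma \ref{CP-rigid}), is useless here: the CP-rigidity constant for $M_{2q}$ controls the level $2q$ (via Smith's lemma at the target size), which grows with $q$, whereas Lemma \ref{finite-fae} only controls the fixed level $k$. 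The resolution is to use the dual property instead --- the CP-stability of the \emph{source} $M_{2m}$ recorded after Corollary \ref{CP-rigid-opspace} --- whose constant depends only on $2m$ and the desired error and is \emph{uniform over the target} C$^*$-algebra $\prod_\u M_{2n}$. Taking $k\ge 2m$ so that $\|\tilde\psi\|_{2m}\le\|\tilde\psi\|_k$ lies within the CP-stability tolerance for error $\eta/4$ produces a u.c.p.\ $\tilde\psi'$ with $\|\tilde\psi'-\tilde\psi\|<\eta/4$.

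Finally I would fix the constants in the correct order: CP-stability of $M_{2m}$ for error $\eta/4$ gives $\delta$; choose $k\ge\max(2m,\delta^{-1},2\eta^{-1})$; Lemma \ref{finite-fae} then gives $l_0$; set $l:=2l_0+1$, so that $\|\phi\|_l<1+1/l$ forces $\|\tilde\phi\|_{l_0}<1+1/l_0$ (using $\|\tilde\phi\|_{l_0}\le\max(1,2\|\phi\|_{l_0}-1)$ and $\|\phi\|_{l_0}\le\|\phi\|_l<1+1/l$), which is the hypothesis of Lemma \ref{finite-fae}. With these choices $\|\phi'|_E-\phi\|<\eta/4+1/k<\eta$, as required. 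The only genuinely delicate points are this uniform correction (source-side CP-stability in place of target-side CP-rigidity) and the fixed-level continuity of the Paulsen lift; everything else is the same ultraproduct bookkeeping already used in Lemma \ref{finite-fae}.
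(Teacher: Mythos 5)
Your proposal is correct and follows essentially the same route as the paper's proof: pass to the Paulsen system $F\subseteq M_{2m}$, apply Lemma \ref{finite-fae} to that inclusion (with $k>\max(2m,1/\delta)$ where $\delta$ witnesses CP-stability of $M_{2m}$), correct the resulting nearly contractive unital maps to u.c.p.\ maps via that CP-stability, and take the $(1,2)$-corner. Your extra caution about the matrix-level norms of the Paulsen lift (absorbed by taking $l=2l_0+1$) is harmless but unnecessary, since one in fact has the sharper estimate $\|\tilde\phi\|_j\le\|\phi\|_j$ used implicitly in the paper.
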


\begin{proof} Let $F\subseteq M_2(M_m)$ be the Paulsen operator system associated to $E$.  Let $\delta<\frac{\eta}{2}$ witness the CP-stability of $M_{2m}$ corresponding to the parameter $\frac{\eta}{2}$.  Take $k> \max(2m,\frac{1}{\delta})$.  Let $l\in \n$ be as in the conclusion of Lemma \ref{finite-fae} for $F\subseteq M_{2m}$ and $k$.  

We claim that this $l$ is as desired.  Towards this end, suppose that $\phi:E\to \prod_\u M_n$ is a linear map with $\|\phi\|_l<1+\frac{1}{l}$.  Write $\phi=(\phi_i)^\bullet$ with each $\phi_i:E\to M_i$ linear and $\|\phi_i\|_l<1+\frac{1}{l}$.  We get associated unital, self-adjoint maps $\tilde{\phi_i}:F\to M_2(M_i)$ with $\|\tilde{\phi_i}\|_l< 1+\frac{1}{l}$.  Thus, for $i\geq l$, we get unital maps $\phi_i':M_{2m}\to M_{2i}$ with $\|\phi_i'\|_k\leq 1+\frac{1}{k}$ and $\|\phi_i'|_F-\tilde{\phi_i}\|\leq \frac{1}{k}$.  By choice of $k$, there is a u.c.p.\ map $\psi_i:M_{2m}\to M_{2i}$ with $\|\phi_i'-\psi_i\|\leq \frac{\eta}{2}$.  It follows that $\phi':=((\psi_i)_{12})^\bullet$ is the desired map.   

\end{proof}

Using the previous lemma together with the fact that there are only finitely many subspaces of a given matrix algebra, under the assumption that $(M_n)$ is FAE, there is a function $\sigma:\n\to \n$ so that, given any operator space $E\subseteq M_m$ and any linear map $\phi:E\to \prod_\u M_n$ with $\|\phi\|_{\sigma(m)}\leq 1+\frac{1}{\sigma(m)}$, there is a c.c.\ $\phi':M_m\to \prod_\u M_n$ with $\|\phi'|_E-\phi\|<\frac{1}{m}$.

We are now ready to prove the main result in this section.  


\begin{thm} \label{fae-ot}$(M_n)$ is not FAE.
\end{thm}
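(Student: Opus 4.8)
The plan is to derive a contradiction from the assumption that $(M_n)$ is FAE by showing that this assumption would force $\Ex_n$ to be uniformly definable by a sequence of types, contradicting Theorem~\ref{notomittingtypes}. Thus I would argue by contraposition: assume $(M_n)$ is FAE, and use the quantitative extension data this provides to construct, for each $n$, a sequence of types in the language of operator spaces whose common zero set is exactly the $n$-dimensional $1$-exact operator spaces.

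First I would exploit the uniform function $\sigma:\n\to\n$ produced in the paragraph immediately preceding the theorem. This function encodes the content of finitary Arveson extension into a purely quantitative, finitary statement: any near-isometry (in the $\sigma(m)$-matrix norm) of an operator subspace $E\subseteq M_m$ into $\prod_\u M_n$ extends, up to small error, to a completely contractive map on all of $M_m$. The key conceptual point is that $1$-exactness of a finite-dimensional operator space $E$ is exactly the assertion that $E$ admits near-isometric embeddings into the matrix algebras $M_m$; and for an $n$-dimensional space, such embeddings can be detected by finitely many matrix-norm conditions. The extension property furnished by $\sigma$ is what would let one turn the \emph{existence} of a good embedding into something \emph{uniformly checkable} by evaluating formulas.

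Concretely, for a fixed dimension $n$, I would write down formulas $\phi_{m,j}(\vec x)$ in the language of operator spaces that measure, for a given $n$-tuple $\vec x$ spanning an $n$-dimensional operator space $E$, how well $E$ embeds completely isometrically into $M_m$ (ranging over the relevant candidate maps, indexed by $j$, with matrix norms computed up to level $\sigma(m)$). The FAE hypothesis, via $\sigma$ and Lemma~\ref{cb-fae}, is precisely what guarantees that a near-isometric map at the finite level $\sigma(m)$ can be upgraded to a genuinely completely contractive map, so that $\inf_j\phi_{m,j}$ vanishing across all $m$ becomes equivalent to $1$-exactness of $E$. One must check conditions (1) and (2) from the introduction: nonnegativity is a normalization, and the uniform modulus of continuity for fixed $m$ follows because the formulas involve only finitely many bounded matrix-norm operations on the generators.

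The main obstacle I expect is the \emph{uniformity} in step two: producing a single sequence of types, with the modulus-of-continuity condition (2), that works simultaneously to capture $1$-exactness rather than merely giving a pointwise characterization. The function $\sigma$ is exactly the device that overcomes this — it provides a uniform, dimension-indexed bound controlling how good an approximate embedding must be before the extension kicks in, which is what allows the quantifier over candidate maps to be packaged as a bona fide type with uniformly continuous defining formulas. Once the types are in hand, the contradiction is immediate: Theorem~\ref{notomittingtypes} says no such sequence of types can exist for $1$-exactness, so the standing assumption that $(M_n)$ is FAE must fail.
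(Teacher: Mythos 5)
Your plan is essentially the paper's own proof: assume $(M_n)$ is FAE, extract the uniform extension function $\sigma$ from Lemma \ref{cb-fae}, use it to build definable predicates whose vanishing characterizes $1$-exactness, and contradict Theorem \ref{notomittingtypes}. The only implementation detail you gloss over is the precise form of the predicates --- the paper measures factorizations $E(\vec a)\to M_m\to E$ and uses CP-rigidity of matrix algebras (Corollary \ref{CP-rigid-opspace}) to upgrade the approximately contractive maps to genuinely completely contractive ones in the converse direction --- but this is exactly the realization of your sketch.
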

\begin{proof} We prove that if $(M_n)$ is FAE, then the class of $1$-exact operator spaces is uniformly definable by a sequence of types in the language of operator spaces, contradicting Theorem \ref{notomittingtypes}.  Towards that end, for every $l,m\in \n$ and every operator space $E$, we define a function $P_{l,m}^E:E^l\to \r$ by setting
\[ P_{l,m}^E(\vec a):=\inf_{\phi,\psi}\left( \|(\psi\circ \phi)(\vec a)-\vec a\|+\frac{1}{m}\right), \] where $\phi:E(\vec a)\to M_m$ is a linear map with $\|\phi\|_{2m}\leq 1+\frac{1}{m}$ and $\psi:\phi(E(\vec a))\to E$ is a linear map with $\|\psi\|_{\sigma(m)}\leq 1+\frac{1}{\sigma(m)}$.  (Here, by $E(\vec a)$ we mean the operator subspace of $E$ generated by $\vec a$.)

We first observe that each $P_{l,m}$ is a definable predicate (relative to the theory of operator spaces).  Indeed, by the discussion in Appendix \ref{defpred}, it suffices to show that:  if $(E_i)$ are operator spaces and $E:=\prod_\u E_i$, then, for every $\vec a=(\vec a_i)^\bullet\in E$, $P_{l,m}^E(\vec a)=\lim_\u P_{l,m}^{E_i}(\vec a_i)$.  We leave it to the reader to verify this equality.

Thus, in order to contradict Theorem \ref{notomittingtypes}, it remains to show that an operator space $E$ is 1-exact if and only if $\inf_mP_{l,m}^E$ is identically $0$ for all $l\in \n$.  First assume that $E$ is $1$-exact and fix $l\in \n$, $\vec a\in E^l$, and $\e>0$.  Since $E$ is $1$-exact, there is $m\in \n$ and c.c.\ maps $\phi:E(\vec a)\to M_m$ and $\psi:\phi(E(\vec a))\to E$ such that $\|(\psi\circ \phi)(\vec a)-\vec a\|<\frac{\e}{2}$.  If $\frac{1}{m}<\frac{\e}{2}$, we would be done.  Nevertheless, one can always remedy the situation to ensure that this is the case.  Indeed, consider the map $\phi_2:E(\vec a)\to M_m\oplus M_m\subseteq M_{2m}$ given by $\phi_2(x)=\phi(x)\oplus \phi(x)$ and the map $\psi_2:\phi(E(\vec a))\oplus \phi(E(\vec a))\to E$ given by $\psi_2(x\oplus y)=(\psi(x)+\psi(y))/2$.  Then $\phi_2$ and $\psi_2$ are still c.c.\ and $\psi_2\circ \phi_2=\psi\circ \phi$.  Thus, one can interate this process until the factorization is through a matrix algebra $M_q$ with $\frac{1}{q}<\frac{\e}{2}$.

We now prove the converse implication.  Suppose that $E$ is an operator space for which $\inf_m P_{l,m}^E$ is identically $0$ for all $l$.  Without loss of generality, $E$ is separable, whence we may further assume that $E$ is concretely represented as an operator subspace of $\prod_\u M_n$.  It suffices to show that the inclusion map $E\hookrightarrow \prod_\u M_n$ is a nuclear embedding.  Towards this end, fix $\vec a\in E^l$ and $\e>0$.  By assumption, there is $m\in \n$, a linear map $\phi:E(\vec a)\to M_m$ with $\|\phi\|_{2m}\leq \frac{1}{m}$ and a linear map $\psi:\phi(E(\vec a))\to E$ with $\|\psi\|_{\sigma(m)}\leq 1+\frac{1}{\sigma(m)}$ for which $\|(\psi\circ \phi)(\vec a)-\vec a\|+\frac{1}{m}<\e$.  By Corollary \ref{CP-rigid-opspace}, there is a c.c.\ map $\phi':E(\vec a)\to M_m$ with $\|\phi-\phi'\|<\frac{2}{m}$.  By the definition of $\sigma$, there is a c.c.\ map $\psi':M_m\to \prod_\u M_n$ with $\|\psi'|_{\phi(E(\vec a))}-\psi\|\leq \frac{1}{m}$.  It remains to observe that $$\|(\psi'\circ \phi')(\vec a)-\vec a\| \leq \frac{2}{m}+\frac{1}{m}(1+\frac{1}{m})+\e \leq 2\e+2\e+\e=5\e.\qedhere$$
\end{proof} 

\begin{cor} \label{prod-not-fi} $\B(H)$ is not FAE.
\end{cor}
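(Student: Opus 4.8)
The plan is to derive this from Theorem~\ref{fae-ot}. The point is that $\prod_\u M_n$ sits inside $\prod_\u \B(H)$ as the range of a conditional expectation, and finite approximate injectivity passes to such ranges; so if $\B(H)$ were FAE we could conclude that $(M_n)$ is FAE, contradicting Theorem~\ref{fae-ot}.

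First I would fix an infinite-dimensional separable Hilbert space $H$ and, for each $n$, produce a unital embedding $\iota_n\colon M_n\to \B(H)$ together with a u.c.p.\ conditional expectation $E_n\colon\B(H)\to M_n$ with $E_n\circ\iota_n=\id_{M_n}$. This is routine: identifying $H\cong\bC^n\otimes H$ yields $\B(H)\cong M_n\otimes\B(H)$, and one takes $\iota_n(x)=x\otimes 1$ and $E_n=\id_{M_n}\otimes\omega$ for any state $\omega$ on $\B(H)$. Fixing a nonprincipal ultrafilter $\u$, the induced maps $(\iota_n)^\bullet\colon\prod_\u M_n\to\prod_\u\B(H)$ and $(E_n)^\bullet\colon\prod_\u\B(H)\to\prod_\u M_n$ are u.c.p., and $(E_n)^\bullet\circ(\iota_n)^\bullet=(E_n\circ\iota_n)^\bullet=\id$. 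Thus $(E_n)^\bullet$ is a conditional expectation onto a completely isometric copy of $\prod_\u M_n$ in $\prod_\u\B(H)$.

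The crux is the general fact that f.a.i.\ is inherited by ranges of conditional expectations, which I would verify straight from the definition: if $Y$ is f.a.i.\ and $P\colon Y\to Z$ is a u.c.p.\ map fixing an operator subsystem $Z\subseteq Y$ pointwise, then $Z$ is f.a.i. Indeed, given $E\subseteq F\subseteq M_m$, a u.c.p.\ map $\phi\colon E\to Z$, and $\e>0$, I regard $\phi$ as a map into $Y$, extend it to a u.c.p.\ $\psi\colon F\to Y$ with $\|\psi|_E-\phi\|<\e$ using f.a.i.\ of $Y$, and put $\phi':=P\circ\psi$; since $P$ is a complete contraction fixing $Z$, we get $\|\phi'|_E-\phi\|=\|P\circ(\psi|_E-\phi)\|<\e$, so $\phi'$ witnesses f.a.i.\ of $Z$.

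Applying this with $Y=\prod_\u\B(H)$, $Z=\prod_\u M_n$, and $P=(E_n)^\bullet$ finishes the argument: were $\B(H)$ FAE, then $\prod_\u\B(H)$ would be f.a.i., hence so would $\prod_\u M_n$, making $(M_n)$ FAE and contradicting Theorem~\ref{fae-ot}. There is no serious obstacle here; the only point requiring care is checking that $(E_n)^\bullet$ genuinely restricts to the identity on the embedded copy of $\prod_\u M_n$ and is a complete contraction, so that norms do not grow when one pushes the extension $\psi$ back down with $P$.
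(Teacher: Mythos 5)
Your proposal is correct and follows exactly the paper's route: the paper's proof is the one-line observation that $\prod_\u M_n$ embeds into $\B(H)^\u$ with a conditional expectation, so f.a.i.\ of $\B(H)^\u$ would force f.a.i.\ of $\prod_\u M_n$, contradicting Theorem~\ref{fae-ot}. You have simply filled in the routine details (the explicit $\iota_n$, $E_n$, and the verification that f.a.i.\ passes to ranges of u.c.p.\ projections) that the paper leaves implicit.
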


\begin{proof} There is an embedding $\prod_\u M_n\hookrightarrow \B(H)^\u$ admitting a conditional expectation; hence, if $\B(H)^\u$ were f.a.i., then this would also be true for $\prod_\u M_n$, a contradiction.
\end{proof}

We end this subsection with some revisionist history.  The argument in the proof of Theorem \ref{fae-ot} was inspired by a very similar argument  of the second author showing that nuclearity for operator systems is uniformly definable by a sequence of types.  The main change that needs to be made is that the analogy of the function $\sigma$ above is guaranteed to exist by the CP-stability of matrix algebras.  (Technically speaking the predicates analogous to the $P_{l,m}$'s above are not actually definable in the language of operator systems-due to usual technicalities concerning unitality-so one needs to work in an expanded language.)  This proof then showed that the nuclear $n$-dimensional operator systems form a weakly $G_\delta$ set (a fact that appears not to have been observed before) and led us to consider whether or not the 1-exact $n$-dimensional operator systems were also weakly $G_\delta$.  The first author later pointed out that the authors of \cite{FHRTW} changed their original proof of the fact that nuclear C$^*$-algebras are uniformly definable by a sequence of types in the language of C$^*$-algebras in such a way that the proof carried over to the category of operator systems. 

\subsection{Clarifying remarks about FAE}

\begin{prop} A unital C$^*$-algebra is f.a.i.\ if and only if it has the WEP.
\end{prop}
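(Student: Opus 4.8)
The plan is to prove both implications against the weak-expectation form of WEP: a unital C$^*$-algebra $A$ has WEP precisely when, for one (equivalently every) faithful unital representation $A\subseteq \B(H)$, there is a u.c.p.\ map $\mathcal E\colon \B(H)\to A^{**}$ restricting to the identity on $A$. I would first record a convenient reformulation of f.a.i.: taking $F=M_n$ in the definition (and conversely restricting to $F$ an extension already defined on $M_n$) shows that $A$ is f.a.i.\ if and only if, for every $n$, every operator subsystem $E\subseteq M_n$, every u.c.p.\ $\phi\colon E\to A$, and every $\e>0$, there is a u.c.p.\ $\phi'\colon M_n\to A$ with $\|\phi'|_E-\phi\|<\e$.

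For the direction WEP $\Rightarrow$ f.a.i., fix $E\subseteq M_n$ and a u.c.p.\ $\phi\colon E\to A\subseteq \B(H)$. Since $\B(H)$ is injective, Arveson's theorem extends $\phi$ to a u.c.p.\ $\tilde\phi\colon M_n\to \B(H)$; composing with a weak expectation $\mathcal E$ gives a u.c.p.\ $\Theta:=\mathcal E\circ\tilde\phi\colon M_n\to A^{**}$ that agrees with $\phi$ on $E$ (because $\phi(E)\subseteq A$ and $\mathcal E|_A=\id$). The only remaining point is to replace the $A^{**}$-valued map $\Theta$ by an $A$-valued one. Here I would use the complete isometries $CB(M_n,A)\cong M_n(A)$ and $CB(M_n,A^{**})\cong M_n(A)^{**}$: under them u.c.p.\ maps correspond to suitably normalized positive elements, so Kaplansky density lets me approximate $\Theta$ in the point-weak* sense by u.c.p.\ maps $M_n\to A$. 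A Mazur (convex-combination) argument then upgrades point-weak* to norm convergence in $M_n(A)$, hence in $\|\cdot\|_{\cb}$, and restricting to $E$ produces the desired $\phi'$.

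For the converse f.a.i.\ $\Rightarrow$ WEP, I would build the weak expectation as a point-weak* cluster point of u.c.p.\ maps obtained by routing through matrix algebras via finite-rank compressions. Fix $A\subseteq\B(H)$, a finite-dimensional operator system $G$ with $1\in G\subseteq A$, and $\e>0$; I want a u.c.p.\ $\Phi\colon\B(H)\to A$ within $\e$ of the identity on $G$. Choosing a finite-rank projection $P$ with $P\to 1$ strongly, the compression $c_P(x)=PxP$ is a u.c.p.\ map $\B(H)\to P\B(H)P\cong M_N$, and for $P$ large enough $a\mapsto PaP$ is a unital complete order isomorphism of $G$ onto $S:=PGP\subseteq M_N$ up to $\e$. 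A routine perturbation then yields an honest u.c.p.\ map $\lambda\colon S\to A$ with $\lambda(PaP)\approx a$ for $a\in G$; applying f.a.i.\ to $S\subseteq M_N$ extends $\lambda$ to a u.c.p.\ $\Lambda\colon M_N\to A$, and $\Phi:=\Lambda\circ c_P$ is u.c.p.\ with $\Phi(a)\approx a$ on $G$. Letting $(G,\e)$ range over the directed set of such pairs and passing to a point-weak* cluster point in the (compact) space of u.c.p.\ maps $\B(H)\to A^{**}$ produces $\mathcal E$ with $\mathcal E|_A=\id_A$, i.e.\ a weak expectation.

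The hard part will be the two ``descent'' issues, which carry all the real content. In the first direction it is the passage from $A^{**}$ back to $A$ with control of $\|\phi'|_E-\phi\|$ in the correct (completely bounded) norm, which forces the $CB(M_n,A)\cong M_n(A)$ identification together with Kaplansky--Mazur rather than a naive weak* approximation. In the second direction it is verifying that $a\mapsto PaP$ really is an approximate unital complete order embedding of the fixed finite-dimensional $G$ and that the resulting near-u.c.p.\ ``un-compression'' can be corrected to a genuine u.c.p.\ map before f.a.i.\ is invoked; this is exactly the delicate point, since an arbitrary finite-dimensional operator subsystem of $\B(H)$ need not embed into any matrix algebra, and the compression trick is precisely what circumvents that obstruction.
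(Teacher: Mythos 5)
Your first direction (WEP $\Rightarrow$ f.a.i.) is essentially the argument the paper delegates to the literature: extend $\phi$ to $M_n\to\B(H)$ by injectivity of $\B(H)$, push into $A^{**}$ with the weak expectation, and descend to $A$ via the identification of u.c.p.\ maps $M_n\to A^{**}$ with positive elements of $M_n(A)^{**}$ together with Kaplansky density and a Hahn--Banach/Mazur convexity argument; this is precisely \cite[Proposition 2.3.8]{BO}, which the paper cites. One small correction: you cannot get norm convergence of the full maps $M_n\to A$ to $\Theta$, since $\Theta$ is not $A$-valued; the Mazur argument is applied to the tuples $(\psi(x_j))_j$ for a basis $(x_j)$ of $E$, so only the restriction to $E$ is norm-approximated --- which is all you need.

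The converse direction has a genuine gap, located exactly at the step you flagged as delicate. The claim that for $P$ large the compression $a\mapsto PaP$ is, up to $\e$, a unital complete order isomorphism of a fixed finite-dimensional $G\subseteq\B(H)$ onto $PGP\subseteq M_N$ is false in general. For each \emph{fixed} matrix level $k$ one does have $\|(c_P|_G)^{-1}\|_k\to 1$ as $P\to 1$ strongly (by compactness of the unit ball of $M_k(G)$), but this is not uniform in $k$, and $\|(c_P|_G)^{-1}\|_{\cb}$ need not tend to $1$. If it did, $G$ would be $1$-exact with the compressions as witnesses; since $\B(H)$ contains non-$1$-exact finite-dimensional operator systems (the Junge--Pisier phenomenon \cite{jungepisier} on which this whole paper rests), the compression trick cannot circumvent the obstruction --- it \emph{is} the obstruction. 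Concretely, if your u.c.p.\ map $\lambda:PGP\to A$ with $\lambda(PaP)\approx a$ existed with error tending to $0$, then $\id_G$ would approximately factor through $M_N$ by completely contractive maps, forcing $G$ to be $1$-exact; so the construction already fails for $A=\B(H)$ (which is injective, hence both f.a.i.\ and WEP) applied to a non-exact $G$. This is why the paper does not build the weak expectation by hand: it observes that f.a.i.\ yields, by a point-weak$^*$ cluster point argument, u.c.p.\ extensions valued in $A^{**}$ (Choi--Effros ``finite almost injectivity'') and then invokes \cite[Theorem 6.1 and Corollary 6.3]{CE} for the equivalence of that property with WEP; that citation carries the real weight of the direction you attempted directly.
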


\begin{proof}
The proof of this proposition is almost entirely contained in the literature; here we just connect the dots.  Let $A$ denote a unital C$^*$-algebra.  Following \cite{CE}, we say that $A$ is \emph{almost injective} if, for any finite-dimensional operator systems $E\subseteq F$ and any u.c.p.\ map $\phi:E\to A$, there is a u.c.p.\ $\psi:F\to A^{**}$ extending $\phi$.  If we only require this extension property to hold for finite-dimensional matricial operator systems, then $A$ is said to be \emph{finitely almost injective}.  By \cite[Theorem 6.1 and Corollary 6.3]{CE}, we have that $A$ has WEP if and only if $A$ is finitely almost injective.

It thus remains to show that $A$ is f.a.i.\ if and only if it is finitely almost injective.  For the converse, by the duality between c.p.\ maps from $M_n$ to $A^{**}$ and positive operators in $M_n(A^{**})$, given u.c.p.\ $\phi:E\to A$ and a u.c.p. extension $\psi:M_n \to A^{**}$, we can norm approximate $\psi$ by u.c.p.\ maps $\psi': M_n\to A$; see the proof of \cite[Proposition 2.3.8]{BO} for the full details.
\end{proof}

\begin{cor} If $A$ is a C$^*$-algebra, then $A$ satisfies FAE if and only if $A^\u$ has WEP for some (equiv. for all) nonprincipal ultrafilter(s) $\u$ on $\n$.
\end{cor}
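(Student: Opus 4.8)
The plan is simply to chain together the two results that immediately precede this corollary: the opening Remark following the definition of FAE, which reduces FAE of a single operator system to f.a.i.\ of one of its ultrapowers, together with the Proposition identifying f.a.i.\ with WEP for unital C$^*$-algebras. There is no substantive new content to prove; the work is entirely bookkeeping.

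First I would recall that, by the Remark following the definition of FAE (applied to the constant sequence $X = A$), the C$^*$-algebra $A$ satisfies FAE if and only if $A^\u$ is f.a.i.\ for some, equivalently every, nonprincipal ultrafilter $\u$ on $\n$. Here FAE was defined to mean that $A^\u$ is f.a.i.\ for all $\u$, and the nontrivial content — that f.a.i.\ for \emph{some} $\u$ already forces it for \emph{all} $\u$ — is exactly what that Remark establishes via its elementary-embedding and $\aleph_1$-saturation argument. Next, since $A^\u$ is again a (unital) C$^*$-algebra, the Proposition applies verbatim to $A^\u$, giving that $A^\u$ is f.a.i.\ if and only if $A^\u$ has WEP. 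Combining the two equivalences yields the desired statement: $A$ satisfies FAE if and only if $A^\u$ has WEP for some (equivalently, every) nonprincipal ultrafilter $\u$ on $\n$. The ``some iff all'' clause for WEP is inherited directly from the corresponding clause for f.a.i.: because WEP and f.a.i.\ coincide on each $A^\u$, and f.a.i.\ of $A^\u$ holds for one $\u$ if and only if it holds for every $\u$, the same dichotomy transfers to WEP without any additional argument.

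The one point requiring a word of care — and the only (minor) obstacle — is the unitality hypothesis in the Proposition, which is stated for \emph{unital} C$^*$-algebras, whereas the corollary is phrased for an arbitrary C$^*$-algebra $A$. If $A$ is unital then $A^\u$ is unital and the Proposition applies directly, so I would phrase the argument under that assumption, as the rest of the paper works with unital C$^*$-algebras and operator systems (recall that f.a.i.\ is a condition about u.c.p.\ maps into $A^\u$, which presupposes a unit). Should one wish to cover the genuinely non-unital case, it suffices to reduce to the unital setting by the standard observation that WEP and finite approximate injectivity are unaffected by adjoining a unit, but I expect the intended reading to be the unital one, in which case no such reduction is needed.
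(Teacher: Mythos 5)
Your proposal is correct and is exactly the intended argument: the paper gives no explicit proof for this corollary precisely because it follows immediately by chaining the Remark (FAE of $A$ is equivalent to $A^\u$ being f.a.i.\ for some, hence all, $\u$) with the Proposition identifying f.a.i.\ with WEP for unital C$^*$-algebras. Your side remark about unitality is a reasonable reading of the paper's conventions and does not change the substance.
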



\begin{cor}\label{nowep}
$\B(H)^\u$ and $\prod_\u M_n$ do not have the WEP.
\end{cor}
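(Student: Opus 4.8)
The plan is to read off both non-examples directly from the equivalences already assembled in this section, so that the work lies entirely in chaining together the correct statements rather than in producing any new estimate. The governing fact is the Proposition that, for a \emph{unital} C$^*$-algebra, finite approximate injectivity coincides with the WEP. Since $\prod_\u M_n$ and $\B(H)^\u$ are both unital C$^*$-algebras (the unit being $(I_n)^\bullet$ in the first case), it therefore suffices to show that neither of them is f.a.i.

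For $\prod_\u M_n$, I would invoke Theorem \ref{fae-ot}, which asserts that $(M_n)$ is not FAE, together with the Remark of this section stating that $(M_n)$ is FAE if and only if $\prod_\u M_n$ is f.a.i.\ for some nonprincipal ultrafilter $\u$ on $\n$. Taking contrapositives, the failure of FAE for $(M_n)$ means precisely that $\prod_\u M_n$ fails to be f.a.i.\ for every nonprincipal ultrafilter $\u$. Applying the Proposition then yields that $\prod_\u M_n$ lacks the WEP, as claimed.

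For $\B(H)^\u$, I would use Corollary \ref{prod-not-fi}, which states that $\B(H)$ is not FAE, in combination with the Corollary identifying FAE of a C$^*$-algebra $A$ with the assertion that $A^\u$ has the WEP for some (equivalently, for all) nonprincipal ultrafilter(s) $\u$. Since $\B(H)$ is not FAE, that equivalence forces $\B(H)^\u$ to fail the WEP for every such $\u$.

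The only point requiring attention---hardly an obstacle---is the interplay between the quantifiers ``for some'' and ``for all'' over ultrafilters: one wants the conclusion to hold for the arbitrary $\u$ named in the statement and not merely for a single convenient choice. This is already taken care of, since both the Remark on $(M_n)$ and the Corollary relating FAE to the WEP are phrased as equivalences in which the ``some''/``all'' quantifiers are interchangeable, so the failure of FAE propagates uniformly to every nonprincipal ultrafilter.
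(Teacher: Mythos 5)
Your proof is correct and is exactly the chain of implications the paper intends (the corollary is stated without proof precisely because it follows by combining Theorem \ref{fae-ot}, Corollary \ref{prod-not-fi}, the proposition identifying f.a.i.\ with WEP for unital C$^*$-algebras, and the some/all remarks on ultrafilters). Your attention to the quantifier over ultrafilters is the right point to flag, and it is handled correctly.
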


We thank the anonymous referee for pointing out the following corollary.

\begin{cor}\label{subhom} A C$^*$-algebra is FAE if and only if it is subhomogenous.
\end{cor}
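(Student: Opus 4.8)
The plan is to deduce both implications from the preceding corollary---that $A$ is FAE if and only if $A^\u$ has WEP---together with Corollary \ref{nowep} and the fact, recorded above, that any nonprincipal ultraproduct of matrix algebras embeds into any other with a conditional expectation. For the implication that a subhomogeneous $A$ is FAE, I would use that $A$ is $n$-subhomogeneous exactly when it satisfies the standard polynomial identity $S_{2n}\equiv 0$ (Amitsur--Levitzki). Since this is a universal condition, $\sup_{\|x_i\|\le 1}\|S_{2n}(x_1,\dots,x_{2n})\|=0$, it passes to the ultrapower, so $A^\u$ is again $n$-subhomogeneous, hence type I, hence nuclear, hence has WEP; the preceding corollary then gives that $A$ is FAE.

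The substance is the converse, which I would establish contrapositively: if $A$ is not subhomogeneous, I will exhibit inside $A^\u$ a u.c.p.\ copy of an ultraproduct of matrix algebras of growing size that cannot have WEP. Since $A$ is not subhomogeneous, for each $k$ there is an irreducible representation $\pi_k\colon A\to \B(H_k)$ with $\dim H_k\ge k$; fixing a rank-$k$ projection $p_k$ with range spanned by an orthonormal set $\xi_1,\dots,\xi_k$, and writing $V_k\colon \mathbb C^k\to H_k$ for the associated isometry, compression yields unital completely positive maps $\theta_k:=V_k^*\pi_k(\cdot)V_k\colon A\to M_k$. Assembling these gives a u.c.p.\ map $\Theta:=(\theta_k)^\bullet\colon A^\u\to \prod_\u M_k$, and the heart of the argument is to construct a u.c.p.\ section $s\colon \prod_\u M_k\to A^\u$ with $\Theta\circ s=\mathrm{id}$.

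To build $s$ I would work factorwise and use Kaplansky density. Fix a unit vector $w$ and let $T\colon H_k\to H_k^{(k)}$ be the partial isometry with $T\xi_p=w\otimes e_p$; since $\pi_k(A)$ is irreducible, hence strongly dense in $\B(H_k)$, Kaplansky density lets me choose a column $Y_k=[y_1;\dots;y_k]$ over $A$ with $\|Y_k\|\le 1$ (equivalently $g_k:=\sum_i y_i^*y_i\le 1$) whose amplification sends each $\xi_p$ to within $\epsilon_k$ of $w\otimes e_p$, with $\epsilon_k\to 0$. Setting $\eta_i:=y_i^*$, the completely positive map $s_k^{(0)}(x):=\sum_{i,j}x_{ij}\eta_i\eta_j^*$ satisfies $s_k^{(0)}(1)=g_k\le 1$ and $\theta_k\circ s_k^{(0)}\to \mathrm{id}$; unitalizing by $s_k(x):=s_k^{(0)}(x)+\tau(x)(1-g_k)$ for a state $\tau$ on $M_k$ produces a genuine u.c.p.\ map with $\theta_k\circ s_k\to \mathrm{id}$ as well. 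Then $s:=(s_k)^\bullet$ is u.c.p.\ with $\Theta\circ s=\mathrm{id}$, exhibiting $\prod_\u M_k$ as a u.c.p.\ retract of $A^\u$. If $A$ were FAE, so that $A^\u$ is f.a.i., this retraction would force $\prod_\u M_k$ to be f.a.i.\ (given u.c.p.\ $\phi\colon E\to \prod_\u M_k$, compose with $s$, extend $s\circ\phi$ using f.a.i.\ of $A^\u$, and push the extension back down by $\Theta$), and hence to have WEP; but then $\prod_\u M_n$, which sits in $\prod_\u M_k$ with a conditional expectation, would inherit WEP, contradicting Corollary \ref{nowep}.

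I expect the main obstacle to be precisely the construction of the u.c.p.\ section. The naive approach---locating matrix subalgebras of $A^\u$ that carry conditional expectations---is hopeless in general, since $A^\u$ can be projectionless (for instance when $A=C^*_r(\mathbb F_\infty)$) and so contain no copy of $M_2$ at all, even though $A$ is very far from subhomogeneous. The compression picture sidesteps this, but the delicate point is that a bare appeal to Kadison transitivity produces the $\eta_i$ with the correct action on the $\xi_p$ while leaving $g_k=\sum_i\eta_i\eta_i^*$ uncontrolled, possibly of large norm, which would obstruct unitalization; it is exactly the use of Kaplansky density to keep $Y_k$ a contraction, so that $g_k\le 1$ and $1-g_k\ge 0$, that makes the unitalized $s_k$ completely positive. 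Checking that these approximate identities ($\theta_k\circ s_k\to \mathrm{id}$ and $s_k(1)=1$) assemble in the ultrapower into an exact u.c.p.\ section, and that the retraction argument transfers f.a.i.\ as claimed, are the remaining points that need care.
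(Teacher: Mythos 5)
Your argument is correct and shares the paper's overall skeleton: subhomogeneity passes to the ultrapower (so $A^\u$ is nuclear, hence WEP, hence $A$ is FAE), while a non-subhomogeneous $A$ exhibits $\prod_\u M_n$ as a u.c.p.\ retract of $A^\u$, contradicting Corollary \ref{nowep}. Where you genuinely diverge is in the proof of the key lemma, namely the existence of a u.c.p.\ embedding $\prod_\u M_n\hookrightarrow A^\u$ admitting a conditional expectation. The paper takes the compressions $\rho_n\circ\pi$ as the expectation and manufactures the section abstractly: Kadison transitivity shows the compression is approximately onto with norm control, CP-stability of $M_n$ upgrades this to a u.c.p.\ map $\psi_n\colon M_n\to\pi(A)$ with $\rho_n\circ\psi_n\approx\id$, and the Choi--Effros lifting theorem lifts $\psi_n$ to a u.c.p.\ map into $A$ itself. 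You instead write the section down explicitly: a Kaplansky-density-normalized contractive column $Y_k$ gives the manifestly completely positive map $x\mapsto R(x\otimes 1)R^*$ with $R=Y_k^*$, whose defect from unitality $1-Y_k^*Y_k$ is positive precisely because the column is contractive, so it can be repaired with a state. This buys independence from both CP-stability and Choi--Effros (your only lifting is pulling a contractive column back through the quotient $M_{k,1}(A)\to M_{k,1}(\pi_k(A))$, which is elementary since C$^*$-quotient maps are surjective on closed unit balls), at the cost of some bookkeeping you rightly flag: since $\dim M_k\to\infty$, entrywise estimates must be summed, e.g.\ $\|\theta_k\circ s_k^{(0)}-\id_{M_k}\|=O(\sqrt{k}\,\epsilon_k)$ and $\|\theta_k(1-g_k)\|\le\operatorname{tr}\bigl(\theta_k(1-g_k)\bigr)=O(k\epsilon_k)$, so one should take $\epsilon_k=o(1/k)$ to make $(s_k)^\bullet$ an exact section in the ultraproduct. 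The forward direction matches the paper's one-line assertion, with the Amitsur--Levitzki identity supplying the (worth stating) reason that subhomogeneity is a universally axiomatizable property preserved by ultrapowers.
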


We will need the following fact communicated by the referee without proof, which we will now undertake to provide.

\begin{lemma} If $A$ is a unital C$^*$-algebra which is not subhomogenous, then there is a u.c.p.\ embedding $\prod_\u M_n\hookrightarrow A^\u$ with conditional expectation.
\end{lemma}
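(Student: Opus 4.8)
The plan is to prove that any unital C$^*$-algebra $A$ that is not subhomogeneous admits a unital copy of $\prod_\u M_n$ with conditional expectation. The key structural fact I would use is that subhomogeneity is exactly the failure of having irreducible representations of arbitrarily large (finite) dimension, or representations of infinite dimension. So if $A$ is not subhomogeneous, then for every $k$, $A$ has an irreducible representation $\pi_k:A\to \B(H_k)$ with $\dim H_k \geq n_k$ for some sequence $n_k\to\infty$ (if some representation is infinite-dimensional, we extract finite-dimensional compressions of growing size). By Glimm's lemma (or the standard Kadison transitivity / excision-of-pure-states machinery), an irreducible representation of dimension at least $n$ yields, for each finite subset and each $\e$, a u.c.p.\ map $M_n\to A$ that is approximately multiplicative and approximately isometric on the relevant finite-dimensional data, compressing onto a near-matrix subalgebra.

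The concrete steps I would carry out are as follows. First, fix an increasing sequence $n_1<n_2<\cdots$ and, using the non-subhomogeneity, produce for each $q$ a u.c.p.\ map $\theta_q:M_{n_q}\to A$ together with a u.c.p.\ map $\rho_q:A\to M_{n_q}$ (the latter coming from compressing an irreducible representation to an $n_q$-dimensional subspace) such that $\rho_q\circ\theta_q \to \id_{M_{n_q}}$ pointwise as the representations improve, and $\theta_q$ is asymptotically multiplicative and isometric. The standard reference for producing such near-inclusions of matrix algebras into a non-subhomogeneous C$^*$-algebra is Glimm's lemma; I would cite it rather than reprove it. Second, I would assemble these into the ultrapower: define $\Theta:=(\theta_n)^\bullet:\prod_\u M_n\to A^\u$ and $\mathrm{P}:=(\rho_n)^\bullet:A^\u\to\prod_\u M_n$, using the notation established earlier in the excerpt for ultraproduct maps. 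The pointwise approximate multiplicativity and isometry pass to exact multiplicativity and isometry in the ultrapower via \L{}o\'s's theorem / saturation, so $\Theta$ becomes a unital $*$-embedding of $\prod_\u M_n$ into $A^\u$. Third, I would check that $\mathrm{P}\circ\Theta=\id_{\prod_\u M_n}$ in the limit, so that $E:=\Theta\circ\mathrm{P}:A^\u\to A^\u$ is a u.c.p.\ idempotent onto the image of $\Theta$, i.e.\ a conditional expectation.

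The main obstacle, and the step requiring the most care, will be arranging that the matrix sizes can be taken to grow along a single sequence indexed by $\n$ so that the ultraproduct $\prod_\u M_n$ (over \emph{all} $n$, not a subsequence) is the object embedded. This is not automatic: non-subhomogeneity only directly gives irreducible representations of arbitrarily large dimension, which a priori supplies matrix algebras $M_{n_q}$ along some sparse subsequence $n_q$. However, since any nonprincipal ultraproduct of matrix algebras embeds into any other with conditional expectation (this is exactly the fact invoked in the earlier remark that $(M_n)$ is FAE iff $\prod_\u M_n$ is f.a.i.\ for \emph{some} $\u$), it suffices to embed $\prod_{\u'} M_{n_q}$ for some ultrafilter $\u'$ on the subsequence; one then composes with the matricial ultraproduct embedding. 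So the resolution is to first embed the subsequential matricial ultraproduct into $A^\u$ with expectation, then precompose with a conditional-expectation embedding $\prod_\u M_n\hookrightarrow \prod_{\u'} M_{n_q}$. The verification that the composite of two conditional-expectation-admitting unital embeddings again admits a conditional expectation is routine, since conditional expectations compose.

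Finally, I would note that the u.c.p.\ maps produced by Glimm's lemma are only approximately multiplicative and isometric on finite sets, so the passage to an honest $*$-homomorphism genuinely relies on the $\aleph_1$-saturation of the ultrapower $A^\u$ and the ultraproduct $\prod_\u M_n$; this is where the model-theoretic framework of the paper does real work, converting the approximate, finitary near-inclusions into an exact embedding admitting an exact conditional expectation.
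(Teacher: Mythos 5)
There is a genuine gap at the heart of your construction: you ask non-subhomogeneity to deliver u.c.p.\ maps $\theta_q:M_{n_q}\to A$ that are \emph{approximately multiplicative and approximately isometric}, so that in the ultrapower $\Theta$ becomes a unital $*$-embedding of a matricial ultraproduct into $A^\u$. No such maps exist in general. Take $A=C^*_r(\mathbb F_2)$: it is not subhomogeneous (it has an infinite-dimensional irreducible representation), but it is projectionless, and since approximate projections in a C$^*$-algebra can be perturbed to exact ones, $A^\u$ is projectionless as well; hence $M_n$ admits no unital $*$-embedding into $A^\u$ for $n\geq 2$, so no sequence of asymptotically multiplicative unital maps $M_n\to A$ can exist. (A type I counterexample is $K(H)+\mathbb{C}1$: pushing a unital copy of $M_n$ in the ultrapower down to the ultrapower of the Calkin-type quotient $\mathbb{C}$ gives a unital $*$-homomorphism $M_n\to\mathbb{C}$, which is absurd for $n\geq 2$.) Glimm's lemma does not rescue this: for a merely non-subhomogeneous algebra the matrix algebras arise as \emph{subquotients}, not as approximate subalgebras. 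This is exactly why the lemma claims only a u.c.p.\ (complete order) embedding with conditional expectation, and why the corollary it feeds only needs the image of $\prod_\u M_n$ to be the range of a conditional expectation, not a C$^*$-subalgebra.

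The repair is to drop multiplicativity entirely, which brings you to the paper's argument. Compress a $\geq n$-dimensional irreducible representation $\pi$ to an $n$-dimensional subspace to get a u.c.p.\ map $\rho_n\circ\pi:A\to M_n$; Kadison transitivity makes this map approximately norm-surjective, CP-stability of $M_n$ upgrades an almost completely contractive unital linear section to a u.c.p.\ map $\psi_n:M_n\to\pi(A)$ with $\|\rho_n\circ\psi_n-\id_{M_n}\|<1/n$, and Choi--Effros lifts $\psi_n$ through $\pi$ to $\tilde\psi_n:M_n\to A$. Then $\psi:=(\tilde\psi_n)^\bullet$ and $\rho:=(\rho_n\circ\pi)^\bullet$ satisfy $\rho\circ\psi=\id$ on the nose (plain \L{}o\'s, no saturation needed), so $\psi$ is a complete order embedding and $\psi\circ\rho$ is the conditional expectation. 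Two smaller remarks: your subsequence worry is a non-issue, since a single irreducible representation of dimension $\geq n$ can be compressed to exactly $n$ dimensions, so one gets maps for \emph{every} $n$ directly; your fallback via conditional-expectation embeddings between matricial ultraproducts is nevertheless valid. The overall scaffolding (assembling $(\theta_q)$, $(\rho_q)$ into $\Theta$, $\mathrm{P}$ and checking $\mathrm{P}\circ\Theta=\id$) does match the paper; only the multiplicativity claim must go.
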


\begin{proof} Let $n$ be arbitrary. Since $A$ is not subhomogenous, it admits an irreducible representation $\pi: A\to \mathcal B(H)$ with $\dim(H)\geq n$. Let $H_n$ be an $n$-dimensional subspace of $H$ and let $\rho_n: \mathcal B(H)\to \mathcal B(H_n)$ be the canonical conditional expectation given by restriction. By Kadison's transitivity theorem, for any $\e>0$ and any operator $x\in \mathcal B(H_n)\cong M_n$, there is $a\in A$ with $\|\pi(a)\|< \|x\|+\e$ so that $\pi(a)|_{H_n} = x$. It thus follows by the CP-stability of $M_n$ that there is a u.c.p.\ map $\psi_n: M_n\to \pi(A)$ so that $\|\rho_n\circ\psi_n - \id_{M_n}\|< 1/n$. By the Choi-Effros lifting theorem, there is a u.c.p. map $\tilde{\psi}_n:M_n\to A$ such that $\pi\circ \tilde{\psi}_n=\psi_n$.  Let $\psi:=(\tilde{\psi}_n)^\bullet:\prod_\u M_n\to A^\u$.  It follows immediately that $\psi$ is a u.c.p.\ embedding and that the u.c.p.\ map $\rho:=(\rho_n\circ \pi)^\bullet:A^\u\to \prod_\u M_n$ satisfies $\rho\circ \psi=\id_{\prod_\om M_n}$.
\end{proof}

\begin{proof}[Proof of Corollary \ref{subhom}] Let $A$ be a C$^*$-algebra.  First suppose that $A$ is subhomogenous.  Then $A^\u$ is also subhomogeneous, whence nuclear and thus has the WEP. On the other hand, if $A$ is not subhomogenous, then by the previous lemma there is an operator system embedding $\prod_\u M_n\hookrightarrow A^\u$ whose image is the image of a conditional expectation on $A^\u$. Thus if $A^\u$ did have the WEP, it would follow that so would $\prod_\u M_n$, contradicting Corollary \ref{nowep}.
\end{proof}

\begin{nrmk}
The previous corollary has some interesting consequences.  First, it demonstrates that a subalgebra of an FAE C$^*$-algebra is also FAE, a statement which is not immediate from the definition.  Second, we see that for ultrapowers of C$^*$-algebras, nuclearity and WEP are equivalent (and are equivalent to subhomogeneity).  Finally, if $A$ is a QWEP C$^*$-algebra (quotient of a C$^*$-algebra with WEP), then $A^\u$ has Kirchberg's local lifting property, or LLP (see Section \ref{LLP}), if and only if has WEP (if and only if it is nuclear), for QWEP and LLP together imply WEP.
\end{nrmk}







\subsection{Ozawa's argument} Before we had proven that the class of 1-exact operator systems was not uniformly definable by a sequence of types, we had asked Taka Ozawa whether or not $\B(H)$ satisfied FAE. He outlined a proof for us that $\B(H)$ does not satisfy FAE.  Although our proof above is technologically simpler, Ozawa's proof is quite interesting, so we include it here.  We thank him for his permission to include his argument.

There are a few technical preliminaries to dispense with first. Let $R_n$ and $C_n$ be the $n$-dimensional row and column spaces, respectively. Let $RC_n$ be the space $\span\{\delta_i := e_{1i} \oplus e_{i1}\}\subset R_n\oplus C_n \subset M_n\oplus M_n$.\\

The following lemma is essentially contained in Remark 1.2 in \cite{HP}. See also section 2 of \cite{Oz}.

\begin{fact}[Haagerup--Pisier]\label{otherhp}  Let $\phi: RC_n\to X$ be a (completely bounded) map. Suppose $\phi$ extends to a map $\phi': R_n\oplus C_n\to X$ with $\|\phi'\|_{\cb}\leq C$. Setting $x_i := \phi(\delta_i)$, we then have that there exist $a_1,\dotsc,a_n, b_1\dotsc, b_n\in X$ so that $\|\sum a_ia_i^*\|^{1/2},\ \|\sum b_i^* b_i\|^{1/2}\leq C$ and $x_i = a_i + b_i$.
\end{fact}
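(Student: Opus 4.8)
The plan is to read the decomposition $x_i = a_i + b_i$ directly off the extension $\phi'$ and then bound the two pieces using the standard formulas for the completely bounded norm of a map out of a row, respectively column, space.

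First I would restrict $\phi'$ to the two summands. Let $\iota_R\colon R_n\hookrightarrow R_n\oplus C_n$ and $\iota_C\colon C_n\hookrightarrow R_n\oplus C_n$ be the canonical inclusions $r\mapsto r\oplus 0$ and $c\mapsto 0\oplus c$, and set $a_i:=\phi'(e_{1i}\oplus 0)$ and $b_i:=\phi'(0\oplus e_{i1})$. Since $\delta_i=(e_{1i}\oplus 0)+(0\oplus e_{i1})$ and $\phi'$ extends $\phi$, linearity gives $x_i=\phi(\delta_i)=\phi'(\delta_i)=a_i+b_i$, which is the desired decomposition.

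It remains to bound the two pieces. Since $R_n\oplus C_n$ carries its structure as a subspace of the C$^*$-direct sum $M_n\oplus M_n$, it is an $\ell^\infty$-direct sum, so each summand inclusion $\iota_R$ and $\iota_C$ is a complete isometry, hence completely contractive; consequently $\|\phi'\circ\iota_R\|_{\cb}\le\|\phi'\|_{\cb}\le C$ and $\|\phi'\circ\iota_C\|_{\cb}\le\|\phi'\|_{\cb}\le C$. Now I would invoke the standard description of the operator space structures on $R_n$ and $C_n$: for any linear $u\colon R_n\to X$ one has $\|u\|_{\cb}=\|\sum_i u(e_{1i})u(e_{1i})^*\|^{1/2}$, and for any linear $v\colon C_n\to X$ one has $\|v\|_{\cb}=\|\sum_i v(e_{i1})^*v(e_{i1})\|^{1/2}$. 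Applying these to $u=\phi'\circ\iota_R$ and $v=\phi'\circ\iota_C$ yields $\|\sum_i a_ia_i^*\|^{1/2}=\|\phi'\circ\iota_R\|_{\cb}\le C$ and $\|\sum_i b_i^*b_i\|^{1/2}=\|\phi'\circ\iota_C\|_{\cb}\le C$, which finishes the proof.

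There is no genuinely hard step here; the only points requiring care are conventions — that the row-space formula is paired with $\sum_i a_ia_i^*$ and the column-space formula with $\sum_i b_i^*b_i$ (and not the reverse), and that the summand inclusions into the $\ell^\infty$-direct sum are complete isometries rather than merely contractions. Both are standard facts about row and column operator spaces (see \cite{Oz} and the references therein).
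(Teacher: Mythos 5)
Your proof is correct and takes exactly the same route as the paper: the paper's one-line proof sets $a_i := \phi'(e_{1i}\oplus 0)$ and $b_i := \phi'(0\oplus e_{i1})$ and asserts the conditions are satisfied, which is precisely your decomposition. You have simply supplied the omitted verification (complete isometry of the summand inclusions and the standard $\cb$-norm formulas for maps out of $R_n$ and $C_n$), and both of those details are stated correctly.
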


\begin{proof} Setting $a_i := \phi'(e_{1i}\oplus 0)$ and $b_i := \phi'(0\oplus e_{i1})$, we see that the the required conditions are satisfied.
\end{proof}

\begin{fact}[Haagerup--Pisier]\label{hp} Let $u_1,\dotsc, u_n\in C_r^*(\bF_n)$ be the unitaries corresponding to the standard free generators of $\bF_n$, and let $E_n$ be the operator space spanned by $u_1,\ldots,u_n$. Let $\theta: RC_n\to E_n$ be defined by $\theta(\delta_i) := u_i$. Then $\|\theta\|_{\cb}\leq 2$.
\end{fact}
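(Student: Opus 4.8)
The plan is to compute the completely bounded norm directly from the definition $\|\theta\|_{\cb} = \sup_{k}\|\theta\otimes\id_{M_k}\|$, thereby reducing the statement to an operator-coefficient norm inequality. A typical element of $RC_n\otimes M_k$ has the form $x = \sum_i \delta_i\otimes a_i$ with $a_i\in M_k$, and since $\delta_i = e_{1i}\oplus e_{i1}$ lives in the $C^*$-direct sum $R_n\oplus C_n\subseteq M_n\oplus M_n$, we have $x = \bigl(\sum_i e_{1i}\otimes a_i\bigr)\oplus\bigl(\sum_i e_{i1}\otimes a_i\bigr)$, whose norm is the maximum of the two summands. The first summand is a single block-row and the second a single block-column, so that
\[
\|x\|_{RC_n\otimes M_k} = \max\left(\left\|\sum_i a_i a_i^*\right\|^{1/2},\ \left\|\sum_i a_i^* a_i\right\|^{1/2}\right).
\]
Under $\theta\otimes\id_{M_k}$ this element is sent to $\sum_i u_i\otimes a_i\in E_n\otimes M_k$, so it suffices to establish the operator-valued Haagerup inequality
\[
\left\|\sum_i u_i\otimes a_i\right\| \leq 2\max\left(\left\|\sum_i a_i a_i^*\right\|^{1/2},\ \left\|\sum_i a_i^* a_i\right\|^{1/2}\right).
\]

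To prove this inequality I would realize $C_r^*(\bF_n)$ via the left regular representation $\lambda$ on $\ell^2(\bF_n)$ and exploit the word-length grading $\ell^2(\bF_n) = \bigoplus_{d\geq 0}\mathcal H_d$, where $\mathcal H_d$ is spanned by the reduced words of length $d$. Each generator acts by left multiplication, which changes the length of a reduced word by exactly $\pm 1$; accordingly I decompose $\lambda(u_i) = \lambda(u_i)^+ + \lambda(u_i)^-$ into its length-raising (``creation'') and length-lowering (``annihilation'') parts. Setting $T := \sum_i \lambda(u_i)\otimes a_i$ and $T^{\pm} := \sum_i \lambda(u_i)^{\pm}\otimes a_i$ on $\ell^2(\bF_n)\otimes\bC^k$, a short combinatorial check on reduced words yields the near-orthogonality relations $(\lambda(u_i)^+)^*\lambda(u_j)^+ = \delta_{ij}P_i$ and $\lambda(u_i)^-(\lambda(u_j)^-)^* = \delta_{ij}Q_i$, where $P_i$ and $Q_i$ are the projections onto the words not beginning with $u_i^{-1}$, respectively $u_i$. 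Consequently, using $0\leq P_i,Q_i\leq 1$,
\[
\|T^+\|^2 = \left\|\sum_i P_i\otimes a_i^* a_i\right\| \leq \left\|\sum_i a_i^* a_i\right\|, \qquad \|T^-\|^2 = \left\|\sum_i Q_i\otimes a_i a_i^*\right\| \leq \left\|\sum_i a_i a_i^*\right\|.
\]
The triangle inequality $\|T\|\leq\|T^+\|+\|T^-\|$ then gives the displayed Haagerup inequality with constant $2$, and combining this with the norm computation on $RC_n\otimes M_k$ yields $\|\theta\otimes\id_{M_k}\|\leq 2$ for every $k$, hence $\|\theta\|_{\cb}\leq 2$.

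The main obstacle is the verification of the orthogonality relations for the creation/annihilation parts, which is the only genuine content of the argument: it rests on the fact that in a free group a reduced word $u_j w$ begins with $u_i$ precisely when $i=j$. Everything else is bookkeeping, namely the reduction to matrix coefficients, the elementary row/column norm identities, and the positivity estimate $P_i\otimes a_i^* a_i \leq 1\otimes a_i^* a_i$. One should take minor care that the grading decomposition is compatible with tensoring by $M_k$, i.e.\ that $T^{\pm}$ really are the length-shifting parts of $T$ on $\ell^2(\bF_n)\otimes\bC^k$; this is immediate since the $a_i$ act only on the $\bC^k$ factor and therefore commute with the length grading.
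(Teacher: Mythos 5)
Your proof is correct. Note, however, that the paper does not actually prove this fact---it simply cites Proposition~1.3 of Haagerup--Pisier---so what you have written is a self-contained reconstruction of the cited result. Your argument (reduce to the operator-coefficient inequality $\|\sum_i \lambda(u_i)\otimes a_i\|\leq \|\sum_i a_i^*a_i\|^{1/2}+\|\sum_i a_ia_i^*\|^{1/2}$ via the row/column norm identities for $RC_n$, then prove it by splitting each $\lambda(u_i)$ into its length-raising and length-lowering parts and checking the orthogonality relations on reduced words) is exactly the classical Haagerup-inequality argument underlying the reference, and all the individual steps---the computation of $\|x\|_{RC_n\otimes M_k}$ as a maximum of row and column norms, the relations $(\lambda(u_i)^+)^*\lambda(u_j)^+=\delta_{ij}P_i$ and $\lambda(u_i)^-(\lambda(u_j)^-)^*=\delta_{ij}Q_i$, and the positivity estimate $\sum_i P_i\otimes a_i^*a_i\leq 1\otimes\sum_i a_i^*a_i$---check out.
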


\begin{proof} See Proposition 1.3 in \cite{HP}.
\end{proof}

\begin{lemma} Assume $\B(H)$ satisfies FAE. Let $E\subset F\subset M_n$ be an inclusion of  operator spaces. Then for any map $\phi: E\to \B(H)^\u$ with $\|\phi\|_{\cb}\leq C$, there is an extension $\phi':F\to \B(H)^\u$ with $\|\phi'\|_{\cb}\leq C$.
\end{lemma}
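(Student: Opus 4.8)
The goal is to upgrade the finitary, quantitative extension property guaranteed by FAE (as packaged in Lemma \ref{finite-fae}) into a \emph{clean} completely bounded extension with \emph{exactly} the same cb-norm bound $C$, with no loss. The plan is to prove this by a standard ``successive approximation'' or telescoping argument, combined with passage to the ultrapower to absorb all the errors.

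First I would invoke Lemma \ref{finite-fae} (or rather its cb-variant, Lemma \ref{cb-fae}, and the resulting function $\sigma$) to produce, for each tolerance $\eta>0$, an extension $\phi_\eta':F\to \B(H)^\u$ that agrees with $\phi$ on $E$ up to $\eta$ and whose cb-norm is controlled. The difficulty is that the finitary hypothesis only gives approximate agreement on $E$ together with a cb-norm bound slightly worse than the optimal $C$; a single application cannot yield both exact agreement on $E$ and the sharp bound $\|\phi'\|_{\cb}\leq C$. To get exact agreement on $E$, I would run an iterative scheme: having produced an approximate extension agreeing with $\phi$ on $E$ to within $\eta_1$, I would feed the \emph{error} $\phi-\phi_1'|_E:E\to \B(H)^\u$ (suitably rescaled so that its cb-norm is small) back into the extension machinery, obtaining a correction term, and sum the resulting series. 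Because $\B(H)^\u$ is a dual space (an ultrapower of $\B(H)$), and in particular is complete, such a geometrically convergent series of corrections will converge to an honest extension $\phi'$ with $\phi'|_E=\phi$ exactly.

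The main obstacle I anticipate is keeping the cb-norm bound from degrading to $C+\text{(something)}$ across the infinitely many correction steps: each correction contributes to $\|\phi'\|_{\cb}$, and a naive telescoping would give a bound like $C\cdot\sum_k \eta_k$ or $C+\sum_k(\text{error})$ rather than the sharp $C$. To defeat this I would exploit the saturation of the ultrapower $\B(H)^\u$: rather than literally summing corrections (which accrues error), I would set up, for each $k$, a finitary extension problem whose solution is an extension $\phi_k'$ with $\|\phi_k'\|_{\cb}\leq C+1/k$ and $\|\phi_k'|_E-\phi\|\leq 1/k$, and then use $\aleph_1$-saturation of $\B(H)^\u$ to realize the limiting type asserting the existence of a map $\phi':F\to\B(H)^\u$ satisfying $\phi'|_E=\phi$ exactly and $\|\phi'\|_{\cb}\leq C$. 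This is exactly the mechanism used in the converse direction of Lemma \ref{finite-fae}, where $\aleph_1$-saturation was invoked to pass from approximate finitary data to an exact u.c.p.\ map; here one repeats that argument in the completely bounded (rather than unital self-adjoint) setting.

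Concretely, the type to be realized is $\{\,d(\phi'(x),\phi(x))=0 : x\in E\,\}\cup\{\,\|\phi'\|_{\cb}\leq C\,\}$, expressed over the separable parameters $\phi(E)\subseteq\B(H)^\u$; each finite fragment is satisfiable by one of the $\phi_k'$ produced from Lemma \ref{finite-fae}, so saturation yields the desired $\phi'$. Since all the bounds are completely bounded (not merely $\|\cdot\|_k$ for fixed $k$), I would need to check that the cb-norm condition $\|\phi'\|_{\cb}\leq C$ is captured by a countable set of conditions (one for each matrix level $k$), which it is; this is why $\aleph_1$-saturation suffices. Once the type is realized, $\phi'$ is the required extension and the lemma follows.
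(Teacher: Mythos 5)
Your proposal is correct and matches the paper's (very terse) proof in substance: the paper simply notes $M_2(\B(H)^\u)\cong\B(H)^\u$ and applies Paulsen's trick to reduce to the u.c.p.\ extension problem handled by finite approximate injectivity of $\B(H)^\u$, which is exactly the mechanism behind the cb-variant you invoke, and the $\aleph_1$-saturation step you add is the right way to upgrade the approximate extension to an exact one without degrading the cb-norm bound past $C$. The only small correction is that the relevant input here is the f.a.i.\ of $\B(H)^\u$ itself (i.e., the definition of FAE for the constant sequence $\B(H)$) rather than Lemmas \ref{finite-fae} and \ref{cb-fae} and the function $\sigma$, which are stated for the sequence $(M_n)$ and the ultraproduct $\prod_\u M_n$; the same Paulsen-trick argument goes through verbatim in the $\B(H)$ setting.
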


\begin{proof} Note that $M_2(\B(H)^\om)\cong \B(H)^\u$.
Now, use Paulsen's trick to convert to matricial operator systems and u.c.p.\ maps.
\end{proof}

We now are ready to give Ozawa's proof that $\B(H)$ does not satisfy FAE. Suppose, towards a contradiction, that $\B(H)^\u$ was f.a.i. Since $\B(H)$ is injective, it follows that there exists a conditional expectation $\Phi$ from $\B(H)^\u$ onto $M :=\prod_{\u} M_n(\mathbb{C})$ (C$^*$-algebra ultraproduct). Let $q: M\to M_{\rm vN}$ be the quotient onto the von Neumann ultraproduct. By the remarkable result of Haagerup and Thorbjornsen \cite{HT}, there is a realization $C_r^*(\mathbb F_2)\subset M_{\rm vN}$ which lifts to an embedding $\iota: C_r^*(\mathbb F_2)\to M$. 
Recall that $C_r^*(\mathbb F_\infty)\subset C_r^*(\mathbb F_2)$.  In particular, we have that $(q\circ\iota)(x)=x$ for all $x\in C_r^*(\mathbb F_\infty)$.

Fix arbitrary $n\geq 1$.  Let $u_1,\ldots,u_n$ denote the first free $n$ generators of $C^*_r(\mathbb{F}_\infty)$ and let $\theta_n:RC_n\to \B(H)^\u$ be defined by $\Theta(\delta_i):=\iota(u_i)$.  By Fact \ref{hp}, we have that $\theta_n$ is completely $2$-bounded. Since $\B(H)^\u$ is f.a.i., we have seen that we can extended $\theta_n$ to a completely $2$-bounded map from $R_n\oplus C_n$ into $\B(H)^\u$. By Fact \ref{otherhp}, we have that $\iota(u_i) = a_i + b_i$ with $a_i, b_i\in \B(H)^\u$ and  $\|\sum a_ia_i^*\|^{1/2},\ \|\sum b_i^* b_i\|^{1/2}\leq 2$. Setting $c_i := (q\circ\Phi)(a_i)$ and $d_i := (q\circ\Phi)(b_i)$, we have that $$n = \sum \|u_i\|_2^2\leq \sum 2(\|c_i\|_2^2 + \|d_i\|_2^2)\leq 2(\|\sum a_ia_i^*\| + \|\sum b_i^*b_i\|)\leq 16,$$ yielding a contradiction to the fact that $n$ was arbitrary.

\subsection{Yet another proof that $\B(H)^\u$ is not WEP}

Bradd Hart pointed out to us the following deep theorem of Kirchberg \cite{car} (see also \cite{P}) in light of which the shortest proof that $\B(H)^\u$ does not have WEP became apparent.

\begin{fact}[Kirchberg]\label{braddkirchberg}
Let $M:=\prod_\u M_n$ (the C$^*$ ultraproduct).  Then a C$^*$-algebra $A$ is exact if and only if the norm induced on $A\odot M$ from the tensor product map $A\odot M\hookrightarrow \prod_\u M_n(A)$ is the min-norm
\end{fact}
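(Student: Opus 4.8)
The plan is to recognize Fact~\ref{braddkirchberg} as the specialization to the family $(M_n)$ of Kirchberg's characterization of exactness through commutation of the minimal tensor product with (ultra)products, and then to prove the two implications by a diagram chase at the level of $\ell^\infty$-products. First I would record the relevant presentations. Writing $N_\u:=\{(x_n)\in\ell^\infty(M_n):\lim_\u\|x_n\|=0\}$ and $N_\u^A$ for the analogous ideal of $\ell^\infty(M_n(A))$, we have quotient $*$-homomorphisms $Q:\ell^\infty(M_n)\to M$ and $Q_A:\ell^\infty(M_n(A))\to\prod_\u M_n(A)$ with kernels $N_\u$ and $N_\u^A$. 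Since $M_n(A)=A\otimes_{\min}M_n$ (the unique norm in finite dimensions), the canonical ``product'' map $J:A\otimes_{\min}\ell^\infty(M_n)\to\ell^\infty(M_n(A))=\prod_n(A\otimes_{\min}M_n)$ is defined, and the tensor map $\iota$ of the statement fits into the commuting relation $Q_A\circ J=\iota\circ(\operatorname{id}_A\otimes Q)$, where $\operatorname{id}_A\otimes Q:A\otimes_{\min}\ell^\infty(M_n)\to A\otimes_{\min}M$ is a quotient map (the minimal tensor product of a surjection is a surjection). Applying the slice maps $f\otimes\operatorname{id}$, $f\in A^*$, to $\iota$ shows that $\iota$ is injective on $A\odot M$, so $\|\cdot\|_\iota:=\|\iota(\cdot)\|$ is a genuine $C^*$-norm and hence dominates $\|\cdot\|_{\min}$. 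The whole content is therefore the reverse inequality, and the claim is that it holds exactly when $A$ is exact.

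For the forward implication I would use exactness twice. Exactness of $A$ means precisely that $0\to A\otimes_{\min}N_\u\to A\otimes_{\min}\ell^\infty(M_n)\xrightarrow{\operatorname{id}\otimes Q}A\otimes_{\min}M\to 0$ is exact, i.e.\ $\ker(\operatorname{id}\otimes Q)=A\otimes_{\min}N_\u$; and, separately, it gives that $J$ is isometric (an exact $A$ commutes isometrically with $\ell^\infty$-products, which is standard from the finite-dimensional approximation definition of exactness). Now fix $t\in A\odot M$ and a lift $\tilde t\in A\odot\ell^\infty(M_n)$ with $(\operatorname{id}\otimes Q)\tilde t=t$. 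Since $\operatorname{id}\otimes Q$ is a quotient map, $\|t\|_{\min}=\inf\{\|\tilde t-s\|_{\min}:s\in A\otimes_{\min}N_\u\}$; applying the isometry $J$ and noting $J(A\otimes_{\min}N_\u)\subseteq N_\u^A$ (because $\lim_\u\|a\otimes x_n\|=\|a\|\lim_\u\|x_n\|=0$), this infimum dominates $\inf\{\|J(\tilde t)-z\|:z\in N_\u^A\}=\|Q_A J(\tilde t)\|=\|\iota(t)\|$. Hence $\|\iota(t)\|\le\|t\|_{\min}$ and the two norms coincide.

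For the converse I would run the computation backwards. The hypothesis $\|\cdot\|_\iota=\|\cdot\|_{\min}$ says that $\iota$ extends to an isometric embedding $A\otimes_{\min}M\hookrightarrow\prod_\u M_n(A)$, which by the commuting relation forces $\ker(\operatorname{id}\otimes Q)=J^{-1}(N_\u^A)$. One always has $A\otimes_{\min}N_\u\subseteq J^{-1}(N_\u^A)$, so the issue is the reverse inclusion $J^{-1}(N_\u^A)\subseteq A\otimes_{\min}N_\u$, which is not formal and is where the real work lies. The hard part will be to convert this single isometry statement into full exactness of $A$ (a statement about all short exact sequences). The route I would take is through the local theory: exactness of a $C^*$-algebra is equivalent to $1$-exactness of its underlying operator space, i.e.\ to the approximate factorization of each finite-dimensional subspace $E\subseteq A$ through matrix algebras by complete contractions. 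Using the $\aleph_1$-saturation of $\prod_\u M_n$ together with the isometry of $\iota$, one would extract from the ultraproduct, for each such $E$ and each $\e>0$, the required completely contractive factorizations $E\to M_q\to A$; equivalently, one invokes Kirchberg's reduction showing that the single sequence built from $(M_n)$ is a universal test for the exactness of the functor $A\otimes_{\min}-$. This reduction is the deep input of the Fact (due to Kirchberg; see \cite{car} and \cite{P}), and it is the only step I would not expect to reprove from scratch.
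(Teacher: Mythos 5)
The paper offers no proof of this Fact at all: it is stated as a ``deep theorem of Kirchberg'' with pointers to \cite{car} and \cite{P}, so there is no in-paper argument to compare yours against. Judged on its own terms, your forward implication is a correct and essentially complete diagram chase: the identity $Q_A\circ J=\iota\circ(\mathrm{id}_A\otimes Q)$, the quotient-norm formula for $\mathrm{id}_A\otimes Q$ (whose kernel is $A\otimes_{\min}N_\u$ by exactness), and the inclusion $J(A\otimes_{\min}N_\u)\subseteq N_\u^A$ do give $\|\iota(t)\|\le\|t\|_{\min}$, and the slice-map argument correctly yields the reverse inequality. One small correction: the isometry of $J:A\otimes_{\min}\ell^\infty(M_n)\to\prod_n(A\otimes_{\min}M_n)$ does not use exactness at all --- it holds for any $A$, since both sides are spatially represented on the same Hilbert space $H_A\otimes\bigl(\bigoplus_n\ell^2_n\bigr)$; exactness enters only through the identification of $\ker(\mathrm{id}_A\otimes Q)$.

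For the converse you have correctly isolated where the content lies (passing from the single norm identity for the matricial ultraproduct to exactness for all quotients, equivalently to the local factorization characterization), but you do not prove it: both the inclusion $J^{-1}(N_\u^A)\subseteq A\otimes_{\min}N_\u$ and the ``universal test sequence'' reduction are deferred to Kirchberg. That is a genuine gap in the sense that the harder half of the equivalence is only outlined, not established; on the other hand it is exactly the step the paper itself black-boxes via the citations to \cite{car} and \cite{P}, so your write-up is in fact more detailed than the source. If you wanted to close the gap along the route you indicate, the cleanest path is Pisier's: the hypothesis gives, for each finite-dimensional $E\subseteq A$ and finite subset of the unit ball, elements of $\prod_\u M_n(A)$ realizing the min-norm, and $\aleph_1$-saturation of the ultraproduct then produces the completely contractive factorizations $E\to M_q\to A$ witnessing $1$-exactness of $A$ as an operator space, which for C$^*$-algebras is equivalent to exactness.
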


  Indeed, as mentioned above, if $\B(H)^\u$ had WEP, then $M$ would also have WEP.  If $M$ had WEP, then by Kirchberg's tensorial characterization of WEP, we would have that there is a unique C$^*$ norm on C$^*(\mathbb{F}_\infty)\odot M$.  Combining this fact with Fact \ref{braddkirchberg}, we get that C$^*(\mathbb{F}_\infty)$ is exact, a contradiction.




\section{A Digression on WEP}

Recall from \cite{GS} that a C$^*$-algebra $A$ is said to be \emph{semi-p.e.c.\ as an operator space (resp. system)} if, whenever $B$ is a C$^*$-algebra containing $A$, $\varphi(\vec x)$ is a positive existential formula in the language of operator spaces (resp. systems), and $\vec a$ is a tuple from $A$,  then $\varphi(\vec a)^A=\varphi(\vec a)^B$.  (A positive existential formula is an existential formula whose quantifier-free part is constructed from atomic formulae using only nondecreasing functions as connectives.)  It was proven in \cite{GS} that if a C$^*$-algebra has WEP, then it is semi-p.e.c. as an operator space.  Further, it was shown (Corollary 2.29, op. cit.) that if $A$ is semi-p.e.c.\ as an operator system, then $A$ has the WEP.  It was left as an open question whether or not either of these implications are reversible.  We take the opportunity here to notice that the techniques from \cite{GS} actually show that the latter implication is in fact reversible.

\begin{prop}\label{wep-pec} A unital C$^*$-algebra has the WEP if and only if it is semi-p.e.c.\ as an operator system.
\end{prop}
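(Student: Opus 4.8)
The plan is to prove the two implications separately, since one direction is already on record. The implication that being semi-p.e.c.\ as an operator system implies WEP is exactly Corollary 2.29 of \cite{GS}, so the entire content lies in the converse: if a unital C$^*$-algebra $A$ has the WEP, then it is semi-p.e.c.\ as an operator system. Recall that \cite{GS} already shows that WEP implies being semi-p.e.c.\ as an operator \emph{space}; the task is therefore to upgrade that conclusion from the language of operator spaces to the richer language of operator systems, whose atomic formulae additionally involve the unit and the adjoint.

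So I would fix a positive existential operator-system formula $\varphi(\vec x)=\inf_{\vec y}\theta(\vec x,\vec y)$, a C$^*$-algebra $B\supseteq A$, and a tuple $\vec a$ from $A$. Since $A\subseteq B$, taking the infimum over a larger set gives $\varphi(\vec a)^B\le \varphi(\vec a)^A$ automatically, so the only thing to establish is the reverse inequality $\varphi(\vec a)^A\le \varphi(\vec a)^B$; that is, any near-optimal witness for $\varphi(\vec a)^B$ lying in $B$ must be approximable by a witness lying in $A$. The idea is to transfer witnesses using the weak expectation supplied by WEP. Representing $A\subseteq B\subseteq \B(H)$ and combining injectivity of $\B(H)$ with WEP, I would obtain a u.c.p.\ map $\Phi:B\to A^{**}$ with $\Phi|_A=\id$. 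The crucial point—and this is precisely what distinguishes the operator-system case from the operator-space case of \cite{GS}—is that $\Phi$ is \emph{unital and self-adjoint as well as completely contractive}: being u.c.p., it commutes with all of the operator-system operations (affine combinations involving the unit, and the adjoint) while not increasing any matrix norm. Since $\theta$ is built from atomic formulae using only nondecreasing connectives, the non-increase at the level of atomic norms propagates, so for any witness $\vec b$ from $B$ we get $\theta(\vec a,\Phi(\vec b))\le \theta(\vec a,\vec b)$, and hence $\varphi(\vec a)^{A^{**}}\le \varphi(\vec a)^B$.

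The main obstacle is then the passage from $A^{**}$ back down to $A$: the argument has produced a good witness $\Phi(\vec b)$ in the bidual, but being semi-p.e.c.\ demands a witness in $A$ itself, and $A$ is only weak$^*$-dense—not norm-dense—in $A^{**}$. This is exactly the technical heart already present in the proof of WEP $\Rightarrow$ semi-p.e.c.\ as an operator space in \cite{GS}, where it is handled by a density-and-convexity argument: the bidual witness is approximated in the relevant weak$^*$ sense by elements of $A$ with controlled matrix norms (Kaplansky density), and, because the atomic data entering $\theta$ are norms of affine expressions and hence convex, a Hahn--Banach/Mazur argument upgrades the weak$^*$ approximation to a norm approximation of the value $\theta(\vec a,\cdot)$. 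Since the transfer map $\Phi$ and the approximating maps are all u.c.p., the whole argument respects the unit and the adjoint, so it yields $\varphi(\vec a)^A\le \varphi(\vec a)^B$ for operator-system formulae and not merely operator-space ones. Combining this with the automatic inequality gives $\varphi(\vec a)^A=\varphi(\vec a)^B$, which shows that $A$ is semi-p.e.c.\ as an operator system and, together with Corollary 2.29 of \cite{GS}, completes the equivalence.
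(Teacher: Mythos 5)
Your overall strategy is sound and genuinely different from the paper's, but the descent from $A^{**}$ back to $A$ --- which you correctly identify as the main obstacle --- is not justified by the tools you name. Kaplansky/Goldstine density gives you a net $\vec c_\lambda$ in $A$ converging weak$^*$ to $\Phi(\vec b)$ with norm control, but weak$^*$ convergence only gives \emph{lower} semicontinuity of the norms $\|p_i(\vec a,\cdot)\|$; it does not prevent $\|p_i(\vec a,\vec c_\lambda)\|$ from being much larger than $\|p_i(\vec a,\Phi(\vec b))\|$, which is the inequality you actually need. Mazur's lemma cannot rescue this: it identifies the \emph{weak} and norm closures of convex sets, whereas here the convergence is weak$^*$, and $A$ is already norm-closed and convex yet weak$^*$-dense in $A^{**}$ --- if Mazur applied one would conclude $A=A^{**}$. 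The step is repairable, but by a different argument: since $\theta$ involves only finitely many atomic expressions $p_i$ (real-affine, with weak$^*$-continuous bidual extensions into $M_{k_i}(A)^{**}\cong M_{k_i}(A^{**})$), one shows by Hahn--Banach separation against the predual that if the convex set $\{\vec c\in A^n:\|c_j\|\le\|b_j\|,\ \|p_i(\vec a,\vec c)\|\le r_i+\e\ \forall i\}$ were empty, a separating functional $f=(f_i)\in\bigoplus_i M_{k_i}(A)^*$ would extend weak$^*$-continuously to the biduals and contradict $\|p_i(\vec a,\Phi(\vec b))\|\le r_i$. As written, your proof has a gap precisely at its ``technical heart.''

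It is worth noting that the paper sidesteps the bidual entirely and thereby avoids this issue. Its proof uses the finite-dimensional consequence of WEP from \cite{GS} (condition $(\gamma')$) to produce maps $\phi_i:E_i\to E_i\cap A$ landing in $A$ itself with $\|\phi_i\|_i\le 1$ and $\|\phi_i|_{E_i\cap A}-\id\|\le 1/i$ along a filtration of a separable $B\supseteq A$, assembles these into a u.c.p.\ map $\phi:B\to A^\u$ with $\phi|_A=\id_A$, and then transfers witnesses into $A^\u$; the return to $A$ is then free, since the diagonal embedding $A\preceq A^\u$ is elementary and positive existential formulas take the same value. A separable elementary substructure argument handles the non-separable case. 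So the two approaches differ exactly at the point where yours is incomplete: an ultrapower of $A$ is an elementary extension (witnesses come back down by \L o\'s), whereas $A^{**}$ is not, and pulling witnesses out of it requires the separation argument above.
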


\begin{proof} We first suppose $A$ is a \emph{separable}, unital C$^*$-algebra with the WEP. Let $A\subset B$ be a unital embedding with $B$ separable.  Let $E_1\subset E_2\subset \dotsc$ be a filtration of $B$ by finite-dimensional operator spaces so that $A\cap (\bigcup_i E_i)$ is dense in $A$.  We now use the proof of \cite[Proposition 2.34]{GS}, specifically the fact the ``WEP implies condition $(\gamma')$'' as defined therein. Thus there are maps $\phi_i: E_i\to E_i\cap A$ with $\|\phi_i\|_i\leq 1$ and $\|\phi_i|_{E_i\cap A} - \id_{E_i\cap A}\|\leq 1/i$.  Given a nonprincipal ultrafilter $\u$ on $\n$, we may define a ``limiting'' map $\phi_\u: \bigcup_i E_i\to A^\u$ which extends completely contractively to a map $\phi: B\to A^\u$ such that $\phi|_A = \id_A$, whence, by unitality, $\phi$ is a u.c.p.\ map. It is now easy to see that $A$ is p.e.c. in $B$.

Now suppose that $A$ is a non-separable unital C$^*$-algebra with the WEP and consider a C$^*$-algebra $B$ containing $A$; we wish to show that $A$ is p.e.c.\ in $B$.  Fix a positive existential formula $\varphi(\vec x)$ in the language of operator systems and a tuple $\vec a$ from $A$.  Let $C$ be a separable elementary substructure of $A$ containing $\vec a$.  By \cite[Corollary 2.2]{GS}, $C$ also has WEP, whence is semi-p.e.c.\ as an operator system by the previous paragraph.  We now have that $\varphi(\vec a)^A=\varphi(\vec a)^C=\varphi(\vec a)^B$.
\end{proof}

\begin{nrmk}
A priori, it seems that $A$ has the WEP if and only if it is p.e.c.\ in $\B(H)$ for every representation $A\hookrightarrow \B(H)$.  In actuality, it suffices to check that $A$ is p.e.c.\ in \emph{some} representation.  Indeed, suppose that the inclusion $A\hookrightarrow \B(H)$ is p.e.c.\ and $A\hookrightarrow \B(K)$ is another representation of $A$.  By Arveson extension, we get a u.c.p.\ map $\B(K)\to \B(H)$ that restricts to the original inclusion $A\hookrightarrow \B(H)$.  It follows immediately that $A$ is p.e.c. in $\B(K)$.
\end{nrmk}

\begin{nrmk}
It had been suggested to us that some variant of the ``linearization trick''  found in \cite{HT} might show that if $A$ is semi-p.e.c.\ as an operator system, then $A$ is p.e.c.\ as a C$^*$-algebra.  The previous proposition can be used to dismiss this possibility.  Indeed, using the fact that simplicity is uniformly definable by a sequence of types of a particularly nice form (see \cite{FHRTW}), one can show that a C$^*$-algebra that is p.e.c.\ must be simple.  As a result, if the aforementioned implication were true, then this would imply that all WEP C$^*$-algebras are simple, which is definitely not the case.
\end{nrmk}

With the equivalence of WEP and semi-p.e.c.\ in the language of operator systems in hand, we now give an elementary proof of one of the main results of \cite{kav-riesz}, namely the equivalence of the WEP and the so-called ``complete tight Riesz interpolation property.'' For the convenience of the reader we recall this notion.

\begin{df}[Kavruk \cite{kav-riesz}] For a unital embedding of C$^*$-subalgebras $A\subset B$, we say that the inclusion has the \emph{complete tight Riesz interpolation property} if for any $n$ and any finite collection $(x_1,\dotsc, x_\ell, y_1, \dotsc, y_m)$ of self-adjoint elements of $M_n(A)$, if there is $z\in M_n(B)$ so that $x_1,\dotsc,x_\ell < z < y_1,\dotsc, y_m$, then there is an element $z'\in M_n(A)$ satisfying the same condition.  (Here, $x<y$ means $y-x\geq \delta\cdot I$ for some $\delta\in \r^{>0}$.) 
\end{df}

The following is Theorem 7.4 from \cite{kav-riesz}.

\begin{thm}[Kavruk] A unital, separable C$^*$-algebra has the WEP if and only if $A$ has the complete tight Riesz interpolation property for some unital inclusion $A\subset \B(H)$. 
\end{thm}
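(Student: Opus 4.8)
The plan is to deduce the theorem from Proposition \ref{wep-pec}, which identifies WEP with being semi-p.e.c.\ as an operator system, together with the remark following it, which reduces WEP to the assertion that $A$ is p.e.c.\ (as an operator system) in a single representation $A \subset \B(H)$. Accordingly, I would fix a unital inclusion $A \subset \B(H)$ and show that the complete tight Riesz interpolation property for this inclusion is equivalent to $A$ being p.e.c.\ in $\B(H)$. The guiding idea is that a tight Riesz interpolation problem ``$x_i < z < y_j$'' is exactly the order-theoretic shadow of a positive existential condition: the existential quantifier is $\inf_z$, and the constraints $z - x_i \geq 0$ and $y_j - z \geq 0$ are recorded by the distance-to-the-positive-cone predicate $d_+$ (for self-adjoint $w$, $d_+(w)$ equals the norm of the negative part of $w$, and is a quantifier-free definable predicate in the operator system language).

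For the forward direction, assume $A$ has WEP, hence is semi-p.e.c.\ as an operator system. Given self-adjoint $x_1,\dots,x_\ell,y_1,\dots,y_m \in M_n(A)$ and $z \in M_n(\B(H))$ with $x_i < z < y_j$, I would fix $\de > 0$ with $z - x_i \geq \de I$ and $y_j - z \geq \de I$ for all $i,j$, and form the positive existential formula
\[ \varphi(\vec x, \vec y) \;:=\; \inf_{z = z^*}\Big( \textstyle\sum_i d_+(z - x_i - \de I) + \sum_j d_+(y_j - z - \de I)\Big). \]
The given $z$ witnesses $\varphi^{\B(H)} = 0$, so semi-p.e.c.\ yields $\varphi^{A} = 0$; any $z' \in M_n(A)$ making each summand smaller than $\de/2$ then satisfies $z' - x_i \geq (\de/2) I$ and $y_j - z' \geq (\de/2) I$, i.e.\ $x_i < z' < y_j$. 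This in fact establishes the complete tight Riesz interpolation property for \emph{every} such inclusion, which is more than required.

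For the reverse direction, assume the complete tight Riesz interpolation property for some $A \subset \B(H)$; by the reductions above it suffices to prove that $A$ is p.e.c.\ in $\B(H)$, i.e.\ that each positive existential formula takes the same value at a tuple from $A$ whether computed in $A$ or in $\B(H)$ (only the inequality ``$\leq$'' needs proof). The substance is a reduction showing that a witness found in $\B(H)$ can always be matched in $A$ by solving finitely many tight Riesz instances. I would proceed in three steps: (i) use the unital $2\times 2$ trick $\|w\| \leq t \iff \big(\begin{smallmatrix} tI & w \\ w^* & tI\end{smallmatrix}\big) \geq 0$ to rewrite the quantifier-free norm constraints defining a positive existential $\varphi$ as positivity constraints on self-adjoint matrix-affine expressions; (ii) package the several existential witnesses into a single self-adjoint element $z$ of a suitable matrix amplification $M_N(A)$ and re-express each positivity constraint as a two-sided strict relation $x_i < z < y_j$ with parameters from $M_N(A)$, approximating any equality constraints by pairs of strict inequalities with a vanishing margin; (iii) invoke the \emph{completeness} of the tight Riesz property (it ranges over all matrix levels, matching the fact that the sorts of $\B(H)$ over which $\varphi$ quantifies are the $M_n(\B(H))$) to pull the witness from $\B(H)$ into $A$. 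Feeding these witnesses into an ultrapower/limiting argument in the spirit of the proof of Proposition \ref{wep-pec} (using separability to filter $\B(H)$ by finite-dimensional operator systems) then gives p.e.c.\ in $\B(H)$, and hence WEP.

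The main obstacle is precisely steps (ii)--(iii): faithfully translating a general positive existential condition---several witnesses, matrix-affine constraints, and strict-versus-approximate positivity---into the rigid two-sided form $x_i < z < y_j$ governed by the complete tight Riesz property, while controlling the margins so that approximate solutions in $A$ upgrade to genuine strict interpolants. Verifying that $d_+$ and the formulae built from it are genuinely positive existential, and that the packaging in (ii) respects completeness across all matrix levels, are the technical points demanding care.
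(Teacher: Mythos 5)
Your forward direction is correct and is exactly what the paper means when it says that implication ``follows immediately from Proposition \ref{wep-pec}'': a tight Riesz instance is a positive existential condition (your formula with $d_+$ is legitimate, since on a sort of radius $c$ one has $d_+(w)=\max(0,\|c\cdot 1-w\|-c)$, a nondecreasing function of an atomic formula), so semi-p.e.c.\ transfers an approximate interpolant down to $A$ and the margin $\de$ upgrades it to a strict one. The overall architecture of your converse also matches the paper's: show that the Riesz property for $A\subset \B(H)$ forces positive existential formulas to transfer down to $A$, then invoke Proposition \ref{wep-pec} together with the remark that being p.e.c.\ in a single faithful representation suffices.

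The gap is in your steps (ii)--(iii), precisely where you flag the obstacle. After the $2\times 2$ trick, a norm constraint $p_i(\vec a,b)\leq r_i$ becomes positivity of a self-adjoint matrix whose off-diagonal entry is an affine expression $L_i(b)$, i.e.\ a two-sided bound on $\left(\begin{smallmatrix}0 & L_i(b)\\ L_i(b)^* & 0\end{smallmatrix}\right)$. But the complete tight Riesz property produces an interpolant $z'$ ranging over \emph{all} self-adjoint elements of the relevant $M_N(A)$, with no guarantee that $z'$ lies in the affine subspace $\bigl\{\left(\begin{smallmatrix}0 & L_i(c)\\ L_i(c)^* & 0\end{smallmatrix}\right) : c\in M_n(A)\bigr\}$; hence no witness $c\in M_n(A)$ can be read off the interpolant. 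The same defect afflicts any block-diagonal packaging of several witnesses into one $z$. The paper circumvents this with a different normalization: reduce (via self-adjointness, matrix amplification, and splitting self-adjoints into differences of positives) to atomic formulas $p_i(\vec x,y)=\|\sum_j\alpha_{i,j}x_j+\beta_i y\|$ with positive scalar coefficients and positive variables. Then, since the element inside the norm is positive, $p_i(\vec a,b)= r_i$ yields $0<b\leq \frac{1}{\beta_i}\bigl(r_i 1_n-\sum_j\alpha_{i,j}a_j\bigr)$ --- the witness variable \emph{itself}, not an affine image of it, is sandwiched --- which is literally a tight Riesz instance at level $n$; the interpolant $c$ then substitutes back to give $p_i(\vec a,c)<r_i+\beta_i\delta$. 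To repair your argument you would need to replace (i)--(ii) by such a normalization, or else show the Riesz interpolant can be chosen inside the relevant affine subspace, which the hypothesis does not provide.
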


\begin{proof} The forward implication follows immediately from Proposition \ref{wep-pec}.  For the converse, suppose that $A$ has the complete tight Riesz interpolation property.  Consider the positive quantifier-free formula $\varphi(\vec x,\vec y)$, which is necessarily of the form $u(p_1(\vec x_1,\vec y_1),\ldots,p_k(\vec x_k,\vec y_k))$, where $u:\r^k\to \r$ is a non-decreasing continuous function and each $p_i$ is an atomic formula in the language of operator systems. We recall that an atomic formula $p(\vec x, \vec y) = p(x_1,\dotsc, x_\ell, y_1,\dotsc, y_m)$ in the language of operator systems is interpreted in a C$^*$-algebra (or operator system) $A$ as the norm of a $\ast$-linear combination of the unit $1_n\in M_n(A)$ and $x_1,\dotsc,x_\ell, y_1,\dotsc, y_m\in M_n(A)$ for some $n$. Therefore without loss of generality we may assume the variables are self-adjoint. By passing to matrix amplifications, we may also assume that each variable in $\varphi(\vec x, \vec y)$ has the same matrix rank. 

We treat here a special case of the above, the general case being readily deducible from this case using standard operator system and C$^*$-algebraic techniques (for example writing a self-adjoint element $x$ as the difference of two positive elements $x=x_1-x_2$ and using the fact that $\|x\|=\max(\|x_1\|,\|x_2\|)$.) Suppose that $B$ is a C$^*$-algebra containing $A$ and $\vec a$ is a tuple from $M_n(A)_+$ such that $(\inf_{\vec y\in M_n(B)_+}\varphi(\vec a,\vec y))^B=r$.  Fix $\epsilon>0$ and choose $\vec b\in M_n(B)_+$ such that $\varphi(\vec a,\vec b)^B\leq r+\epsilon$.  For $i=1,\ldots,k$, set $r_i:=p_i(\vec a,\vec b)$.  Since $u$ is non-decreasing and continuous, it suffices to find a sufficiently small $\delta>0$ and a tuple $\vec c$ from $M_n(A)_+$ such that $p_i(\vec a,\vec c)\leq r_i+\delta$ for each $i$.
We further suppose that $\vec y$ is a single variable $y$ and that $p_i(\vec x,y)=\|\sum_j \alpha_{i,j}x_j+\beta_i y\|$, where each $\alpha_{i,j}$ and $\beta_i$ are positive real numbers.  Of course, we may also assume that $b\not=0$.  Since $\sum_j \alpha_{i,j}a_j+\beta_i b\leq \|\sum_j \alpha_{i,j}a_j+\beta_i b\|\cdot 1_n=r_i\cdot 1_n$, we have, for each $i$, that $0<b\leq \frac{1}{\beta_i}(r_i-\sum_j \alpha_{i,j}a_j)\cdot 1_n$.  Thus, by the complete tight Riesz property, for any $\delta>0$ sufficiently small, there is $c\in M_n(A)$ such that $0<c<(\frac{1}{\beta_i}(r_i-\sum_j \alpha_{i,j}a_j)+\delta)\cdot 1_n$, whence $\|\sum_j \alpha_{i,j}a_j+\beta_i c\|<r_i+\beta \delta$.
\end{proof}

\section{The spaces $\X_n$ and Kirchberg's Embedding Problem}

In this section, we introduce a variant of the spaces $\OS_n$ and connect them to Kirchberg's Embedding Problem, which asks whether every C$^*$-algebra is embeddable in an ultrapower of the Cuntz algebra $\O_2$.  We let KEP denote the statement that the Kirchberg Embedding Problem has a positive solution.  We refer the reader to our earlier paper \cite{GS} for a comprehensive treatment of this problem.

\begin{df}
For any $n$, we let $\X_n$ denote the space of all $n$-dimensional operator subspaces $E\subset \B(H)$ under the identification between $E, F\subset \B(H)$ if there is a linear bijection $\phi: E\to F$ which extends to a $\ast$-isomorphism from $C^*(E)$ onto $C^*(F)$. 
\end{df}

We now introduce ``weak'' and ``strong'' topologies on $\X_n$ which are analogous to the weak and strong topologies on $\OS_n$. Let $\mathbb P_{n,k}$ be the set of all degree at most $k$, noncommutative $\ast$-polynomials in $n$ variables $p(\vec x) = \sum c_i \vec x^{\alpha(i)}$ with $\sum |c_i|\leq 1$. Given $E, F\in \X_n$ we say that a map $\phi: E\to F$ is \emph{$(m,\de)$-almost multiplicative} if \[\gamma_m(\phi) := \sup_{p\in \mathbb P_{n,m}}\sup_{\vec x\in (E)_1^n} \left|\|p(\vec x)\| -  \|p(\phi(\vec x))\|\right|<\de.\] We set $$\gamma_s(E,F) := \inf_{\phi: E\to F} \sup_m\max\{\gamma_m(\phi), \gamma_m(\phi^{-1})\},$$ where $\phi: E\to F$ ranges over all unital, linear bijections. Finally, we set $$\gamma_w(E,F) = \inf_{\phi:E\to F}\sum_m \frac{1}{2^m} \max\{\gamma_m(\phi), \gamma_m(\phi^{-1})\},$$ where once again $\phi:E\to F$ ranges over all unital, linear bijections.

The following result is proven in the same way as its $\OS_n$ counterpart (see \cite[Proposition 16]{P}).
\begin{prop}\label{mci} A sequence $E_i\to E$ in the weak topology if and only if, for any nonprincipal ultrafilter $\u$ on $\n$, there is a map $
\phi: E\to \prod_\u E_i$ which induces a $\ast$-isomorphism on the corresponding C$^*$-algebras.
\end{prop}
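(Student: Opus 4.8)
The plan is to mimic the standard $\OS_n$ ultraproduct characterization, adapting the metric $\delta_w$ to the almost-multiplicative setting governed by $\gamma_w$. The essential point is that $\gamma_w(E_i,E)\to 0$ exactly when, along the ultrafilter, the coordinatewise map becomes an isometric $\ast$-isomorphism at the level of the generated C$^*$-algebras.

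First I would establish the forward direction. Assume $E_i\to E$ weakly, so $\gamma_w(E_i,E)\to 0$. By definition of $\gamma_w$ as an infimum over unital linear bijections, for each $i$ I may choose $\phi_i:E\to E_i$ (note the direction; since $\gamma_m(\phi)$ is symmetric under passing to $\phi^{-1}$, the choice of source/target is cosmetic) with $\sum_m 2^{-m}\max\{\gamma_m(\phi_i),\gamma_m(\phi_i^{-1})\}$ within a factor of $2$ of $\gamma_w(E_i,E)$, so this sum tends to $0$ along $i$. For each fixed $m$, this forces $\gamma_m(\phi_i)\to 0$ and $\gamma_m(\phi_i^{-1})\to 0$. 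I then define $\phi:=(\phi_i)^\bullet:E\to \prod_\u E_i$ using the notation established earlier, i.e.\ $\phi(a)=(\phi_i(a))^\bullet$. I must check this is well-defined (the $\phi_i$ are uniformly bounded, which follows from $\gamma_1(\phi_i)\to 0$ controlling operator norms on the unit ball, together with unitality) and that it induces a $\ast$-isomorphism on the level of C$^*$-algebras: for any $\ast$-polynomial $p$ of degree $\leq m$ with the normalization $p\in\mathbb{P}_{n,m}$, the quantity $\bigl|\,\|p(\vec a)\|-\|p(\phi_i(\vec a))\|\,\bigr|\leq\gamma_m(\phi_i)\to 0$, so $\|p(\phi(\vec a))\|=\lim_\u\|p(\phi_i(\vec a))\|=\|p(\vec a)\|$ by the definition of the C$^*$-ultraproduct norm. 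Since polynomials are dense in $C^*(E)$, $\phi$ extends to an isometric (hence $\ast$-isomorphic onto its image) embedding, and a symmetric argument using $\gamma_m(\phi_i^{-1})\to 0$ shows the induced map is surjective onto $C^*(\prod_\u E_i)$-generated by the image.

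For the converse I would reverse this. Suppose that for some (hence, by the usual ultrafilter-independence of such ultraproduct statements, for any) nonprincipal $\u$ there is $\phi:E\to\prod_\u E_i$ inducing a $\ast$-isomorphism on the generated C$^*$-algebras. Writing $\phi=(\phi_i)^\bullet$ with each $\phi_i:E\to E_i$ a unital linear map, the isomorphism condition gives, for every fixed $m$ and every $p\in\mathbb{P}_{n,m}$ and $\vec a\in(E)_1^n$, that $\lim_\u\bigl|\,\|p(\vec a)\|-\|p(\phi_i(\vec a))\|\,\bigr|=0$. Taking a supremum over the (by compactness of $(E)_1^n$ and a standard finite-net argument, effectively countably many relevant) constraints, I would deduce $\lim_\u\gamma_m(\phi_i)=0$ for each $m$, and symmetrically $\lim_\u\gamma_m(\phi_i^{-1})=0$ once I argue that $\phi_i$ is eventually a bijection with controlled inverse (this uses that $\phi$ is an isomorphism, so $\phi^{-1}$ is also almost-multiplicative in the limit). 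A standard diagonal argument then upgrades ``$\lim_\u\gamma_m=0$ for each $m$'' to ``$\lim_\u\gamma_w(E_i,E)=0$,'' and since this holds along every nonprincipal ultrafilter, the sequence converges in the weak topology.

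The main obstacle I anticipate is the uniform passage from control of individual polynomials $p$ to control of the supremum $\gamma_m(\phi)=\sup_{p}\sup_{\vec x}|\cdots|$: a priori the $\ast$-isomorphism condition only yields pointwise (in $p$ and $\vec x$) convergence, whereas $\gamma_m$ demands uniformity. This is exactly the point where I must invoke a compactness/$\aleph_1$-saturation argument, parallel to the one used in the $\OS_n$ theory (cf.\ \cite[Proposition 16]{P}): the set $\mathbb{P}_{n,m}\times(E)_1^n$ is compact, the function being supremized depends continuously on its arguments, and the ultraproduct is $\aleph_1$-saturated, so the pointwise control self-improves to the uniform control needed. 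I expect the bookkeeping in the surjectivity half of the forward direction (ensuring $\phi$ hits all of the ultraproduct C$^*$-algebra, not merely embeds) to require the mildest extra care, but it follows the same template.
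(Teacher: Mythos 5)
Your proposal is correct and follows essentially the same route as the paper, which does not write out a proof at all but simply notes that the result ``is proven in the same way as its $\OS_n$ counterpart'' (citing Pisier); your plan is precisely that standard argument, with the right attention paid to the two genuine points of care (uniform boundedness of the lifts $\phi_i$, and the compactness/equicontinuity step upgrading pointwise control over $\mathbb{P}_{n,m}\times(E)_1^n$ to control of the supremum $\gamma_m$).
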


\begin{cor} $(\X_n,\wk)$ is a compact Polish space.
\end{cor}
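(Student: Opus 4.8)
The plan is to establish that $(\X_n, \wk)$ is both compact and Polish by exploiting Proposition \ref{mci}, which characterizes weak convergence in terms of ultraproducts. First I would address the Polish property. The space $\X_n$ is built from $n$-dimensional operator subspaces of $\B(H)$ under a $\ast$-isomorphism-respecting equivalence, and the metric $\gamma_w$ is an honest metric on this set once we verify that $\gamma_w(E,F)=0$ forces $E$ and $F$ to be identified (which amounts to checking that the existence of a unital linear bijection preserving all $\gamma_m$ exactly yields a $\ast$-isomorphism of the generated C$^*$-algebras, via the usual argument that $\gamma_m=0$ for all $m$ means the map is completely isometric and multiplicative on norms). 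Separability should follow by a small perturbation/approximation argument analogous to the one used for $\OS_n$: the finitely-generated matricial models coded by rational data are dense, so one exhibits a countable $\gamma_w$-dense subset.

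Next I would prove completeness, which is the step I expect to require the most care. The clean route is to show directly that $(\X_n, \gamma_w)$ is sequentially compact, since compactness plus metrizability automatically gives completeness (every compact metric space is complete and separable, hence Polish). To see compactness, take any sequence $E_i \in \X_n$. Fix a nonprincipal ultrafilter $\u$ on $\n$ and form the operator-system (or operator-space) ultraproduct $\prod_\u E_i$, together with the C$^*$-algebra it generates inside $\prod_\u \B(H)$. Because each $E_i$ is $n$-dimensional and the generators have uniformly bounded norm, the ultraproduct $\prod_\u E_i$ is again an $n$-dimensional operator space sitting inside a C$^*$-algebra, so it represents a point $E \in \X_n$. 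Proposition \ref{mci} then says precisely that some subnet (indeed the whole sequence along $\u$, interpreted appropriately) converges weakly to this $E$, since the defining map $\phi: E \to \prod_\u E_i$ inducing a $\ast$-isomorphism is exactly the witness required for $E_i \to E$ in the weak topology.

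The main obstacle will be verifying that the ultraproduct $\prod_\u E_i$ genuinely lands in $\X_n$ rather than collapsing in dimension or failing to be realized as a subspace of some $\B(K)$ generating a well-defined C$^*$-algebra. One must confirm that the ultraproduct of the generating C$^*$-algebras $C^*(E_i)$ is again a C$^*$-algebra containing an isometric copy of $\prod_\u E_i$ as an $n$-dimensional subspace, and that no dimension is lost in the limit — this uses that a unital linear bijection's inverse norms are controlled, so the $\gamma_m(\phi^{-1})$ terms prevent degeneration. Once dimension $n$ is preserved, the GNS construction (or direct representation) realizes this limit object concretely in some $\B(K)$, placing it in $\X_n$. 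The remaining verification that $\gamma_w$ is a genuine metric (triangle inequality and the identification property) is routine and parallels the $\OS_n$ case treated in \cite{P}, so I would invoke Proposition \ref{mci} as the substantive input and dispatch the metric axioms by the same reasoning used there.
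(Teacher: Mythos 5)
Your proposal is correct and follows essentially the route the paper intends: the corollary is deduced from Proposition \ref{mci} exactly as in the $\OS_n$ case, with compactness coming from the fact that the ultraproduct $\prod_\u E_i$ (taken with respect to well-conditioned, e.g.\ Auerbach, bases so that dimension is preserved) is itself a point of $\X_n$ to which the sequence converges weakly along $\u$, and Polishness then following since a compact metrizable space is automatically complete and separable. The only cosmetic remark is that your separate separability discussion in the first paragraph is rendered unnecessary by the compactness argument, as you yourself note.
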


The proof of the following proposition is straightforward and left to the reader.
\begin{prop} The forgetful map $G: \X_n\to \OS_n$ is weak-weak and strong-strong continuous.
\end{prop}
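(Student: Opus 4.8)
The plan is to reduce both assertions to sequential continuity (all four spaces being metric) and to exploit the two ultraproduct descriptions of convergence together with the elementary fact that a $\ast$-isomorphism of C$^*$-algebras is automatically completely isometric. Throughout fix a nonprincipal ultrafilter $\u$ on $\n$, and recall that for $n$-dimensional subspaces the operator space ultraproduct $\prod_\u G(E_i)$ is canonically the subspace $\prod_\u E_i$ of the C$^*$-algebra ultraproduct, carrying its inherited matrix norms.

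For \emph{weak--weak} continuity, suppose $E_i\to E$ in $(\X_n,\wk)$. By Proposition \ref{mci} there is a linear bijection $\phi\colon E\to\prod_\u E_i$ inducing a $\ast$-isomorphism $C^*(E)\to C^*(\prod_\u E_i)$; being a $\ast$-isomorphism, $\phi$ is completely isometric, so $G(E)=E$ is completely isometric to $\prod_\u G(E_i)$. By the ultraproduct characterization of the weak topology on $\OS_n$ recorded in Section 2, this says precisely that $G(E_i)\to G(E)$ in $(\OS_n,\wk)$. This direction is genuinely easy: the weak topology only tests each matrix level $\|\cdot\|_k$ separately, and complete isometry of the single limit map $\phi$ delivers all levels at once.

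For \emph{strong--strong} continuity, suppose $E_i\to E$ in $(\X_n,\operatorname{strong})$ and choose, for each $i$, a unital linear bijection $\phi_i\colon E_i\to E$ nearly realizing $\gamma_s(E_i,E)$, so that $\sup_m\max\{\gamma_m(\phi_i),\gamma_m(\phi_i^{-1})\}\to 0$. Since $\gamma_1(\phi_i^{-1})\to 0$ the $\phi_i^{-1}$ are uniformly bounded and assemble into a linear bijection $\Psi:=(\phi_i^{-1})^\bullet\colon E\to\prod_\u E_i$. For any $\ast$-polynomial $p$ and any $\vec y\in (E)_1^n$ one has $\|p(\Psi(\vec y))\|=\lim_\u\|p(\phi_i^{-1}(\vec y))\|=\|p(\vec y)\|$, the last equality because $\gamma_{\deg p}(\phi_i^{-1})\to 0$. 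Thus $\Psi$ preserves every $\ast$-polynomial norm exactly, hence extends to a $\ast$-isomorphism of the generated C$^*$-algebras, so $\Psi$ and $\Psi^{-1}$ are completely isometric. Because $\Psi=(\phi_i^{-1})^\bullet$ and (by the analogous computation) $\Psi^{-1}$ is the ultraproduct of the $\phi_i$, finite-dimensionality gives $\|\Psi\|_k=\lim_\u\|\phi_i^{-1}\|_k$ and $\|\Psi^{-1}\|_k=\lim_\u\|\phi_i\|_k$ for every fixed $k$; complete isometry then forces $\|\phi_i\|_k,\|\phi_i^{-1}\|_k\to 1$ for each fixed $k$.

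The remaining step—and the main obstacle—is to upgrade this \emph{per-$k$} control to control of the completely bounded norms, which is exactly what the strong topology on $\OS_n$ sees; by itself the per-$k$ statement only reproves weak convergence, since the $\gamma_m$'s record only norms of \emph{scalar} $\ast$-polynomials, which at any fixed approximate level carry strictly less information than the matrix norms $\|\cdot\|_k$, and the full operator space structure is recovered only in the ultraproduct (i.e. by preserving all $\ast$-polynomial norms simultaneously). Finite-dimensionality resolves this: as $\phi_i$ and $\phi_i^{-1}$ have $n$-dimensional range, the finite-rank form of Smith's lemma gives $\|\phi_i\|_{\cb}=\|\phi_i\|_n$ and $\|\phi_i^{-1}\|_{\cb}=\|\phi_i^{-1}\|_n$, so convergence at the single level $n$ already yields $\|\phi_i\|_{\cb},\|\phi_i^{-1}\|_{\cb}\to 1$. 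Hence $d_{\cb}(G(E_i),G(E))\le\|\phi_i\|_{\cb}\,\|\phi_i^{-1}\|_{\cb}\to 1$, i.e. $G(E_i)\to G(E)$ strongly, completing the argument.
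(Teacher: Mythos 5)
The paper leaves this proof to the reader, so there is no official argument to compare against; judging yours on its own terms, the weak--weak half is correct. Proposition \ref{mci} hands you a linear bijection $E\to\prod_\u E_i$ that is the restriction of a $\ast$-isomorphism, hence a complete isometry, and since this holds for every nonprincipal $\u$, the ultraproduct characterization of weak convergence in $\OS_n$ gives $G(E_i)\to G(E)$ weakly.

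The strong--strong half, however, breaks down at exactly the point you flag as ``the main obstacle.'' The statement you invoke to close it --- that a linear map whose range is $n$-dimensional satisfies $\|\phi\|_{\cb}=\|\phi\|_n$ --- is not Smith's lemma and is false. Smith's lemma requires the \emph{codomain} to act on an $n$-dimensional Hilbert space (e.g.\ to be $M_n$ or a subspace of $M_n$); it says nothing about maps into a general $n$-dimensional operator space $E\subset \B(H)$, and the gap between the fixed-level norms $\|\cdot\|_k$ and $\|\cdot\|_{\cb}$ for such spaces is precisely the subject of Section 2. Indeed, if your ``finite-rank Smith's lemma'' held, then $d_{\cb}=d_n$ on $\OS_n$, the weak and strong topologies would coincide, and $(\OS_n,\operatorname{strong})$ would be compact, hence separable --- contradicting the Junge--Pisier theorem on which the whole paper rests. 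As written, your argument therefore establishes only that strong convergence in $\X_n$ implies \emph{weak} convergence in $\OS_n$, which you had already observed is insufficient. A correct proof must exploit the full strength of the hypothesis $\sup_m\max\{\gamma_m(\phi_i),\gamma_m(\phi_i^{-1})\}\to 0$ --- smallness of the distortion of $\ast$-polynomial norms \emph{uniformly in the degree} --- to bound $\|\phi_i\|_k$ uniformly in $k$ (for instance by estimating matrix norms through scalar quantities such as $\|\mathrm{Tr}_k((x^*x)^N)\|$ with $N$ large); controlling each matrix level separately, which is all your ultraproduct computation yields, cannot suffice.
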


We need to recall the main result of \cite{GS}.  By a \emph{condition} we mean a finite set $p(\vec x)$ of inequalities of the form $\varphi(\vec x)<\e$, where $\varphi(\vec x)$ is a quantifier-free formula in the language of C$^*$-algebras.  We say that a condition $p(\vec x)$ is \emph{satisfiable} if there is a C$^*$-algebra $A$ and a tuple $\vec a$ from $A$ of the appropriate length for which $\varphi(\vec a)^A<\epsilon$ holds for all inequalities $\varphi(\vec x)<\epsilon$ belonging to $p(\vec x)$; in this case, we say that $\vec a$ \emph{satisfies} $p(\vec x)$.  We say that the (satisfiable) condition $p(\vec x)$ has \emph{good nuclear witnesses} if, for any $\e>0$, there is $k\in \n$ and a tuple $\vec a\in \B(H)^{|\vec x|}$ that satisfies $p(\vec x)$ for which there are u.c.p.\ maps $\phi:S\to M_k$ and $\psi:M_k\to \B(H)$ such that $\|(\psi\circ \phi)(\vec a)-\vec a\|<\epsilon$, where $S$ is the operator system generated by $\vec a$.  (This is not literally the definition of good nuclear witnesses given in \cite{GS}; however,  it is remarked to be equivalent to the original definition after the proof of \cite[Theorem 3.7]{GS}.)

\begin{fact}[Theorem 3.7 in \cite{GS}]\label{gnw} KEP holds if and only if every satisfiable condition has good nuclear witnesses.
\end{fact}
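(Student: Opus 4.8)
The plan is to read this as a local-global equivalence and to exploit two features of the Cuntz algebra $\O_2$: it is nuclear, so its identity map is approximated by unital completely positive (u.c.p.) maps factoring through matrix algebras, and by Kirchberg's theorem every unital separable nuclear C$^*$-algebra embeds unitally into $\O_2$. The step common to both directions is a saturation reduction. Since $\O_2^\u$ is $\aleph_1$-saturated, a separable C$^*$-algebra $A$ embeds into $\O_2^\u$ if and only if the quantifier-free type of a countable generating sequence is approximately finitely satisfiable in $\O_2$; unwinding this, $A\hookrightarrow\O_2^\u$ iff every condition $p(\vec x)$ satisfied by a finite tuple from $A$ is approximately satisfiable in $\O_2$, which by the fundamental theorem of ultraproducts is the same as being satisfiable in $\O_2^\u$. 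Thus KEP is equivalent to the assertion that every satisfiable condition is approximately satisfiable in $\O_2$, and the whole content reduces to comparing this with the good-nuclear-witness property.

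For the forward direction, assume KEP and let $p(\vec x)$ be satisfiable, witnessed by $\vec a_0$ in the separable C$^*$-algebra $A:=C^*(\vec a_0)$. By KEP there is an embedding $A\hookrightarrow\O_2^\u$, and since embeddings preserve quantifier-free formulae, the image $\vec a=(\vec a_m)^\bullet$ satisfies $p$. Fix $\e>0$. Because $\O_2$ is nuclear it has the completely positive approximation property, so for each $m$ there are $k_m$ and u.c.p.\ maps $\phi_m\colon S_m\to M_{k_m}$ and $\psi_m\colon M_{k_m}\to\O_2$, with $S_m$ the operator system generated by $\vec a_m$, making $\|(\psi_m\circ\phi_m)(\vec a_m)-\vec a_m\|$ as small as we like. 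Since $\vec a$ satisfies the finitely many strict inequalities of $p$, {\L}o\'{s}'s theorem gives $\varphi(\vec a_m)<\e$ for $\u$-almost all $m$ and each inequality $\varphi<\e$ of $p$; intersecting with the indices where the factorization is $\e$-good and fixing such an $m$ produces a tuple $\vec a_m\in\O_2\subseteq\B(H)$ that satisfies $p$ together with the required matrix factorization, which is exactly a good nuclear witness.

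For the converse, assume every satisfiable condition has good nuclear witnesses and let $A$ be separable; by the reduction above it suffices to approximately satisfy in $\O_2$ each condition $p$ that matches, to a prescribed tolerance and over finitely many quantifier-free formulae, the quantifier-free type of a finite tuple $\vec a$ from $A$. The genuine difficulty is that a good nuclear witness controls only the \emph{operator-system} (linear, order, and norm) structure, while realizing a quantifier-free type requires control of the \emph{multiplicative} structure, and the maps $\phi,\psi$ are not multiplicative. I would circumvent this by enriching $p$: introduce an auxiliary variable $x_w$ for each word $w$ in the generators up to the relevant degree, together with inequalities forcing $\|x_{w_1}x_{w_2}-x_{w_1w_2}\|$, $\|x_{w^*}-x_w^*\|$ small and recording the relevant norms. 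This enriched condition is still satisfied by the actual words in $\vec a$, so by hypothesis it has a good nuclear witness $\vec a'$ whose generated operator system $S'$ carries an approximate u.c.p.\ matrix factorization. Using the unital inclusions $M_k\hookrightarrow\O_2$ together with Kirchberg's embedding theorem applied to the nuclear model extracted from the factorization data, one manufactures a tuple in $\O_2$ approximately satisfying the enriched, hence the original, condition $p$, and saturation then assembles these into the embedding $A\hookrightarrow\O_2^\u$.

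The main obstacle, and the step I expect to require the most care, is precisely this backward passage: upgrading an approximate \emph{completely positive} factorization of a witness into an honest, multiplication-respecting approximate realization inside $\O_2$. The enrichment by word-variables is designed to force $\psi\circ\phi$ to be nearly multiplicative on the words that matter, and a reindexing/diagonal argument across the $\e\to 0$ witnesses should be needed to glue these near-multiplicative matrix factorizations into a genuinely nuclear C$^*$-algebra whose quantifier-free type agrees with that of $\vec a$; invoking Kirchberg's $\O_2$-embedding theorem then lands this model in $\O_2^\u$. Verifying that the enrichment really does make the factorization behave multiplicatively on the relevant words up to the required error, uniformly as the tolerance shrinks, is the crux of the whole argument.
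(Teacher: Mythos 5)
First, a point of order: the paper does not prove this statement at all --- it is quoted as Fact \ref{gnw} directly from \cite[Theorem 3.7]{GS}, so the comparison has to be with the argument there. Your setup (the saturation reduction showing that KEP is equivalent to every satisfiable condition being satisfiable in $\O_2$) and your forward direction are correct and match \cite{GS}: embed $A$ into $\O_2^\u$, push the tuple down to $\O_2$ by \L o\'s, and invoke the CPAP of $\O_2$ to produce the matrix factorization. No complaints there.

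The converse, which you correctly flag as the crux, has a genuine gap that your proposed repair does not close. A good nuclear witness for a condition $p$ is a single tuple $\vec a\in\B(H)$ together with one approximate u.c.p.\ factorization of that tuple through one matrix algebra; the C$^*$-algebra $C^*(\vec a)$ generated by the witness need not be nuclear (nuclearity requires such factorizations for \emph{every} finite subset of the algebra and every tolerance, not just for a generating tuple), so there is no ``nuclear model extracted from the factorization data'' to which Kirchberg's $\O_2$-embedding theorem can be applied. Enriching $p$ with word-variables only makes $\psi\circ\phi$ nearly multiplicative on finitely many words of one witness; the witnesses for different tolerances are different tuples in possibly unrelated algebras, and nothing in the hypothesis provides the coherence needed to assemble them into a single separable nuclear algebra realizing the quantifier-free type of $\vec a$. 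That coherence is exactly what the Omitting Types Theorem / model-theoretic forcing machinery of \cite{GS} supplies: one builds an increasing chain of conditions, using the good-nuclear-witness hypothesis at every stage to install, for each finite tuple of the limit structure and each $n$, an approximate matrix factorization (these are the types $\Gamma_{m,n}$ being omitted), while simultaneously securing existential closedness. The generic limit is then a separable, nuclear, locally universal C$^*$-algebra, and only at that point does Kirchberg's theorem embed it into $\O_2$ and yield KEP. Your ``reindexing/diagonal argument'' gestures at this, but working one condition and one witness at a time, as your proposal does, cannot produce it; the iterated construction is not an optional refinement but the substance of the proof.
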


One last bit of notation:  set $\EX_n := G^{-1}(\Ex_n)$.  We are now ready to state our new equivalent reformulation of KEP.

\begin{thm}\label{KEPnew} KEP holds if and only if $\EX_n$ is weakly dense in $\X_n$ for all $n$.
\end{thm}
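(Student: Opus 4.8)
The plan is to prove the two implications separately, using Proposition \ref{mci} to translate weak convergence in $\X_n$ into the existence of a single map into an ultraproduct inducing a $*$-isomorphism of the generated C$^*$-algebras, and using Fact \ref{gnw} to replace KEP by the assertion that every satisfiable condition has good nuclear witnesses. The guiding principle is that an element of the weak closure of $\EX_n$ is exactly one whose generating tuple admits, on the one hand, a realization with the prescribed approximate C$^*$-type (the multiplicative data, controlled by the $*$-isomorphism in Proposition \ref{mci}) and, on the other hand, an approximate factorization through matrices (the operator-space data, coming from 1-exactness). These are precisely the two ingredients in the definition of good nuclear witnesses, and separating them is what makes the equivalence go through.

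For the forward direction, assume KEP and fix $E\in\X_n$; we may assume $C^*(E)$ is separable. By KEP there is a unital $*$-embedding $\iota\colon C^*(E)\hookrightarrow \O_2^{\u}=\prod_{\u}\O_2$. Lifting a basis of the $n$-dimensional space $\iota(E)$ to bounded sequences in $\O_2$ and taking spans, we write $\iota(E)=\prod_{\u}E_i$ with each $E_i\subseteq\O_2$ an $n$-dimensional operator subspace for $\u$-almost every $i$ (the remaining indices may be filled in arbitrarily). Since $\O_2$ is nuclear, hence exact, every finite-dimensional operator subspace of $\O_2$ is 1-exact, so $E_i\in\EX_n$. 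The map $\iota|_E\colon E\to\prod_{\u}E_i$ is the restriction of a $*$-isomorphism onto its image, so by Proposition \ref{mci} we have $E_i\to E$ weakly. As $E$ and $n$ were arbitrary, $\EX_n$ is weakly dense in $\X_n$ for all $n$.

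For the reverse direction, assume $\EX_n$ is weakly dense for every $n$; by Fact \ref{gnw} it suffices to show that an arbitrary satisfiable condition $p(\vec x)$ has good nuclear witnesses. Fix a realization $\vec a$ of $p$ inside some $\B(H)$, let $E$ be the finite-dimensional operator system it generates, say $E\in\X_n$, and apply weak density: there are $E_i\in\EX_n$ and, by Proposition \ref{mci}, a map $\phi\colon E\to\prod_{\u}E_i$ inducing a $*$-isomorphism $C^*(E)\to C^*(\phi(E))$. Write $\phi(\vec a)=(\vec b_i)^{\bullet}$ with $\vec b_i\in E_i$. Because $\phi$ is multiplicative on $C^*(E)$, the element $(\vec b_i)^{\bullet}$ has the same C$^*$-type as $\vec a$, so for $\u$-almost every $i$ the tuple $\vec b_i$ already satisfies $p$. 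Independently, the 1-exactness of the operator space $E_i$ provides, for any prescribed error, a factorization of $E_i$ through a matrix algebra by completely contractive maps recovering $\vec b_i$ up to that error (the characterization of 1-exactness used in the proof of Theorem \ref{fae-ot}). Converting these completely contractive maps into u.c.p.\ maps via the Paulsen system together with the CP-rigidity of matrix algebras (Lemma \ref{CP-rigid} and Corollary \ref{CP-rigid-opspace}), we obtain, for each $\e>0$, an index $i$ for which $\vec b_i$ simultaneously satisfies $p$ and admits u.c.p.\ maps $\Phi\colon S\to M_k$ and $\Psi\colon M_k\to\B(H)$ with $\|(\Psi\circ\Phi)(\vec b_i)-\vec b_i\|<\e$, where $S$ is the operator system generated by $\vec b_i$. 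This is exactly a good nuclear witness for $p$, and since $\e$ was arbitrary, $p$ has good nuclear witnesses; KEP follows by Fact \ref{gnw}.

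I expect the main obstacle to lie in the reverse direction, in passing from the purely operator-space factorization furnished by 1-exactness of $E_i$ to the unital, completely positive factorization demanded by good nuclear witnesses, while keeping the recovered tuple norm-close to $\vec b_i$ and ensuring the approximation error can be driven to $0$ through the Paulsen-system conversion. All the needed tools are already in place (the CP-rigidity of $M_k$, its operator-space corollary, and the small-perturbation bookkeeping), so this should be a matter of careful constant-chasing rather than a new idea; the conceptual content is concentrated in the clean separation, via Proposition \ref{mci}, of the multiplicative constraint (handled by the $*$-isomorphism) from the factorization constraint (handled by 1-exactness together with the nuclearity of $\O_2$).
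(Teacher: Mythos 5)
Your proposal is correct and follows essentially the same route as the paper: both directions hinge on Proposition \ref{mci} together with Fact \ref{gnw}, with the reverse direction reducing to the observation that some $\vec b_i$ in a $1$-exact $E_i$ satisfies $p$ by \L{}o\'s. The only (harmless) deviations are that you build the $1$-exact approximants in the forward direction by decomposing a representative sequence of the KEP embedding into $\O_2^\u$ directly, where the paper instead invokes \cite[Proposition 3.5]{GS} to produce exact witnesses for an exhaustion of the quantifier-free type, and that you spell out the c.c.-to-u.c.p.\ conversion which the paper absorbs into the stated equivalence of the two formulations of ``good nuclear witnesses.''
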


\begin{proof} 
First suppose that KEP holds and let $E$ be an $n$-dimensional operator space contained in the unital C$^*$-algebra $A$.  Fix a basis $x=x_1,\ldots,x_n$ of $E$ and let $p_i \subset p_{i+1} \subset \dotsb$ be an increasing set of conditions satisfied by $x$ so that $\bigcup_i \phi_i$ is dense in the quantifier-free type of $x$.  By \cite[Proposition 3.5]{GS}, for each $i$, there are exact C$^*$-algebras $A_i$ and tuples $a_i$ from $A$ satisfying each $p_i$.  Without loss of generality, we may assume that each $a_i$ is linearly independent. 
  Let $E_i$ denote the (exact) operator subspace of $A_i$ generated by $a_i$.  Fix a nonprincipal ultrafilter $\u$ on $\n$.  It is then straightforward to see that there is a linear bijection between $E$ and $\prod_\u E_i$ which induces a $\ast$-isomorphism between $C^*(E)$ and $C^*(\prod_\u E_i)$.  It follows that $E_i$ converges weakly to $E$ in $\X_n$.  

In order to prove the converse, by Fact \ref{gnw} it suffices to show that every satisfiable condition has good nuclear witnesses.  Towards that end, let $p(\vec x)$ be a satisfiable condition.  Let $\vec a$ be a tuple from a C$^*$-algebra $A$ that satisfies $p(\vec x)$ and suppose that $E$ is the span of $\vec a$.  If $E$ is $n$-dimensional, then by assumption there exists a sequence $E_i\in \EX_n$ so that $E_i\to E$ weakly.  By Proposition \ref{mci}, there is an $i$ for which $p(\vec x)$ is satisfied by some tuple $\vec a_i$ from $E_i$.  Since $E_i$ is a 1-exact operator system, it follows that $p(\vec x)$ has good nuclear witnesses.

\end{proof}

In light of Theorem \ref{KEPnew} and the fact that $\Ex_n$ is weakly dense in $\OS_n$ for each $n$, a positive answer to the following question would imply that KEP holds:

\begin{question}
Is the forgetful map $G:\X_n\to \OS_n$ open for each $n$?
\end{question}

\section{The Local Lifting Property}\label{LLP}

Recall that a unital C$^*$-algebra $A$ has the \emph{local lifting property} (in the sense of Kirchberg) if, for every unital C$^*$-algebra $C$, every closed ideal $J$ of $C$, every u.c.p.\  map $\phi:A\to C/J$, and every finite-dimensional operator subsystem $X$ of $A$, there is a u.c.p.\  lift $\tilde{\phi}$ of $\phi|X$, that is $\tilde{\phi}:X\to C$ is u.c.p.\ and $q\circ \tilde{\phi}=\phi|X$, where $q:C\to C/J$ is the canonical quotient map.  Replacing $A$ by an operator system, we obtain the notion of the local lifting property for operator systems, denoted osLLP in \cite{kavruk}.

\begin{fact}[Kavruk \cite{kavruk}]\label{kavrukfact}
If $X$ is a finite-dimensional operator system, then $X$ is 1-exact if and only if $X^*$ has osLLP.
\end{fact}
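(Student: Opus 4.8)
The plan is to translate both halves of the equivalence into statements about the coincidence of two operator system tensor norms, and then to play these tensor characterizations off against one another using the duality of tensor products for finite-dimensional operator systems. Throughout, $X^*$ denotes the \emph{operator system dual} of the finite-dimensional operator system $X$, equipped with its dual matrix ordering and canonical Archimedean order unit; recall that for finite-dimensional $X$ this is again an operator system and that $X \mapsto X^*$ is an involutive contravariant bijection on the category of finite-dimensional operator systems.

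First I would invoke the tensor-theoretic characterizations of each property coming from Kavruk's theory \cite{kavruk}. On the one hand, a finite-dimensional operator system $X$ is $1$-exact if and only if it is $(\min,\mathrm{el})$-nuclear, i.e.\ the natural map $X \otimes_{\min} T \to X \otimes_{\mathrm{el}} T$ is a complete order isomorphism for every operator system $T$. On the other hand, an operator system $S$ has osLLP if and only if it is $(\mathrm{er},\max)$-nuclear, i.e.\ $S \otimes_{\mathrm{er}} T = S \otimes_{\max} T$ completely order isomorphically for every operator system $T$. In each case it is enough to test the identity against finite-dimensional $T$, since the min, el, er, and max tensor norms on $S \otimes T$ are all determined by their restrictions to finite-dimensional operator subsystems of the factors.

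Next I would apply the tensor-product duality for finite-dimensional operator systems (Kavruk, building on Kavruk--Paulsen--Todorov--Tomforde): the operation $(-)^*$ interchanges the relevant pairs of tensor norms, in the sense that
$$(X \otimes_{\min} T)^* = X^* \otimes_{\max} T^* \qquad\text{and}\qquad (X \otimes_{\mathrm{el}} T)^* = X^* \otimes_{\mathrm{er}} T^*$$
as complete order isomorphisms, for all finite-dimensional $X$ and $T$. Since passing to the dual is a complete order isomorphism onto its range and is involutive on finite-dimensional systems, the coincidence $X \otimes_{\min} T = X \otimes_{\mathrm{el}} T$ holds if and only if its dual coincidence $X^* \otimes_{\max} T^* = X^* \otimes_{\mathrm{er}} T^*$ holds.

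Finally I would let $T$ range over all finite-dimensional operator systems. Because $T \mapsto T^*$ is a bijection of this class onto itself, the family of identities $\{\,X \otimes_{\min} T = X \otimes_{\mathrm{el}} T\,\}_T$ holds for all $T$ if and only if the family $\{\,X^* \otimes_{\mathrm{er}} S = X^* \otimes_{\max} S\,\}_S$ holds for all $S$. By the two characterizations of the first step, the former is precisely $1$-exactness of $X$ and the latter is precisely osLLP of $X^*$, which is the desired equivalence. The main obstacle is entirely contained in the inputs to the first two steps: establishing the tensor-norm characterizations of $1$-exactness and of osLLP, and proving that $(-)^*$ swaps the pair $(\min,\max)$ and the pair $(\mathrm{el},\mathrm{er})$. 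These are the genuinely deep structural results of the operator-system tensor theory; granting them, the deduction above is purely formal. A secondary point requiring care is the reduction of the ``for all operator systems $T$'' quantifier to finite-dimensional $T$, so that the finite-dimensional duality is legitimately applicable on both sides of the argument.
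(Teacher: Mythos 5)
You should first be aware that the paper does not prove this statement at all: it is imported verbatim as a black-box Fact from Kavruk's paper \cite{kavruk}, so there is no internal proof to compare against. Your proposal is therefore really a reconstruction of Kavruk's argument, and while it lives in the right territory (operator system tensor norms and finite-dimensional duality), it contains a concrete error and two unsupported reductions. The error: osLLP is characterized by $(\min,\mathrm{er})$-nuclearity, not $(\mathrm{er},\max)$-nuclearity. Indeed, osLLP is equivalent to $S\otimes_{\min}\B(H)=S\otimes_{\max}\B(H)$, and since $\B(H)$ is injective one has $S\otimes_{\mathrm{er}}\B(H)=S\otimes_{\max}\B(H)$ for free, so the content is the identification of $\min$ with $\mathrm{er}$. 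The condition $S\otimes_{\mathrm{er}}T=S\otimes_{\max}T$ for all $T$ is a different (and much stronger) statement --- unwinding the definition of $\mathrm{er}$, it says that every $T$ satisfies $T\otimes_{\max}S\subseteq_{\mathrm{coi}}I(T)\otimes_{\max}S$, which by Kavruk's own results forces every operator system to have the double commutant expectation property when $S$ is, say, the span of the free generators in $C^*(\f_2)$; that system has the lifting property but is not $(\mathrm{er},\max)$-nuclear. With the correct characterizations --- exact $=(\min,\mathrm{el})$, osLLP $=(\min,\mathrm{er})$ --- your duality bookkeeping no longer closes: dualizing $X\otimes_{\min}T=X\otimes_{\mathrm{el}}T$ via $(X\otimes_{\min}T)^*=X^*\otimes_{\max}T^*$ produces a statement comparing $\max$ with the (hypothetical) dual of $\mathrm{el}$, which is not the $(\min,\mathrm{er})$ identity you need.

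Two further steps would fail even granting the characterizations. First, the reduction of ``for all $T$'' to finite-dimensional $T$ is illegitimate on the $\max$ and $\mathrm{er}$ sides: the max tensor product is not injective, so its restriction to $S\otimes T_0$ for $T_0\subseteq T$ finite-dimensional need not be $S\otimes_{\max}T_0$, and the whole point of lifting-type properties is that they are detected by the infinite-dimensional test object $\B(H)$ rather than by finite-dimensional subsystems. Second, the duality $(X\otimes_{\mathrm{el}}T)^*=X^*\otimes_{\mathrm{er}}T^*$ is asserted without justification; only the $\min/\max$ duality for finite-dimensional systems (Farenick--Paulsen) is standard. Kavruk's actual proof avoids all of this: for finite-dimensional $X$, completely positive maps $X^*\to T$ correspond to positive elements of $X\otimes_{\min}T$ (and likewise at all matrix levels), so liftability of u.c.p.\ maps $X^*\to C/J$ translates into surjectivity of the positive cones $(X\otimes_{\min}C)^+\to(X\otimes_{\min}(C/J))^+$, which is precisely the quotient formulation of exactness of $X$. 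If you want to salvage your approach, that c.p.-map/positive-element correspondence is the tool to substitute for the el/er duality.
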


\begin{cor}
The operator systems with osLLP are not uniformly definable by a sequence of types.
\end{cor}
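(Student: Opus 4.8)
The plan is to combine the duality fact just stated (Fact \ref{kavrukfact}) with the theorem from Section 3 that 1-exactness is not uniformly definable by a sequence of types (the operator system version of Theorem \ref{notomittingtypes}), together with the fact that the dualization map on finite-dimensional operator systems is a weak homeomorphism. The strategy is contrapositive: I would assume, towards a contradiction, that the class of operator systems with osLLP is uniformly definable by a sequence of types, and then manufacture a uniform sequence of types defining 1-exactness, contradicting the known non-definability.

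First I would reduce to the finite-dimensional setting, since Fact \ref{kavrukfact} is stated only for finite-dimensional operator systems. Fixing $n$ (in the relevant range, $n \geq 5$ for the operator system version of Theorem \ref{notGde}), the key geometric input is that the duality map $E \mapsto E^*$ on $\OSy_n$ is an isometric bijection in the weak topology; this is exactly the ingredient invoked in the proof of Theorem \ref{notGde}. Under this homeomorphism, Fact \ref{kavrukfact} says that $E \in \Ex_n$ if and only if $E^* \in \{X : X \text{ has osLLP}\}$. Consequently, if the osLLP operator systems were uniformly definable by a sequence of types, then, by the same argument as in the proof of Theorem \ref{notomittingtypes}, the set of $n$-dimensional osLLP operator systems would be weakly $G_\de$ in $\OSy_n$; pulling back along the self-homeomorphism $E \mapsto E^*$, the set $\Ex_n$ of $n$-dimensional 1-exact operator systems would also be weakly $G_\de$. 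This contradicts the operator system version of Theorem \ref{notomittingtypes}.

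The step I expect to require the most care is the translation of ``uniformly definable by a sequence of types'' across the duality map at the level of \emph{formulae} rather than merely at the level of the resulting $G_\de$ sets. One must check that the self-adjoint/unital structure used to define osLLP types in the language of operator systems transports, under dualization, to formulae defining 1-exactness; the cleanest route is to avoid transporting formulae directly and instead run the forgetful-map argument of Theorem \ref{notomittingtypes} to conclude that osLLP is weakly $G_\de$, and only \emph{then} apply the weak homeomorphism $E \mapsto E^*$ to conclude the same for $\Ex_n$. This keeps the model-theoretic bookkeeping on the osLLP side and relegates the duality to a purely topological statement about $\OSy_n$, where it is already established. The remaining routine point is simply to confirm that the relevant weak homeomorphism and the $G_\de$-preservation argument apply verbatim in the operator system category, which is guaranteed by the final remark of Section 2.
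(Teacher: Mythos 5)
Your proposal is correct and follows essentially the same route as the paper: assume osLLP is uniformly definable by types, run the forgetful-map argument of Theorem \ref{notomittingtypes} to conclude the $n$-dimensional osLLP systems are weakly $G_\de$, then transport via the dual homeomorphism and Fact \ref{kavrukfact} to conclude $\Ex_n$ is weakly $G_\de$. The only quibble is that the final contradiction is with the corollary to (the operator system version of) Theorem \ref{notGde} --- namely that $\Ex_n$ is not weakly $G_\de$ for $n\geq 5$ --- rather than with Theorem \ref{notomittingtypes} itself, but this is a labeling slip, not a gap.
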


\begin{proof}
As in the proof of Theorem \ref{notomittingtypes} and the remark following it, if the operator systems with osLLP are uniformly definable by a sequence of types, we would have that $\{E\in \OSy_n \ : \ E \text{ has osLLP}\}$ is weakly $G_\delta$.  By taking operator system duals and using Fact \ref{kavrukfact}, we would have that the $1$-exact operator systems would be weakly $G_\delta$, obtaining a contradiction.
\end{proof}

As in the case of exactness, the question of whether or not LLP for C$^*$-algebras is uniformly definable by a sequence of types is open.  In the remainder of this section, we show that LLP is $\la_{\omega_1,\omega}$-axiomatizable in the language of C$^*$-algebras.

For the results stated in this section, it is necessary to make an innocuous addition to the language of C$^*$-algebras, namely we add a sort for $U(A)^\n$, the set of countably infinite sequences of unitaries from $A$.  Since the set of unitaries in a C$^*$-algebra is (0-)definable and taking countably infinite products is part of the construction of the expansion of $A$ by adding imaginaries, this change really is harmless.

\begin{df} We say that a unital C$^*$-algebra $A$ has the \emph{approximate local lifting property} (ALLP) if, for every u.c.p.\ map $\phi: A\to C/J$, every finite-dimensional subspace $E\subset A$, and every $\e>0$ there is a map $\tilde\phi: E\to C$ with $\|\tilde\phi\|_{\cb}<1 + \e$ and satisfying $\|\phi|_E - \pi_J\circ\tilde\phi |_E\|<\e$.
\end{df}

The ALLP seems a priori weaker than LLP; however, thanks to Kirchberg's tensorial characterization of the LLP \cite{kirchberg} they may be seen to coincide.

\begin{prop}\label{LLPreformulations} Let $A$ be a unital, separable C$^*$-algebra. The following statements are equivalent:
\begin{enumerate}
\item  $A$ has the LLP;
\item $A$ has the ALLP;
\item for each finite-dimensional subspace $E\subset A$ and $\e>0$, there exists a $\ast$-homomorphism $\pi: C^*(\f_\infty)\to A$ and a map $\phi: E\to C^*(\f_\infty)$ with $\|\phi\|_{\cb}<1+\e$ and $\|\pi\circ\phi - \id_E\|<\e$;
\item $A\otimes \B(H) = A\otimes_{max} \B(H)$.
\end{enumerate}
\end{prop}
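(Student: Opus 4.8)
The plan is to prove the cycle of implications $(1)\Rightarrow(2)\Rightarrow(3)\Rightarrow(4)\Rightarrow(1)$. The implication $(4)\Rightarrow(1)$ is exactly Kirchberg's tensorial characterization of the LLP \cite{kirchberg}, so I would simply invoke it. The implication $(1)\Rightarrow(2)$ is the formal direction: given a finite-dimensional subspace $E\subset A$, pass to the finite-dimensional operator system $X:=E+E^*+\bC 1$ that it generates, apply the LLP to obtain a u.c.p.\ lift $\tilde\Phi:X\to C$ of $\phi|_X$, and restrict to $E$. Since u.c.p.\ maps are completely contractive we have $\|\tilde\Phi|_E\|_{\cb}\leq 1<1+\e$, and since $q\circ\tilde\Phi=\phi|_X$ the lift is exact on $E$, so the approximation error is $0<\e$. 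Thus $(2)$ holds with room to spare.

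For $(2)\Rightarrow(3)$ I would use a fixed presentation of $A$. As $A$ is separable and unital it is generated by a countable family of unitaries, so there is a surjective unital $\ast$-homomorphism $q:C^*(\f_\infty)\to A$; write $J:=\ker q$, so that $C^*(\f_\infty)/J\cong A$. Applying the ALLP to the u.c.p.\ map $\id_A:A\to C^*(\f_\infty)/J$, to a finite-dimensional subspace $E\subset A$, and to $\e>0$, produces a map $\phi:E\to C^*(\f_\infty)$ with $\|\phi\|_{\cb}<1+\e$ and $\|\id_E-q\circ\phi\|<\e$. Taking $\pi:=q$ gives exactly the data required in $(3)$; note that here the homomorphism $\pi$ can even be chosen independently of $E$ and $\e$.

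The implication $(3)\Rightarrow(4)$ is the heart of the matter and the step I expect to be the main obstacle. I would fix $z=\sum_{i} a_i\otimes d_i\in A\odot\B(H)$ and aim to show $\|z\|_{\max}\leq\|z\|_{\min}$, the reverse inequality being automatic. Let $E:=\span\{a_i\}$ and, for $\e>0$, produce $\pi:C^*(\f_\infty)\to A$ and $\phi:E\to C^*(\f_\infty)$ as in $(3)$. The key external input is that $C^*(\f_\infty)$ itself has the LLP, so $\|\cdot\|_{\max}=\|\cdot\|_{\min}$ on $C^*(\f_\infty)\odot\B(H)$. Setting $w:=(\phi\otimes\id)(z)=\sum_i\phi(a_i)\otimes d_i$ and using that $\pi\otimes\id$ is a $\ast$-homomorphism (hence $\max$-contractive) while $\phi\otimes\id$ is $\min$-bounded by $\|\phi\|_{\cb}$, I get $\|(\pi\circ\phi\otimes\id)(z)\|_{\max}\leq\|w\|_{\max}=\|w\|_{\min}\leq(1+\e)\|z\|_{\min}$.

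It then remains to compare $(\pi\circ\phi\otimes\id)(z)$ with $z$. The subtlety is that the $\max$-tensor is not functorial for completely bounded maps, so I cannot estimate $((\id-\pi\circ\phi)\otimes\id)(z)$ through a cb-norm. Instead I would use only the crude triangle-inequality bound valid in any C$^*$-tensor norm, namely $\|\sum_i c_i\otimes d_i\|_{\max}\leq\sum_i\|c_i\|\,\|d_i\|$. Applying this with $c_i=\pi(\phi(a_i))-a_i$ and using $\|\pi\circ\phi-\id_E\|<\e$ yields $\|(\pi\circ\phi\otimes\id)(z)-z\|_{\max}\leq\e\sum_i\|a_i\|\,\|d_i\|$, which tends to $0$ as $\e\to0$ for the fixed element $z$. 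Combining the two estimates and letting $\e\to0$ gives $\|z\|_{\max}\leq\|z\|_{\min}$, establishing $(4)$ and closing the cycle. The only genuinely delicate point is recognizing that the failure of $\max$-functoriality is harmless here because the relevant difference is a fixed finite sum to which the elementary triangle-inequality bound applies.
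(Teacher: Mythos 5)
Your proposal is correct and follows essentially the same route as the paper: the implications $(1)\Rightarrow(2)\Rightarrow(3)$ are handled as routine, $(4)\Leftrightarrow(1)$ is delegated to Kirchberg, and $(3)\Rightarrow(4)$ is proved by exactly the paper's argument --- factoring a fixed elementary tensor $z$ through $C^*(\f_\infty)$, using that $C^*(\f_\infty)$ has the LLP so that $\min=\max$ on $C^*(\f_\infty)\odot\B(H)$, that $\pi\otimes\id$ is max-contractive, and controlling the error term by the crude cross-norm triangle-inequality bound. No gaps.
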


\begin{proof} The implications $(1)\Rightarrow (2)\Rightarrow (3)$ are obvious. The implication $(4)\Leftrightarrow (1)$ is a deep theorem of Kirchberg (\cite[Proposition 1.1(ii)]{kirchberg}). We will demonstrate that $(3)\Rightarrow (4)$.  For brevity of notation, let $C = C^*(\f_\infty)$.

Choose $z = \sum_{i=1}^K z_i\otimes b_i\in A\odot \B(H)$. We will show that $\|z\|_{min} = \|z\|_{max}$. Let $E := \operatorname{span} \{z_1,\dotsc, z_n\}$. By (3), for each $m$, we may fix a $\ast$-homomorphism $\pi_m: C\to A$ and linear map $\phi_m: E\to C$ with $\|\tilde\phi_m\|_{\cb}< 1+ \tfrac{1}{m}$ and so that $\|(\pi_m\circ\phi_m) - \id_E\|< \tfrac{1}{m}$. Since $\cb$-norms are well-behaved with respect to the minimal tensor product (see, for example, \cite[Theorem 3.5.3]{BO}), we have, for $\phi_m\otimes\id : E\otimes \B(H)\to C\otimes \B(H)$, that $\|\phi_m\otimes\id\|_{\cb}\leq 1 + \tfrac{1}{m}$. Since $C$ has LLP (see \cite[Lemma 2.1]{kirchberg}), we have that $C\otimes \B(H) \cong C\otimes_{max} \B(H)$.  By universality of the maximal tensor product, $\pi_m\otimes \id: C\otimes_{max} \B(H)\to A\otimes_{max} \B(H)$ is a $\ast$-homomorphism, whence contractive.  Let $z_m := ((\pi_m\circ\phi_m)\otimes \id)(z)$.  It follows that $\|z_m\|_{max}\leq (1 + \tfrac{1}{m})\|z\|_{min}$. On the other hand, since $\otimes_{max}$ is a cross-norm we have by the triangle inequality that \[\|z - z_m\|_{max}\leq K\max_i \|b_i\| \max_j \|(\pi_m\circ\phi_m)(z_j) -z_j\|\leq \tfrac{K}{m} \max_i \|b_i\|,\] and we are done.
\end{proof}

The proof of the following corollary is analogous to the proof of \cite[Corollary 3.9]{qwep} using Proposition \ref{LLPreformulations}; it is not needed in the sequel.

\begin{cor} Let $A$ be a separable C$^*$-algebra, and let $\theta: C^*(\bF_\infty)\to A$ be a $\ast$-homomorphism which sends the canonical unitary generators onto a dense set of unitaries. Then $A$ has the LLP if and only if the identity map on $A$ has an approximate local c.b.\ lift through $\theta$.
\end{cor}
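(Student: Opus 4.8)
The plan is to deduce this from Proposition \ref{LLPreformulations}, the whole point being that one may replace the varying $\ast$-homomorphisms $\pi$ appearing in condition (3) by the single fixed map $\theta$. The reverse implication is immediate: if $\id_A$ admits an approximate local c.b.\ lift through $\theta$, then for each finite-dimensional subspace $E\subset A$ and $\e>0$ there is a map $\phi:E\to C^*(\f_\infty)$ with $\|\phi\|_{\cb}<1+\e$ and $\|\theta\circ\phi-\id_E\|<\e$; taking $\pi:=\theta$ shows that condition (3) of Proposition \ref{LLPreformulations} holds, whence $A$ has the LLP.

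For the forward direction, I would suppose $A$ has the LLP, fix a finite-dimensional $E\subset A$ and $\e>0$, and invoke condition (3) to obtain a $\ast$-homomorphism $\pi:C^*(\f_\infty)\to A$ and a linear map $\phi:E\to C^*(\f_\infty)$ with $\|\phi\|_{\cb}<1+\tfrac{\e}{2}$ and $\|\pi\circ\phi-\id_E\|<\tfrac{\e}{2}$. Since $\phi(E)$ is finite-dimensional, I would first perturb $\phi$ to a map $\phi'$ whose range lies in the $\ast$-subalgebra generated by finitely many of the canonical generators $v_1,\dotsc,v_N$ and is spanned by $\ast$-monomials of degree at most some fixed $d$; choosing this approximation fine enough preserves $\|\phi'\|_{\cb}<1+\tfrac{\e}{2}$ and $\|\pi\circ\phi'-\id_E\|<\e$.

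The key step exploits the hypothesis that $\theta$ sends the canonical generators onto a dense set of unitaries of $A$. Given a tolerance $\eta>0$ to be specified, I would select further generators $w_1,\dotsc,w_N$ of $C^*(\f_\infty)$ with $\|\theta(w_j)-\pi(v_j)\|<\eta$ for each $j$. By freeness, the assignment $v_j\mapsto w_j$ extends to a unital, completely contractive $\ast$-homomorphism $\rho:C^*(v_1,\dotsc,v_N)\to C^*(\f_\infty)$, and I would set $\tilde\phi:=\rho\circ\phi'$, so that $\|\tilde\phi\|_{\cb}\leq\|\phi'\|_{\cb}<1+\e$. Since $\theta\circ\rho$ and $\pi$ are $\ast$-homomorphisms agreeing to within $\eta$ on the generators $v_1,\dotsc,v_N$, they agree to within $C(N,d)\,\eta$ on the bounded set of degree-$\leq d$ $\ast$-monomials spanning $\phi'(E)$, where $C(N,d)$ is a Lipschitz constant depending only on $N$ and $d$. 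Choosing $\eta$ small relative to $\e/C(N,d)$ yields $\|\theta\circ\tilde\phi-\pi\circ\phi'\|<\e$ on $E$, and combining with $\|\pi\circ\phi'-\id_E\|<\e$ gives an approximate local c.b.\ lift through $\theta$ after adjusting the initial tolerances. This follows the strategy of \cite[Corollary 3.9]{qwep}.

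The main obstacle is the estimate in this last step: one must verify that approximate agreement of $\pi$ and $\theta\circ\rho$ on the finitely many generators $v_1,\dotsc,v_N$ propagates to approximate agreement on the fixed finite-dimensional subspace $\phi'(E)$, uniformly over the unit ball of $E$. This is standard but delicate — the elements of $\phi'(E)$ are $\ast$-polynomials of bounded degree in unitaries of norm one, so their evaluation is jointly Lipschitz in the generating unitaries on the unit ball, with a constant controlled by the degree $d$ and the number of variables $N$, and the finite-dimensionality of $E$ supplies the required uniformity.
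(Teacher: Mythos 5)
Your argument is correct, and it is essentially the generator-matching proof that the paper's citation of Ozawa's Corollary 3.9 is pointing at: the reverse direction is exactly condition (3) of Proposition \ref{LLPreformulations} with $\pi=\theta$, and the forward direction transfers the varying $\pi$ supplied by condition (3) to the fixed $\theta$ by approximating $\pi(v_j)$ by images $\theta(w_j)$ of canonical generators and invoking the universal property of $C^*(\f_N)$; the telescoping/equivalence-of-norms estimate you describe for propagating the approximation from the generators to the fixed finite-dimensional space $\phi'(E)$ is the right one. One small repair is needed: to choose $w_j$ with $\|\theta(w_j)-\pi(v_j)\|<\eta$ you need $\pi(v_j)$ to be a unitary of $A$, i.e.\ you need $\pi$ to be unital --- if $\pi(1)=p\neq 1$, then $\pi(v_j)=p\pi(v_j)p$ and $\|(u-\pi(v_j))(1-p)\|=\|u(1-p)\|=1$ for every $u\in U(A)$, so no approximation is possible. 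Condition (3) as stated does not promise unitality, but it comes for free: enlarge $E$ so that $1_A\in E$; then $\|\pi(\phi'(1_A))-1_A\|<\e<1$ forces $\|1-\pi(1)\|<1$, and a projection of norm less than $1$ vanishes, so $\pi$ is unital.

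It is also worth noting that the forward direction admits a one-line proof that bypasses the matching trick entirely: the density hypothesis makes $\theta$ surjective, so $A\cong C^*(\f_\infty)/\ker\theta$ is itself a quotient presentation, and applying the ALLP (condition (2) of Proposition \ref{LLPreformulations}, which the paper notes follows immediately from the LLP) to the u.c.p.\ map $\id_A\colon A\to C^*(\f_\infty)/\ker\theta$ yields the approximate local c.b.\ lift through $\theta$ directly. Your longer route buys nothing extra for this particular statement, but it is the argument that generalizes (it is what one needs when the lifting property is only known for \emph{some} presentation rather than for all quotients), and it is faithful to the strategy of \cite{qwep}.
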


Let $\cS_n\subset C^*(\f_n)$ be the operator system spanned by the canonical unitaries and their inverses.  Let $\cS_{n,k} \subset C^*(\f_n)$ be the operator system which is the $k$-fold product of $\cS_{n,1} := \cS_n$ with itself, i.e., $\cS_{n,k}$ is the linear span of $\{x_1\dotsb x_k : x_i\in \cS_n, i=1,\dotsc,k\}$. 


We refer the reader to \cite{BI} for a treatment of infinitary continuous logic.

\begin{prop}
The LLP is $\la_{\omega_1,\omega}$-axiomatizable in the language of C$^*$-algebras.
\end{prop}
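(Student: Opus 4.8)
The plan is to express the LLP as a countable conjunction (over the finite-dimensional operator systems $\cS_{n,k}$ and rational approximation parameters) of $\la_{\omega_1,\omega}$-sentences, using the reformulation $(1)\Leftrightarrow(3)$ from Proposition \ref{LLPreformulations}. Condition $(3)$ says that for each finite-dimensional $E\subseteq A$ and each $\e>0$ there is a $\ast$-homomorphism $\pi:C^*(\f_\infty)\to A$ and a map $\phi:E\to C^*(\f_\infty)$ with $\|\phi\|_{\cb}<1+\e$ and $\|\pi\circ\phi-\id_E\|<\e$. The key simplifying observation is that, by density, it suffices to verify this for the \emph{specific} finite-dimensional operator systems $\cS_{n,k}$ rather than for all finite-dimensional subspaces; any finite-dimensional $E$ lies (up to small perturbation) in some $\cS_{n,k}$, so local liftability through $C^*(\f_\infty)$ on the $\cS_{n,k}$'s implies it for all $E$.

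The main work is to recast ``there exists a $\ast$-homomorphism $\pi:C^*(\f_\infty)\to A$'' in a first-order (infinitary) way. Here is where the expanded language pays off: specifying such a $\pi$ is equivalent to specifying the images of the canonical unitary generators $u_1,u_2,\dots$, i.e.\ an element of the added sort $U(A)^{\n}$, since $C^*(\f_\infty)$ is the universal unital C$^*$-algebra on a countable sequence of unitaries with no relations. Thus quantifying over $\ast$-homomorphisms $\pi$ becomes quantifying over a single variable $\vec{v}=(v_j)\in U(A)^{\n}$. The map $\phi:\cS_{n,k}\to C^*(\f_\infty)$ and the composite $\pi\circ\phi$ can then be encoded by a finite tuple of matrix-coefficient parameters describing a linear map into the span of words of length $\le k$ in the $u_j$'s; the conditions $\|\phi\|_{\cb}<1+\e$ and $\|\pi\circ\phi-\id\|<\e$ translate into a countable family of quantifier-free (norm) conditions on these coefficients together with the $v_j$'s. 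Quantifying existentially over the (countably many) coefficient parameters and over $\vec v\in U(A)^{\n}$ yields an $\la_{\omega_1,\omega}$-formula, and conjoining over all $n,k$ and over a sequence $\e\to 0$ gives an $\la_{\omega_1,\omega}$-sentence whose satisfaction is exactly condition $(3)$.

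I would assemble this in the following order. First, for fixed $n,k$ and rational $\e>0$, build the matrix $\cb$-norm conditions: the complete boundedness of $\phi$ is controlled by the suprema of norms over all matrix amplifications, each of which is an atomic formula, so $\|\phi\|_{\cb}\le 1+\e$ becomes a countable conjunction $\bigwedge_r(\|\phi\|_r\le 1+\e)$; likewise $\|\pi\circ\phi-\id\|<\e$ is a single norm condition once $\pi$ is given by the $v_j$. Second, I would introduce the existential quantifiers---over the coefficient variables (ranging over the scalar/operator sorts already available) and over $\vec v\in U(A)^{\n}$---to obtain the sentence asserting local liftability of $\cS_{n,k}$ to within $\e$. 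Third, conjoin over all $(n,k)$ and a cofinal sequence of $\e$'s. Finally I would invoke $(1)\Leftrightarrow(3)$ of Proposition \ref{LLPreformulations} to conclude that a separable $A$ satisfies this sentence if and only if $A$ has the LLP.

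The hard part, and the step requiring the most care, is verifying that the complete boundedness condition $\|\phi\|_{\cb}<1+\e$ is genuinely expressible within $\la_{\omega_1,\omega}$ as an \emph{existential} statement over the coefficient parameters. The subtlety is that $\phi$ maps into the infinite-dimensional $C^*(\f_\infty)$, so one must represent $\phi$ by finitely many parameters with range inside a fixed $\cS_{n,k}$ (using that $\cS_{n,k}$ is finite-dimensional) and then check that the resulting countable conjunction of matrix-norm conditions, after the existential quantifier over these parameters is applied, still defines an $\la_{\omega_1,\omega}$-formula with the correct interpretation in all separable models---this is exactly where one must confirm that the $\cb$-norm can be computed through the added $U(A)^{\n}$ imaginaries and that no genuinely second-order quantification over $\pi$ has crept in beyond what the unitary-sequence sort provides.
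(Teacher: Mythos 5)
Your overall architecture matches the paper's: use the equivalence $(1)\Leftrightarrow(3)$ of Proposition \ref{LLPreformulations}, encode the $\ast$-homomorphism $\pi:C^*(\f_\infty)\to A$ by a single variable $\vec v\in U(A)^{\n}$ (this is exactly the paper's move), parametrize the finite-dimensional map $\phi$ by finitely many coefficients landing in some $\cS_{m,k}$, and conjoin countably many sentences. But there is a genuine error in your reduction step: you claim that ``any finite-dimensional $E$ lies (up to small perturbation) in some $\cS_{n,k}$'' and accordingly take $\phi$ to be a map \emph{from} $\cS_{n,k}$ \emph{into} $C^*(\f_\infty)$. This is backwards, and the claim is false: the $\cS_{n,k}$ form a countable family, whereas the $n$-dimensional operator subspaces of arbitrary C$^*$-algebras are not strongly separable for $n\geq 3$ (Junge--Pisier, as used earlier in this paper), so a general finite-dimensional $E\subseteq A$ is not a small perturbation of any $\cS_{n,k}$. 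Moreover, condition $(3)$ requires $\pi\circ\phi$ to approximate the identity on the concrete copy of $E$ inside $A$, so the domain cannot be replaced by an abstract model. The correct role of the $\cS_{m,k}$ is as a dense exhaustion of the \emph{codomain} $C^*(\f_\infty)$: by the small perturbation argument one may assume $\phi:E(\vec a)\to\cS_{m,k}$, while the domain must still be quantified over via $\sup_{\vec x}$ over tuples from $A$, as in the paper's sentence $\psi_n=\sup_{\vec x}\inf_{\vec v}\inf_{m,k}\sup_\ell\chi_{n,m,k,\ell}(\vec v,\vec x)$.

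Two further points you should confront if you repair the direction of the reduction. First, parametrizing linear maps out of $E(\vec a)$ by coefficients is delicate when $\vec a$ is nearly linearly dependent; the paper sidesteps this by invoking Beth definability to declare $\chi_{n,m,k,\ell}(\vec v,\vec a)=\inf_{\phi\in\Phi_{n,m,k}(\vec a)}\max\{\|\phi\|_\ell-1,\|(\pi_{\vec v}\circ\phi)(\vec a)-\vec a\|\}$ a definable predicate, using that $\cS_{m,k}$ is finite-dimensional. Second, the paper's sentence has the shape $\sup_\ell\inf_\phi(\cdots)$ rather than $\inf_\phi\sup_\ell(\cdots)$, so its truth only yields, for each $\ell$, a witness $\phi_\ell$ with $\|\phi_\ell\|_\ell<1+\e$; recovering a single completely bounded $\phi$ requires the ultraproduct trick $\phi=(\phi_\ell)^\bullet:E(\vec a)\to\cS_{m,k}^{\omega}\cong\cS_{m,k}$. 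Your formulation, which puts the countable conjunction over matrix levels inside the existential over a compact coefficient space, would avoid this interchange issue (one extracts an honest witness by compactness), but only once the domain/codomain confusion is fixed.
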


\begin{proof}
Let $A$ be a unital C$^*$-algebra. For an $n$-tuple $\vec a\in A^n$, let $E(\vec a) := \operatorname{span} \{a_1,\dotsc,a_n\}$ and
let $\Phi_{n,m,k}(\vec a)$ denote the set of linear maps $\phi: E(\vec a)\to \cS_{m,k}$.  For $\vec v\in U(A)^\n$, let $\pi_{\vec v}:C^*(\f_\infty)\to A$ denote the $\ast$-homomorphism determined by mapping the canonical unitary $u_i$ of C$^*(\f_\infty)$ to $v_i$ and let $$\chi_{n,m,k,\ell}(\vec v,\vec a):=\inf_{\phi \in \Phi_{n,m,k}(\vec a)}\max\{\|\phi\|_\ell -1, \|(\pi_{\vec v}\circ\phi)(\vec a) - \vec a\|\}.$$

By Beth definability (and the fact that $\cS_{m,k}$ is finite-dimensional), $\chi_{n,m,k,l}$ is a definable predicate in the language of C$^*$-algebras.  Further note that, for a fixed $n\in \n$, all of the formulas $\chi_{n,m,k,\ell}$ have the same modulus of uniform continuity.  Now consider the $\la_{\omega_1,\omega}$-sentence $$\psi_n:=\sup_{\vec x}\inf_{\vec v\in U(A)^{\n}}\inf_{m,k}\sup_\ell\chi_{n,m,k,\ell}(\vec v,\vec x).$$


We claim that $A$ has the LLP if and only if $\psi_n^A=0$ for all $n$. If $A$ has the LLP, it is obvious that $\psi_n^A=0$ for all $n$ by the small perturbation argument as $\bigcup_{m,k} \cS_{m,k}$ is dense in $C^*(\f_\infty)$.

On the other hand, suppose that $\psi_n^A=0$ for each $n$.  We show that $A$ satisfies condition (3) of Proposition \ref{LLPreformulations}. By assumption, for any tuple $\vec a$ and $\e>0$ there exists $v\in U(A)^\n$, $m,k$ and a sequence of maps $\phi_\ell: E(\vec a)\to \cS_{m,k}$ with $\|\phi_\ell\|_\ell<1+\e$ and $\|(\pi_{\vec v}\circ\phi_\ell)(\vec a) - \vec a\|<1+\e$. Now define \[\phi := (\phi_\ell)^\bullet: E(\vec a)\to \cS_{m,k}^\omega \cong \cS_{m,k}.\] It follows that $\|\phi\|_{\cb}\leq 1+\e$ and $\|(\pi_{\vec v}\circ\phi)(\vec a) - \vec a\|\leq \e$.
\end{proof}

Examining the proof of $\la_{\omega_1,\omega}$-axiomatizability for the LLP, it seems natural to formulate a weakening of the LLP:  we say that $A$ had the \emph{weak local lifiting property} (WLLP) if, for each finite-dimensional subspace $E\subset A$ and $k,\e>0$, there exists a $\ast$-homomorphism $\pi: C^*(\f_\infty)\to A$ and a map $\phi: E\to C^*(\f_\infty)$ with $\|\phi\|_{k}<1+\e$ and $\|\pi\circ\phi - \id_E\|<\e$. However, it is a result of Robertson and Smith \cite[Proposition 2.4]{RS}, that \emph{every} C$^*$-quotient map $q: B\to B/J$ admits $n$-positive local liftings for all $n$, whence every C$^*$-algebra has the WLLP.  (We thank the anonymous referee for pointing this fact out.)




We point out a potentially interesting consequence:

\begin{prop}
Let $A$ be a unital, separable C$^*$-algebra.  Then $A$ has LLP if and only if $A$ is CP-stable.
\end{prop}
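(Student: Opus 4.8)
The plan is to prove the equivalence entirely at the level of finite-dimensional subspaces, using condition (3) of Proposition \ref{LLPreformulations} as the bridge. The conceptual point is that the only gap between the \emph{weak} local lifting property---which every C$^*$-algebra enjoys by the Robertson--Smith result \cite{RS} quoted just above---and the full LLP is the difference between controlling the approximating map at a single high matrix level and controlling its complete bound; CP-stability is exactly the property that upgrades the former control to the latter. I will therefore read CP-stability of $A$ as the domain-side dual of the CP-rigidity of Lemma \ref{CP-rigid}: for every finite-dimensional subspace $E\subseteq A$ and every $\e>0$ there are $k\in\n$ and $\delta>0$ so that any unital self-adjoint map $\phi\colon E\to Y$ with $\|\phi\|_k<1+\delta$ admits a u.c.p.\ map $\psi\colon E\to Y$ with $\|\phi-\psi\|<\e$.

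For the implication CP-stable $\Rightarrow$ LLP I would verify condition (3). Fix $E\subseteq A$ and $\e>0$, and let $k,\delta$ witness CP-stability for $E$ and $\e$. By WLLP, applied at level $\ell=k$ and with tolerance smaller than $\delta$, there is a $\ast$-homomorphism $\pi\colon C^*(\f_\infty)\to A$ and a map $\phi\colon E\to C^*(\f_\infty)$ with $\|\phi\|_k<1+\delta$ and $\|\pi\circ\phi-\id_E\|<\delta$. Applying CP-stability with target $Y=C^*(\f_\infty)$ produces a u.c.p.\ map $\phi'\colon E\to C^*(\f_\infty)$ with $\|\phi-\phi'\|<\e$; then $\|\phi'\|_{\cb}=1$ and, since $\pi$ is contractive, $\|\pi\circ\phi'-\id_E\|<2\delta$. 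This is exactly condition (3), so $A$ has LLP. This direction is clean and needs no ultraproduct bookkeeping.

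The reverse implication LLP $\Rightarrow$ CP-stable is the crux. Given a unital $\phi\colon E\to Y$ that is a near-isometry at a high finite matrix level, the natural move is to use the cb-controlled factorization $\id_E\approx\pi\circ\phi_0$ of condition (3) to realize $E$ as cb-close to a finite-dimensional subspace $F=\phi_0(E)$ of $C^*(\f_\infty)$ and to transport $\phi$ across $\phi_0$. The obstruction is that complete positivity is not preserved when one composes a u.c.p.\ map with the merely completely bounded transport map $\phi_0$, so one cannot simply approximate on $F$ and pull back; I expect this to be the main obstacle. I would circumvent it by dualizing: by Kavruk's Fact \ref{kavrukfact}, LLP of $A$ forces each finite-dimensional $E\subseteq A$ to have osLLP, hence $E^*$ to be $1$-exact, and the desired u.c.p.\ approximation of maps \emph{out} of $E$ becomes, under the duality pairing (which preserves complete positivity while exchanging the unital and trace-type normalizations), a CP-rigidity statement for the $1$-exact system $E^*$. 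That statement is in turn reduced to the CP-rigidity of matrix algebras (Lemma \ref{CP-rigid}) together with the near-complete-isometric embedding of $E^*$ into some $M_N$ supplied by $1$-exactness, using Corollary \ref{CP-rigid-opspace} to absorb the distortion. Making the duality bookkeeping and the matricial reduction precise---in particular checking that the positive cone and unitality are correctly matched on both sides of the pairing---is where the real work lies.
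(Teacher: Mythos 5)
Your forward direction (CP-stable together with the universally valid WLLP of Robertson--Smith implies the LLP, via condition (3) of Proposition \ref{LLPreformulations}) is essentially the paper's argument, modulo the small point that CP-stability in the sense of \cite{GS} produces the u.c.p.\ map only on a smaller subsystem $E$ after you feed it an enlargement $E\subset E'$; your statement of CP-stability drops the enlargement $E'$, which makes it a strictly stronger, intrinsic-to-$E$ property, and you should restore it (apply WLLP to $E'$ and CP-stability to the pair $E\subset E'$).

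The reverse direction has a genuine gap at its first step. You claim that LLP of $A$ forces each finite-dimensional operator subsystem $E\subseteq A$ to have osLLP, so that Kavruk's Fact \ref{kavrukfact} makes $E^*$ $1$-exact. But the LLP of $A$ only provides u.c.p.\ lifts of \emph{restrictions} $\phi|_E$ of u.c.p.\ maps $\phi$ defined on all of $A$, whereas osLLP of $E$ demands a lift of an \emph{arbitrary} u.c.p.\ map $\psi\colon E\to C/J$. Since $C/J$ is not injective, $\psi$ need not extend to a u.c.p.\ map $A\to C/J$, so there is no way to invoke the LLP of $A$. Worse, osLLP is an intrinsic invariant of the finite-dimensional operator system $E$ (it is equivalent to $1$-exactness of $E^*$, independently of any embedding), so your claim would assert that no finite-dimensional operator system with non-$1$-exact dual embeds into any LLP C$^*$-algebra --- a strong structural statement that you have not established and that there is no reason to believe. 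The remaining duality step ("exchanging the unital and trace-type normalizations" to turn u.c.p.\ approximation of maps out of $E$ into CP-rigidity of $E^*$) is also only sketched, and it is exactly where the difficulty sits: $E^*$ carries no canonical Archimedean order unit, and the targets $Y$ in CP-stability are arbitrary infinite-dimensional C$^*$-algebras, so one cannot simply pass to adjoint maps into $E^*$. The paper avoids all of this: for LLP $\Rightarrow$ CP-stable it observes that ultrapowers are particular quotients, so LLP implies the local ultrapower lifting property, which \cite{GS} proves is \emph{equivalent} to CP-stability. If you want a self-contained argument you should reprove that equivalence (an ultraproduct/saturation argument over the levels $k$), rather than route through operator system duality.
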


\begin{proof} As just pointed out above, every C$^*$-algebra has the WLLP. It is clear that CP-stability and the WLLP together imply the ALLP, hence the LLP.  Conversely, it is proven in \cite{GS} that CP-stability is equivalent to yet another weakening of LLP, namely the \emph{local ultrapower lifting property} (LULP), where one only requires local lifts for maps into ultrapowers (which are, in fact, particular instances of quotients). 
\end{proof}

The preceding proposition lends itself to a much simpler proof of \cite[Corollary 2.43]{GS}, which states that there is a separable C$^*$-algebra that is not contained in a CP-stable C$^*$-algebra.  Indeed, if $A$ is a WEP C$^*$-algebra that does not have LLP (which exists by the main result of \cite{jungepisier}), then $A$ cannot be contained in an LLP C$^*$-algebra.

As mentioned above, in \cite{GS} it is shown that CP-stability is equivalent to the local ultrapower lifting property, whence, by the previous proposition, we say that the LLP is equivalent to the local ultrapoower lifting property.  We now want to point out that the LLP is equivalent to an a priori weaker property.

\begin{df}
Let $A$ be a unital C$^*$-algebra.  We say that $A$ has the \emph{local matrix ultraproduct lifting property} if, for any u.c.p.\ map $\phi:A\to \prod_\u M_n$ and any finite-dimensional operator system $E\subseteq A$, we have a u.c.p.\ lift of $\phi|_E$.
\end{df}

\begin{prop}
Let $A$ be a unital, separable C$^*$-algebra.  Then $A$ has LLP if and only if $A$ has the local matrix ultraproduct lifting property.
\end{prop}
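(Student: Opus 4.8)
The plan is to prove the equivalence by showing that the local matrix ultraproduct lifting property sits between two characterizations already established as equivalent to LLP. The forward direction (LLP implies local matrix ultraproduct lifting property) is essentially trivial: if $A$ has LLP, then for any u.c.p.\ map $\phi:A\to \prod_\u M_n$ and any finite-dimensional operator system $E\subseteq A$, the quotient map structure of $\prod_\u M_n$ (as a quotient of $\ell^\infty(M_n)$ by the ideal of sequences vanishing along $\u$) means that the definition of LLP directly supplies a u.c.p.\ lift of $\phi|_E$. Indeed, $\prod_\u M_n \cong \ell^\infty(M_n)/c_\u$, where $c_\u$ is a closed ideal, so the local matrix ultraproduct lifting property is literally an instance of the lifting property defining LLP, just restricted to this particular quotient.

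The substance lies in the converse, and here I would exploit the chain of equivalences already in hand. By the preceding Proposition, $A$ has LLP if and only if $A$ is CP-stable, and the discussion there recalls that CP-stability is equivalent to the local ultrapower lifting property (LULP)---where one requires local u.c.p.\ lifts for maps into ultrapowers $A^\u$. So it suffices to show that the local matrix ultraproduct lifting property implies LULP (or directly implies CP-stability). The natural strategy is to reduce an arbitrary ultrapower lifting problem to a matrix ultraproduct one. Given a u.c.p.\ map $\phi:A\to A^\u$ (or into some quotient $C/J$) and a finite-dimensional $E\subseteq A$, I would first use that $A$ is separable to find a sufficiently good finite-dimensional matricial approximation, then invoke the fact---implicit in the earlier sections---that any separable operator system admits approximately multiplicative u.c.p.\ maps into matrix algebras that can be assembled into a map into $\prod_\u M_n$. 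The goal is to factor or approximate the lifting problem so that the hypothesis (lifting against $\prod_\u M_n$) applies and then transfer the resulting lift back.

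The main obstacle I anticipate is precisely this transfer: the local matrix ultraproduct lifting property only guarantees lifts when the \emph{target} of the map is $\prod_\u M_n$, whereas LULP/CP-stability concern arbitrary ultrapowers $A^\u$. To bridge this, I would aim to show that a u.c.p.\ map into $A^\u$ can, for the purposes of local lifting of a fixed finite-dimensional $E$, be replaced by or composed with a map into a matrix ultraproduct. One promising route is to use a u.c.p.\ map $A^\u \to \prod_\u M_n$ or an embedding in the other direction (as constructed in the lemmas of the FAE section, where embeddings $\prod_\u M_n \hookrightarrow A^\u$ with conditional expectation were built), and to compose so that solving the matrix version yields a solution to the original. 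Care is needed because the composition must preserve complete positivity and unitality and must land the lift back in $C$ rather than $C^\u$; here the $\aleph_1$-saturation of ultrapowers and the small perturbation arguments used repeatedly above should allow one to pass from approximate lifts to genuine u.c.p.\ lifts.

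\begin{proof}
The forward direction is immediate, as $\prod_\u M_n$ is a quotient of $\ell^\infty(M_n)$ by a closed ideal, so the local matrix ultraproduct lifting property is a special case of the definition of the LLP.

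For the converse, by the previous proposition it suffices to show that the local matrix ultraproduct lifting property implies CP-stability, equivalently (as recalled above, see \cite{GS}) the local ultrapower lifting property.  So fix a u.c.p.\ map $\phi:A\to A^\u$ and a finite-dimensional operator system $E\subseteq A$; we seek a u.c.p.\ lift of $\phi|_E$ through the quotient $\ell^\infty(A)\to A^\u$.  Since $A$ is separable, we may replace $A$ by a separable elementary substructure and assume $A$ is concretely represented on a separable Hilbert space.  If $A$ is subhomogeneous then it is nuclear and the result is classical, so we may assume $A$ is not subhomogeneous.  By the Lemma preceding the proof of Corollary \ref{subhom}, there is a u.c.p.\ embedding $\iota:\prod_\u M_n\hookrightarrow A^\u$ admitting a conditional expectation $\rho:A^\u\to \prod_\u M_n$ with $\rho\circ \iota=\id$.

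Consider the u.c.p.\ map $\rho\circ \phi:A\to \prod_\u M_n$.  By the local matrix ultraproduct lifting property, there is a u.c.p.\ lift $\widetilde{\psi}:E\to \prod_\u M_n$ of $(\rho\circ \phi)|_E$.  Writing $\widetilde{\psi}=(\psi_k)^\bullet$ with each $\psi_k:E\to M_k$ u.c.p., and composing with the representing sequences for $\iota$, we obtain u.c.p.\ maps $E\to A$ whose ultraproduct lifts $(\iota\circ\rho\circ\phi)|_E$.  It remains to pass from $\iota\circ\rho\circ\phi$ back to $\phi$ on $E$: using the $\aleph_1$-saturation of $A^\u$ together with the small perturbation argument, the discrepancy $\|(\iota\circ\rho\circ\phi - \phi)|_E\|$ can be absorbed, since $\rho\circ\iota=\id$ forces $\iota\circ\rho$ to act as the identity on the separable elementary submodel containing $\phi(E)$.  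This yields a u.c.p.\ lift of $\phi|_E$, establishing CP-stability and hence the LLP.
\end{proof}
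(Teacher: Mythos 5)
Your forward direction is fine (and is exactly what the paper leaves implicit). The converse, however, breaks at the final step. You lift $(\rho\circ\phi)|_E$ and then assert that the discrepancy between $\iota\circ\rho\circ\phi$ and $\phi$ on $E$ can be absorbed because ``$\rho\circ\iota=\id$ forces $\iota\circ\rho$ to act as the identity on the separable elementary submodel containing $\phi(E)$.'' This is false: $\rho\circ\iota=\id$ only says that $\iota\circ\rho$ is an idempotent u.c.p.\ map, namely the conditional expectation of $A^\u$ onto the range of $\iota$, i.e., onto the embedded copy of $\prod_\u M_n$. It is the identity only on that range, and there is no reason for $\phi(E)$ to lie in, or even near, that range; a conditional expectation onto a proper operator subsystem certainly does not fix an elementary submodel of the ambient algebra pointwise. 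So lifting $(\iota\circ\rho\circ\phi)|_E$ gives no control over $\phi|_E$, and the reduction collapses. There is also a secondary gap: CP-stability (equivalently the LULP) quantifies over unital maps into \emph{arbitrary} C$^*$-algebras (resp.\ arbitrary ultrapowers), whereas you only treat u.c.p.\ maps $A\to A^\u$; even if the compression argument worked, it would not establish CP-stability.

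The paper's proof avoids this by performing the reduction to matrix algebras on the \emph{target} side of the lifting problem rather than by compressing $A^\u$ onto a copy of $\prod_\u M_n$. Given a unital map $\phi:E'\to B$ with $\|\phi\|_k<1+1/k$, one lets $F$ be the finite-dimensional operator system generated by $\phi(E')$ in $B$ and uses the fact that $F$ admits a complete order embedding into $\prod_\u M_n$, witnessed by unital maps $\psi_q:F\to M_{n_q}$ with $\|\psi_q\|_q,\|\psi_q^{-1}\|_q<1+1/q$. The hypothesis is then applied --- via a set $I_k(E')\in\u$ of good matrix sizes extracted as in \cite[Proposition 2.42]{GS} --- to the maps $\psi_q\circ\phi$, producing u.c.p.\ maps $\tau_q:E\to M_{n_q}$ close to $\psi_q\circ\phi|_E$; these are pulled back through $\psi_q^{-1}$, a cluster point is taken, and a small perturbation (\cite[Corollary B.11]{BO}) yields the desired u.c.p.\ map into $B$. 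The idea your proposal is missing is this universal embeddability of the finite-dimensional target system into $\prod_\u M_n$, which is what lets a lifting hypothesis stated only for matrix ultraproduct targets govern arbitrary targets.
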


\begin{proof}
Suppose that $A$ has the local matrix ultraproduct lifting property.  It suffices to show that $A$ is CP-stable.  Towards that end, fix a finite-dimensional operator system $E\subseteq A$ and $\epsilon>0$; we need to find $k\in \n$ and a finite-dimensional operator system $E\subset E'\subset A$ so that if $\phi:E'\to B$ is a unital map into a unital C$^*$-algebra with $\|\phi\|_k<1+1/k$, then there is a u.c.p.\ map $\phi':E\to B$ such that $\|\phi |_E-\phi'\|<\epsilon$.  Fix $\delta>0$ sufficiently small. For a finite-dimensional operator system $A\subset E\subset E'$, let $I_k(E')$ denote the set of $n$ for which, whenever $\tau:E'\to M_n$ is a unital map with $\|\tau\|_k<1+1/k$, then there is a u.c.p.\ map $\tau':E\to M_n$ with $\|\tau|_E-\tau'\|<\delta$.  We leave it to the reader to check (along the same lines as \cite[Proposition 2.42]{GS}) that the fact that $A$ has the local matrix ultraproduct lifting property implies that there are $k$ and $E'$ for which $I_k(E')\in \u$.  We claim that these $k$ and $E'$ are as desired.  

Let $\phi:E'\to B$ be a unital map into a unital C$^*$-algebra with $\|\phi\|_k<1+1/k$.  Let $F\subseteq B$ be the operator system generated by $\phi(E')$.  Since $F$ admits a complete order embedding into $\prod_\u M_n$, we obtain unital maps $\psi_q:F\to M_{n_q}$ such that $\|\psi_q\|_q,\|\psi_q^{-1}\|_q<1+1/q$.   For $q\in I_k(E')$, we can find u.c.p.\ maps $\tau_q:E\to M_{n_q}$ which are $\delta$ close to $\psi_q\circ \phi |_E$.  By the small perturbation argument, for these $q$, we can find unital, self-adjoint maps $\tau_q':E\to \psi_q(F)$ with $\|\tau_q'\|_{\cb}\leq 1+N\delta$ for some $N\in \n$ depending only on $E$.  Taking a cluster point of the maps $\psi_q^{-1}\circ \tau_q':E\to F$, we obtain a unital, self-adjoint map $\theta:E\to F$ which is $N^2\delta$ close to $\phi$ with $\|\theta'\|_{\cb}\leq 1+N\delta$.  Finally, by \cite[Corollary B.11]{BO}, we can find u.c.p.\ $\phi':E\to B$ which is $10N^2\delta$ close to $\theta$.  It follows that by choosing $\delta$ small enough (depending only on $E$ and $\epsilon$), $\phi'$ is within $\epsilon$ of $\phi$.
\end{proof}

\begin{nrmk}
The Robertson and Smith result alluded to above shows that every C$^*$-algebra admits a complete order embedding into an ultrapower of C$^*(\mathbb{F}_\infty)$.  As a result, this shows that C$^*(\mathbb{F}_\infty)$ is not CP-rigid.  Indeed, if C$^*(\mathbb{F}_\infty)$ were CP-rigid, then every C$^*$-algebra $A$ would be operator space finitely representable in C$^*(\mathbb{F}_\infty)$ (that is, for any finite-dimensional operator subspace $E$ of $A$ and any $\epsilon>0$, there is a finite-dimensional operator subspace $\hat{E}$ of C$^*(\mathbb{F}_\infty)$ with $d_{\cb}(E,\hat{E})<\epsilon$), which for C$^*$-algebras with WEP is shown in \cite{jungepisier} to be equivalent to LLP.  Thus, if C$^*(\mathbb{F}_\infty)$ were CP-rigid, then WEP would imply LLP, contradicting the main result of \cite{jungepisier}.
\end{nrmk}

Returning to the logical status of LLP, we have yet to determine whether or not LLP is uniformly definable by a sequence of types.  We end this paper with motivation for settling this question.  

Let the statement ``LLP is omitting types'' be an abbreviation for the statement that the collection of C$^*$-algebras with LLP be uniformly definable by a sequence of \emph{universal} types in the language of C$^*$-algebras; here, the requirement of the types being universal signifies that the formulae $\phi_{m,j}$ are existential.  (This reversal makes sense, for asking that a collection of existential formulae be bounded below is a universal statement.)  Since LLP is preserved under existentially closed substructures, it is reasonable to expect that the types defining LLP (should they exist) are universal.

Recall that KEP stands for the statement that every C$^*$-algebra embeds into an ultrapower of $\O_2$.  Say that a C$^*$-algebra $A$ is \emph{locally universal} if every C$^*$-algebra embeds into an ultrapower of $A$.  By abstract nonsense (see \cite{FHS}), locally universal C$^*$-algebras exist.  Using this terminology (and the fact that every separable nuclear C$^*$-algebra embeds into $\O_2$) we see that KEP is equivalent to the existence of a locally universal nuclear C$^*$-algebra.  It thus makes sense to consider the LLPEP, which is the statement that there exists a locally universal C$^*$-algebra with LLP.  (The corresponding statement for WEP is automatically true for one can just embed a locally universal C$^*$-algebra into a C$^*$-algebra with WEP.)  Clearly KEP implies LLPEP, but it is unclear whether or not the two statements are equivalent.

One final acronym:  by the \emph{weak QWEP conjecture} we mean the statement that there is a non-nuclear C$^*$-algebra that has both WEP and LLP.  (This conjecture appears as a question at the end of Ozawa's article \cite{qwep}.)  The reason for the name is that Kirchberg's QWEP conjecture states that every C$^*$-algebra is QWEP, which by the results in \cite{kirchberg}, is equivalent to the statement that the LLP implies the WEP (which is itself equivalent to Connes' Embedding Problem having a positive solution).

\begin{prop}
Suppose that LLP is omitting types.  Then either KEP is equivalent to LLPEP or else the weak QWEP conjecture is true.
\end{prop}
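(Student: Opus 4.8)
The plan is to argue by cases on whether LLPEP holds, exploiting the fact noted above that KEP implies LLPEP. If LLPEP fails, then KEP fails as well, so the biconditional ``KEP is equivalent to LLPEP'' holds vacuously and there is nothing to prove. Thus the substance lies in the case where LLPEP holds. Here I would fix a unital, locally universal C$^*$-algebra $D$ with LLP, and first arrange that $D$ may be taken separable: since LLP is (by hypothesis) omitting a uniform sequence of types, any separable elementary substructure of $D$ again omits those types and hence has LLP, while local universality also passes to separable elementary substructures because embeddability of a fixed algebra into an ultrapower depends only on the existential theory. So I may assume $D$ is separable, unital, locally universal, and LLP.

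Next I would let $B$ be a separable, unital, \emph{existentially closed} C$^*$-algebra containing $D$, produced by the usual chain construction. Three features of $B$ are relevant. First, $B$ is locally universal, since it contains the locally universal $D$ (any C$^*$-algebra embeds into $D^\u\subseteq B^\u$). Second, $B$ has the WEP: being existentially closed, $B$ is in particular semi-p.e.c.\ as an operator system (every positive existential operator-system formula is an existential formula in the language of C$^*$-algebras), so Proposition \ref{wep-pec} applies. Third---and this is the crucial point---$B$ has LLP.

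For the third feature I would isolate two lemmas. The first is that LLP is preserved under ultrapowers: because the defining formulae $\phi_{m,j}$ share, for fixed $m$, a single modulus of uniform continuity (condition (2) of the introduction), the predicate $\inf_j\phi_{m,j}$ is a genuine definable predicate whose value in an ultrapower is the corresponding ultralimit, whence $(\sup_{\vec x}\inf_j\phi_{m,j})^{D^\u}=\lim_\u(\sup_{\vec x}\inf_j\phi_{m,j})^{D}=0$, so $D^\u$ has LLP. The second lemma is that an existentially closed C$^*$-algebra that embeds into some LLP algebra is itself LLP, and \emph{here the universality of the types enters decisively}: since each $\phi_{m,j}$ is existential and $B$ is existentially closed, $\phi_{m,j}^B(\vec a)=\phi_{m,j}^C(\vec a)$ for every extension $C\supseteq B$ and every tuple $\vec a$ from $B$, so taking $C$ to be any LLP algebra forces $\inf_j\phi_{m,j}^B(\vec a)=\inf_j\phi_{m,j}^C(\vec a)=0$. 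To apply this to $B$ I would use that $D$ is locally universal, so $B$ embeds into some ultrapower $D^\v$, which is LLP by the first lemma; hence $B$ has LLP.

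Finally, $B$ is a locally universal C$^*$-algebra with both WEP and LLP, and I would conclude by a dichotomy on nuclearity. If $B$ is nuclear, then $B$ is a locally universal nuclear C$^*$-algebra, so KEP holds; since we are in the case where LLPEP holds, the biconditional ``KEP is equivalent to LLPEP'' is true. If $B$ is not nuclear, then $B$ is a non-nuclear C$^*$-algebra with WEP and LLP, which is exactly the weak QWEP conjecture. In either case the stated disjunction holds. The main obstacle I anticipate is the careful verification of the two lemmas underlying the LLP of $B$---in particular, pinning down the ultrapower-stability of ``omitting a uniform sequence of types'' via condition (2), and confirming that the existentially-closed transfer genuinely uses only that each $\phi_{m,j}$ is existential; the remaining steps are routine given the results cited above.
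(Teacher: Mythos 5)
There is a genuine gap, and it sits exactly where you flagged the ``crucial point'': your first lemma, that $D^\u$ has LLP whenever $D$ does, is false. Condition (2) makes each $\inf_j\phi_{m,j}$ uniformly continuous with a modulus independent of the structure, but it does \emph{not} make it a definable predicate, and its value does not commute with ultraproducts: since $\phi_{m,j}^{D^\u}(\vec a)=\lim_\u\phi_{m,j}^{D}(\vec a_i)$ for each fixed $j$, one only gets $\lim_\u\inf_j\phi_{m,j}^{D}(\vec a_i)\leq\inf_j\phi_{m,j}^{D^\u}(\vec a)$, which is the wrong direction. (If omitting a uniform sequence of types passed to ultrapowers and elementary substructures, the property would be axiomatizable, defeating the whole point of the weaker notion.) Concretely, the paper itself supplies counterexamples: by the remark following Corollary \ref{subhom}, for a QWEP algebra $A$ the ultrapower $A^\u$ has LLP if and only if $A$ is subhomogeneous; so any nuclear, non-subhomogeneous $A$ (a UHF algebra, say, or $\O_2$) has LLP while $A^\u$ does not. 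Your second lemma --- that an existentially closed algebra embedding into \emph{some} LLP algebra inherits LLP because the $\phi_{m,j}$ are existential --- is correct and is precisely where the universality of the types should enter; but you have no LLP algebra to embed $B$ into, since $D^\v$ is not one.

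The paper avoids this by not trying to locate the existentially closed algebra inside an LLP algebra at all. It runs the Omitting Types Theorem (model-theoretic forcing as in \cite{GS}): LLPEP guarantees that every satisfiable condition has witnesses coming from an algebra with LLP, and the hypothesis that the types are \emph{universal} (so the $\phi_{m,j}$ are existential) is exactly what allows the forcing construction to build an existentially closed $A$ that omits each type $\{\phi_{m,j}\geq 1/n\}$ step by step --- i.e., an existentially closed $A$ with LLP. From that point your argument and the paper's coincide: $A$ is locally universal and has WEP (existentially closed algebras are semi-p.e.c.\ as operator systems, so Proposition \ref{wep-pec} applies), and the dichotomy on nuclearity gives either KEP or a non-nuclear algebra with WEP and LLP. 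Your vacuous case when LLPEP fails, and your reduction to a separable locally universal LLP algebra, are fine; the proof cannot be completed along your lines, however, without replacing the ultrapower step by the forcing argument.
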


\begin{proof}
Using the machinery of model-theoretic forcing as in \cite{GS}, the LLPEP implies that there exists an existentially closed C$^*$-algebra $A$ with LLP.  Either $A$ is nuclear and KEP holds (see \cite{GS}) or else $A$ is non-nuclear.  Since existentially closed C$^*$-algebras have WEP, the latter alternative witnesses the truth of the weak QWEP conjecture.
\end{proof}

\appendix

\section{The Polish space of codes for structures}\label{codes}

In this appendix, we review the notion of codes for structures.  This topic is discussed \cite[Section 1]{BNT} for relational languages.  We present the topic here for the sake of completeness and also for arbitrary languages.

Fix a continuous, separable language $\la$ (not necessarily relational!).  For the sake of exposition, we assume that $\la$ is 1-sorted, although we will eventually apply this discussion to the language of operator spaces or the language of operator systems, both of which are many sorted.  

Fix a countable  set $(\varphi_i)_{i\in \n}$ of atomic $\la$-formulae that is dense in the set of all atomic formulae in any finite number of variables.  Set $n_i$ to be the arity of $\varphi_i$.  We will think of $P\in \prod_i \mathbb{R}^{\n^{n_i}}$ as potentially coding an $\la$-pre-structure $\mathcal{M}$ with $\n$ as a distinguished countable dense set according to the rule $\varphi_i(\vec n)^{\mathcal M}:=P(i)(\vec n)$.  Of course, not all elements of $\prod_i \r^{\n_{n_i}}$ represents such codes.  First, we note that we can read off $d(m,n)$ for any $m,n\in \n$ by setting $d(m,n):=\lim_{j\to \infty} \varphi_{i_j}(m,n)$, where $\varphi_{i_j}(x,y)$ converges uniformly to $d(x,y)$.  We will say that $P$ \emph{codes a pseudometric space} if the induced function $d:\n^2\to \r$ is a pseudometric.    We will say that $P$ \emph{codes an $\la$-prestructure} if $P$ codes a pseudometric space and $P$ respects the modulus of uniform continuity for each $\varphi_i$, that is, for any tuples $\vec m,\vec n$ of the appropriate length such that $d(\vec m,\vec n)<\Delta_{\varphi_i}(\epsilon)$, we have $|P(i)(\vec m)-P(i)(\vec n)|\leq \e$.  We let $\frak{M}$ denote the set of codes of $\la$-prestructures.  It is quite easy to see that $\frak{M}$ is a $G_\de$ subset of $\prod_i \r^{\n^{n_i}}$, whence is Polish.  We refer to this Polish topology on $\frak{M}$ as the \emph{logic topology} on the space of codes.

Given $P\in \frak{M}$, we can construct an $\la$-structure $\mathcal{M}_P$ as follows.  First, one lets $Y_P$ denote the so-called \emph{term algebra} on $\n$, that is, all expressions one obtains from $\n$ by successive applications of the function symbols.  Note that the pseudometric $d_P$ extends naturally to $Y_P$ as $d(t_1,t_2)$ can be read off from the dense sequence of formulae. Moreover, $Y_P$ is still an $\la$-prestructure.  One can then separate and complete $Y_P$ to obtain an $\la$-structure $\mathcal{M}_P$; see \cite[Section 3]{bbhu}  


We will need the following well-known fact (which is also straightforward to verify from the definitions).

\begin{lemma}\label{closed}
Suppose that $T$ is a universal theory.  Let $\frak{M}_T$ denote the elements of $\frak{M}$ that code models of $T$.  Then $\frak{M}_T$ is closed in the logic topology.
\end{lemma}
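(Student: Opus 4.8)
The plan is to show that membership in $\frak{M}_T$ is a closed condition by expressing its complement as a union of basic open sets in the logic topology. Recall that a universal theory $T$ consists of sentences of the form $\sup_{\vec x}\varphi(\vec x)\leq 0$ (equivalently, conditions bounding quantifier-free formulae), so a code $P$ fails to code a model of $T$ exactly when some such universal axiom is violated, which means some quantifier-free formula takes a value exceeding the prescribed bound at some tuple from the dense set $\n$.

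First I would fix a universal axiom of $T$, say of the form $\sup_{\vec x}\psi(\vec x)\leq 0$ with $\psi$ quantifier-free, and recall the key density feature of the construction: the value $\psi(\vec a)^{\mathcal{M}_P}$ for $\vec a$ a tuple from $\n$ can be read off from $P$ continuously, since $\psi$ is a uniformly continuous combination of atomic formulae and each atomic formula's value at elements of $\n$ is given directly by the coordinates $P(i)(\vec n)$. Because $T$ is universal and $\n$ is dense in $\mathcal{M}_P$, the structure $\mathcal{M}_P$ models $T$ if and only if $\psi(\vec a)^{\mathcal{M}_P}\leq 0$ for every such axiom $\psi$ and every tuple $\vec a$ from $\n$; here one uses that the supremum over the whole structure agrees with the supremum over the dense term algebra (and hence over $\n$), which follows from uniform continuity of $\psi$.

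Next I would write the complement explicitly. Let $(\psi_s)_s$ enumerate (a dense set of) the quantifier-free formulae whose universal closures axiomatize $T$. Then
\[
\frak{M}\setminus \frak{M}_T = \bigcup_{s}\ \bigcup_{\vec a\in \n^{<\n}}\ \bigcup_{q\in \n}\ \{P\in \frak{M}\ :\ \psi_s(\vec a)^{\mathcal{M}_P}> \tfrac{1}{q}\}.
\]
Each set $\{P : \psi_s(\vec a)^{\mathcal{M}_P}>\tfrac{1}{q}\}$ is open, because the map $P\mapsto \psi_s(\vec a)^{\mathcal{M}_P}$ is continuous on $\frak{M}$ in the logic topology: it is obtained by composing the continuous connectives defining $\psi_s$ with finitely many coordinate evaluations $P\mapsto P(i)(\vec a)$, together with a uniform limit over the dense family of atomic formulae approximating the atomic subformulae of $\psi_s$. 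A countable union of open sets is open, so $\frak{M}\setminus \frak{M}_T$ is open and $\frak{M}_T$ is closed.

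The main obstacle, and the step deserving the most care, is justifying that the supremum defining a universal axiom can be checked on the dense set $\n$ rather than on all of $\mathcal{M}_P$—that is, that $\mathcal{M}_P\models \sup_{\vec x}\psi\leq 0$ iff $\psi(\vec a)\leq 0$ for all $\vec a\in\n^{|\vec x|}$. This is where the uniform continuity condition built into the definition of $\frak{M}$ does the work: since $P$ respects the modulus of uniform continuity of each atomic formula, $\psi$ has a modulus of uniform continuity on $\mathcal{M}_P$, so its supremum over the completion equals its supremum over the dense subset. One must also confirm that the evaluation map is genuinely continuous despite $\psi$ being only approximated by the dense family $(\varphi_i)$; this is handled by noting the approximation is uniform, so the limit of continuous functions is continuous. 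Everything else is routine bookkeeping over the countable index sets.
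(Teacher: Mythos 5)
The paper never actually writes out a proof of Lemma \ref{closed}: it is introduced as a ``well-known fact\ldots straightforward to verify from the definitions.'' Your argument is exactly the verification the authors intend, and it is correct in substance: a universal theory is axiomatized by conditions $\sup_{\vec x}\psi(\vec x)\leq 0$ with $\psi$ quantifier-free; the map $P\mapsto \psi(\vec a)^{\mathcal{M}_P}$ is continuous on $\frak{M}$ (finitely many coordinate evaluations composed with the continuous connectives, together with a uniform limit over the dense family of atomic formulae, the uniformity being in the sup-norm over all structures and hence in $P$); and the complement of $\frak{M}_T$ in $\frak{M}$ is then a countable union of open sets.

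One point deserves tightening. The countable dense subset of $\mathcal{M}_P$ is not $\n$ itself but the term algebra $Y_P$ --- all expressions obtained from $\n$ by applying the function symbols --- so the supremum in a universal axiom must be checked over tuples of \emph{terms}, not merely over tuples from $\n$; your parenthetical ``(and hence over $\n$)'' elides this. The repair costs nothing: substituting terms with parameters from $\n$ into the quantifier-free formula $\psi$ yields another quantifier-free formula evaluated at a tuple from $\n$, whose value is again a continuous function of $P$, and the enlarged index set (axioms, terms, rational thresholds $1/q$) remains countable, so your decomposition of $\frak{M}\setminus\frak{M}_T$ as a countable union of basic open sets goes through verbatim. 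Note that for the intended application to the languages of operator spaces and operator systems, where the function symbols include the linear operations, the adjoint, and the matrix amplifications, the distinction between $\n$ and $Y_P$ is genuinely present, so this is worth saying explicitly.
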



\section{Definable predicates}\label{defpred}

In this appendix, we review the necessary background on definable predicates.  For more explanation and proofs, see \cite[Section 9]{bbhu} and \cite{FHRTW}.  

Let $T$ be an $\la$-theory.  Let $\mathcal{F}_n$ denote the set of $\la$-formulae in $n$ free variables.  There is a natural seminorm $\|\cdot\|_T$ on $\mathcal{F}_n$ given by $\|\varphi\|_T:=\sup\{\varphi^{\mathcal M}(\vec a) \ : \ \mathcal{M}\models T, \vec a\in M^n\}$.  One can separate and complete $(\mathcal{F}_n,\|\cdot\|_T)$; the elements of this completion are called \emph{$n$-ary definable predicates} for $T$ and the set of $n$-ary definable predicates for $T$ forms a Banach algebra.  Given any $n$-ary definable predicate $P$, there are $\la$-formulae $(\varphi_m)$ from $\mathcal{F}_n$ such that $\|P-\varphi_m\|_T\leq \frac{1}{m}$.  This allows us to define, for any $\mathcal{M}\models T$, a uniformly continuous function $P^{\mathcal M}:M^n\to \r$ by $P^{\mathcal M}(\vec a):=\lim_{m\to \infty} \varphi^{\mathcal M}_m(\vec a)$.   

Conversely, suppose that there is an association $\mathcal{M}\mapsto P^{\mathcal{M}}$ from models $\mathcal{M}$ of $T$ to uniformly continuous functions $P^{\mathcal{M}}:M^n\to \r$ where the modulus of uniform continuity is the same for all $P^{\mathcal M}$'s.  Then Beth's definability theorem implies that $P$ is a definable predicate for $T$ if and only if the class of structures $(\mathcal{M},P^{\mathcal{M}})$ is an axiomatizable class.  Moreover, in order to check that this latter property holds, one needs to check that, for any  models $\mathcal{M}_i$ of $T$ with ultraproduct $\mathcal{M}:=\prod_\u M_i$, we have $P^{\mathcal M}=\lim_\u P^{\mathcal{M}_i}$.

The following lemma shows us that we can use definable predicates when showing that a property is uniformly definable by a sequence of types; the distinction between formulae and definable predicates is safely ignored in \cite{FHRTW}, but we include a proof here for the sake of completeness.

\begin{lemma}
If we use definable predicates rather than formulae in the definition of uniformly definable by a sequence of types, we do not get a different notion.
\end{lemma}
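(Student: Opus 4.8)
The plan is to show that the two notions of ``uniformly definable by a sequence of types''---one using formulae $\phi_{m,j}$, the other allowing definable predicates---coincide. Since every formula is in particular a definable predicate, one direction is immediate: if a property is uniformly definable by a sequence of genuine formulae, then it is certainly uniformly definable by a sequence of definable predicates. So the content is the reverse implication, and the key tool will be the fact recalled in Appendix \ref{defpred} that a definable predicate $P$ is, by construction, a uniform limit of formulae: for each $n$ there is a formula $\varphi$ with $\|P-\varphi\|_T \leq \frac{1}{n}$.

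First I would suppose that the property is witnessed by a family of definable predicates $P_{m,j}(\vec x_m)$ satisfying the three normalization conditions: each $P_{m,j}$ takes nonnegative values, for fixed $m$ they share a common modulus of uniform continuity, and a structure has the property iff $\left(\sup_{\vec x_m}\inf_j P_{m,j}\right)=0$ for every $m$. The idea is then to replace each $P_{m,j}$ by a nearby genuine formula. Using the approximation just mentioned, for each pair $(m,j)$ I would choose a formula $\varphi_{m,j}$ with $\|P_{m,j}-\varphi_{m,j}\|_T$ very small---specifically I would arrange the error to tend to $0$ as $j\to\infty$ for each fixed $m$, say $\|P_{m,j}-\varphi_{m,j}\|_T \leq \frac{1}{j}$. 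Then I would apply the normalizing operations: replace $\varphi_{m,j}$ by $\max(\varphi_{m,j},0)$ (or $|\varphi_{m,j}|$, using that $P_{m,j}\geq 0$) to guarantee nonnegativity, which costs at most the same approximation error. The main technical point is to also preserve condition (2), the common modulus of continuity for fixed $m$; here I would use that the formulae can be chosen, after truncation/composition with a fixed connective, to respect a uniform modulus, and that this does not disturb the nonnegativity or the smallness of the error.

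It then remains to verify condition (3) for the new formulae: that a structure $\mathcal{M}\models T$ has the property iff $\sup_{\vec x_m}\inf_j \varphi_{m,j}^{\mathcal M}=0$ for every $m$. Because $\varphi_{m,j}$ is within $\frac{1}{j}$ of $P_{m,j}$ uniformly over all models, the infimum $\inf_j \varphi_{m,j}^{\mathcal M}(\vec a)$ agrees with $\inf_j P_{m,j}^{\mathcal M}(\vec a)$: any approximate infimizer of one sequence gives an approximate infimizer of the other, since the discrepancy $\frac{1}{j}\to 0$ precisely along the tail where the infima are approached. Passing to the supremum over $\vec x_m$ then shows $\sup_{\vec x_m}\inf_j \varphi_{m,j}^{\mathcal M}=\sup_{\vec x_m}\inf_j P_{m,j}^{\mathcal M}$, so the two are simultaneously zero; hence the sequence of formulae defines exactly the same class.

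The step I expect to be the main obstacle is the simultaneous control of all three normalization conditions under approximation---in particular, ensuring that after replacing each definable predicate by a formula and then truncating to enforce nonnegativity, the resulting formulae still satisfy the uniform-modulus-of-continuity requirement (2) for each fixed $m$. This is the condition that is ``crucial in the proof of the Omitting Types Theorem'' as noted in the introduction, so it must be preserved carefully; the resolution is that the modulus of a formula is inherited by its approximants up to the fixed composition with $\max(\cdot,0)$, which is $1$-Lipschitz and hence does not worsen the modulus. Everything else is the routine bookkeeping of exchanging a uniform limit with the $\sup\inf$ operations, which is harmless precisely because the approximation is uniform over all models of $T$.
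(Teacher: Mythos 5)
The easy direction (a formula is already a definable predicate) is fine, and replacing each definable predicate by a nearby formula is the right tool, but the way you tie the approximation error to the index $j$ breaks the argument. You choose $\varphi_{m,j}$ with $\|P_{m,j}-\varphi_{m,j}\|_T\leq\frac{1}{j}$ and then claim $\inf_j\varphi_{m,j}^{\mathcal M}(\vec a)=\inf_j P_{m,j}^{\mathcal M}(\vec a)$ because ``the discrepancy $\frac{1}{j}\to 0$ precisely along the tail where the infima are approached.'' But the infimum over $j$ of the sequence $(P_{m,j}^{\mathcal M}(\vec a))_j$ need not be approached along the tail: it can be attained at $j=1$ and nowhere else. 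Concretely, if $P_{m,1}^{\mathcal M}(\vec a)=0$ while $P_{m,j}^{\mathcal M}(\vec a)=1$ for all $j\geq 2$, your hypotheses permit $\varphi_{m,1}^{\mathcal M}(\vec a)=1$ (the error at $j=1$ is only required to be $\leq 1$) and $\varphi_{m,j}^{\mathcal M}(\vec a)=1$ for $j\geq 2$, so $\inf_j\varphi_{m,j}^{\mathcal M}(\vec a)=1$ although $\inf_j P_{m,j}^{\mathcal M}(\vec a)=0$. A structure with the property would then fail to omit your types, so the defined class genuinely changes. (The truncation $\max(\varphi_{m,j},0)$ has a related one-sided defect: it only yields $\max(\varphi_{m,j},0)\leq P_{m,j}+\frac{1}{j}$, leaving a residual error in the sup-inf that is never quantified away.)

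The repair is to decouple the approximation quality from the running index of the type. For each $n$ choose formulae $\varphi^n_{m,j}$ with $\|P_{m,j}-\varphi^n_{m,j}\|_T<\frac{1}{n}$ \emph{uniformly in $j$}, and pass to $\varphi^n_{m,j}\dotminus\frac{1}{n}$, which is nonnegative and satisfies $P_{m,j}-\frac{2}{n}\leq \varphi^n_{m,j}\dotminus\frac{1}{n}\leq P_{m,j}$ in every model of $T$. Then $\sup_{\vec x_m}\inf_j P_{m,j}=0$ holds if and only if $\sup_{\vec x_m}\inf_j(\varphi^n_{m,j}\dotminus\frac{1}{n})=0$ for every $n$, so the property is uniformly definable by the sequence of types now indexed by the pairs $(m,n)$. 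This is exactly what the paper does. Your concern about preserving the common modulus of uniform continuity is also resolved by this re-indexing: one only needs a common modulus for each fixed pair $(m,n)$, not for each fixed $m$ across all approximation qualities at once.
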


\begin{proof}
Suppose that, for each $m,j$, $P_{m,j}(\vec x_m)$ is a nonnegative definable predicate and $(\varphi_{m,j}^n)_n$ are nonnegative formulae such that $\|P_{m,j}-\varphi_{m,j}^n\|_\infty<\frac{1}{n}$.  Then the following two requirements are equivalent:
\begin{itemize}
\item $\sup_{\vec x_m} \inf_j P_{m,j}(\vec x_m)=0$ for all $m$;
\item $\sup_{\vec x_m} \inf_j (\varphi_{m,j}^n(\vec x_m)\dotminus \frac{1}{n})=0$ for all $m$ and $n$.  
\end{itemize}
Note also that the modulus of uniform continuity for $\varphi_{m,j}^n\dotminus \frac{1}{n}$ depends only on $m$ and $n$.
\end{proof}


\end{document}